\newtheorem{theorem}{Theorem}[section]
\newtheorem{lemma}[theorem]{Lemma}
\newtheorem{definition}[theorem]{Definition}
\newtheorem{corollary}[theorem]{Corollary}
\theoremstyle{remark}
\newtheorem*{remark}{Remark}
\numberwithin{equation}{section}
\newcommand{\R}{\mathbb{R}}
\newcommand{\C}{\mathbb{C}}
\newcommand{\N}{\mathbb{N}}
\newcommand{\E}[1]{\mathbb{E}\left[#1\right]}
\newcommand{\ET}[1]{\mathbb{E}_{\Theta}\left[#1\right]}
\newcommand{\PT}[1]{\mathbb{P}_{\Theta}\left[#1\right]}
\newcommand{\ETA}[2]{\mathbb{E}_{\Theta}^{(#1)}\left[#2\right]}
\newcommand{\PTA}[2]{\mathbb{P}_{\Theta}^{(#1)}\left[#2\right]}
\newcommand{\pta}{\mathbb{P}_{\Theta}^{(A_n)}}
\newcommand{\slan}{\sum_{\la \vdash n}\frac{1}{z_\la}}
\newcommand{\sla}{\sum_{\la}\frac{1}{z_\la}}
\newcommand{\la}{\lambda}
\newcommand{\eps}{\epsilon}
\newcommand{\nth}[1]{[t^n]\left[ #1 \right]}
\newcommand{\set}[1]{\left\{#1\right\}}
\DeclareMathOperator{\one}{\mathds{1}}
\renewcommand{\Re}{\mathrm{Re}}
\renewcommand{\Im}{\mathrm{Im}}
\newcommand{\Sn}{\mathfrak{S}_n}
\begin{document}

\title[Functional central limit for the weighted probability measure]{Large cycles and a functional central limit theorem for generalized weighted random permutations.}
\date{\today}

%
%
%
%
%
\author[A. Nikeghbali]{Ashkan Nikeghbali}
\address{Institut f\"ur Mathematik\\ Universit\"at Z\"urich\\ Winterthurerstrasse 190\\ 8057-Z\"urich,
Switzerland} \email{ashkan.nikeghbali@math.uzh.ch}

\author[J. Storm]{Julia Storm}
\address{Institut f\"ur Mathematik\\ Universit\"at Z\"urich\\ Winterthurerstrasse 190\\ 8057-Z\"urich,
Switzerland} \email{julia.storm@math.uzh.ch}

\author[D. Zeindler]{Dirk Zeindler}
\address{Department f\"ur Mathematik\\ Sonderforschunsgebreich 701\\Universit\"at Bielefeld\\ Bielefeld, 33501
\\Deutschland} \email{zeindler@math.uni-bielfeld.de}

\begin{abstract}
The objects of our interest are the so-called $A$-permutations, which are permutations whose cycle length lie in a fixed set $A$. They have been extensively studied with respect to the uniform or the Ewens measure. In this paper, we extend some classical results to a more general weighted probability measure which is a natural extension of the Ewens measure and which in particular allows to consider sets $A_n$ depending on the degree $n$ of the permutation. By means of complex analysis arguments and under reasonable conditions on generating functions we study the asymptotic behaviour of classical statistics. More precisely, we generalize results concerning large cycles of random permutations by Vershik, Shmidt and Kingman, namely the weak convergence of the size ordered cycle length to a Poisson-Dirichlet distribution. Furthermore, we apply our tools to the cycle counts and obtain a Brownian motion central limit theorem which extends results by DeLaurentis, Pittel and Hansen.
\end{abstract}

\maketitle

\tableofcontents

\section{Introduction}
\label{sec_intro}

Permutations are classical objects that appear in many mathematics fields. 
A special class of permutations are the so-called $A$-permutations, where $A$ is a non-empty subset of $\N$. 
We call an element $\sigma$ of the symmetric group $\Sn$ an $A$-permutation 
if $\sigma$ can be written as a product of disjoint cycles whose cycle-lengths are all in $A$.
These permutations have been extensively studied over the past thirty  years, 
a long list of references can be found for instance in \cite{Ya07b}.
It is well-known that with respect to the uniform measure the behaviour of $A$-permutations is similar to those of the whole permutation group.
To give a single example, in \cite{Ya07a} it was proved that for $n \rightarrow \infty$ the cycle counts $C_m$ (the number of cycles of length $m$ of $\sigma$) 
converge in distribution for $m\in A$ to independent Poisson distributed random variables $Y_m$ with expectation $1/m$. However, in all previous publications about $A$-permutations, one has only investigated its behaviour under the uniform measure and with $A$ being independent of $n$. Here, we consider the following more general $A_n$-weighted measure.
\begin{definition}
\label{def_A-weighted_probabililty_measure}
Let $A_n\subset \set{1,\dots,n}$ and $\Theta = \left(\theta_m  \right)_{m\geq1}$ be given, with $\theta_m\geq0$ for every $m\geq 1$.
We  define the $A_n$-weighted measure of $\sigma\in \Sn$ as
\begin{align}
  \PTA{A_n}{\sigma}
  :=
  \frac{1}{h_n n!} \prod_{m=1}^{\ell(\la)} \theta_{\la_m} \one_{\set{ \la_m\in A_n}}
  \label{eq_PTA_with_partition}
\end{align}
with $h_n = h_n(A_n)$ a normalization constant with $h_0:=1$,  $\la = (\la_1,\dots,\la_\ell)$ the cycle-type of $\sigma$ and $\ell(\la) = \ell$ the length of $\la$ (see Section \ref{sec21}).
\end{definition}

Define furthermore
\begin{align}\label{eq:def_Dn}
D_n := \set{1,\dots,n} \setminus A_n 
\quad \text{ and } \quad
d_n:= 
\begin{cases}
 \max D_n  & \text{if }D_n\neq \emptyset,\\
 1 & \text{ otherwise }.
\end{cases}
\end{align}
We investigate the behaviour of the measure $\pta$ for $d_n = o(n)$, that is to say the cycle lengths not contained in $A_n$ grow slowly (the precise assumptions on $d_n$ can be found in Theorem~\ref{thm:total_cycles_asymp_with_restriction}).
This assumption is motivated by a model in \cite[Section~6]{KoNi09b} about mod-Poisson 
convergence for an analogue of the Erd\"os-Kac Theorem for polynomials over finite fields.
 
The uniform measure or the Ewens measure on $\Sn$ are special cases of the $A_n$-weighted measure, obtained by choosing $A_n = \N$ and $\theta_m \equiv 1$ or $\theta_m \equiv  \theta$.  
Both are classical probability measures and are well-studied, see for instance \cite{ABT02}.

For $A_n = \N$, one obtains the weighted measure on $\Sn$, which was recently investigated in \cite{BeUeVe11}, \cite{ErUe11}, \cite{MaNiZe11}, \cite{NiZe11} (see also the
extensive background bibliography therein). Our study extends the results in \cite{NiZe11} about the cycle counts and the total cycle number to $\pta$ and is based on similar argumentations as those in \cite{NiZe11}. 
Furthermore, we apply our methods to objects which have so far not been considered for the weighted measure on $\Sn$.
More precisely, in Section~\ref{sec:large_cycles} we show that the size ordered cycle lengths converge in law to a Poisson-Diriclet-distribution. This result agrees with those by Vershik and
Shmidt~\cite{ShVe77} and Kingman~\cite{Ki77}, who studied the same asymptotic behaviour with respect to the Ewens measure.
Furthermore, we consider in Section~\ref{sec:func_limit_theorem} the number of cycles in a permutation with lengths not exceeding $n^x$ 
 and show that this process converges, after proper normalisation, to a standard Brownian motion.
This extends the results by Delaurentis and Pittel \cite{DePi85} (uniform measure) and Hansen \cite{Ha90} (Ewens measure) to
 the weighted measure and the $A_n$-weighted measure on $\Sn$.
A great advantage of our argumentation is that it is much more flexible and one can obtain easily the behaviour under further restrictions, see Sections~\ref{sec:restriction-small} and~\ref{sec:restriction_even_odd}.

It is clear that the asymptotic behaviour of all random variables on the group $\Sn$ with respect to the measure $\mathbb{P}_{\Theta}^{(A_n)}$ strongly depend on the sequence $\Theta = \left(\theta_m  \right)_{m\geq1}$ and 
it is thus necessary to impose appropriate assumptions on this sequence. More precisely, we will argue with generating functions, meaning that assumptions are imposed on the function
\begin{align}
\label{eq:def_g_Theta}
  g(t) = g_\Theta(t)
  &:=
  \sum_{n=1}^\infty \frac{\theta_n}{n} t^n .
\end{align}
The link of $g_{\Theta}(t)$ and the generating series of $(h_n)_{n \geq 1}$ is the starting point of our study; for $A_n = \{1, ..., n\}$ it is given by the well-known relation
\begin{align*}
 \sum_{n=1}^{\infty}{h_n t^n} = \exp(g_{\Theta}(t)).
\end{align*}
For general sets $A$ their relation will be stated in Lemma~\ref{lem:generating_hn_A}. We will choose $g_{\Theta}(t)$ in a way that allows us to apply the method of singularity analysis, see Definition~\ref{def_function_class_F_alpha_r}. We give more details in Section~\ref{sec_sing_analaysis}, but a good description of the method of singularity analysis can be found for instance in the book \cite{FlSe09} by Flajolet and Sedgewick.
%

The paper is organized as follows.  
In Section~\ref{sec_comb_andgen_of_Sn} some well known facts about the symmetric group $\Sn$ are presented and generating functions are introduced. In particular, we will recall the \textit{cycle index theorem}, wich will be used in computations throughout the whole paper.
In Section~\ref{sec_sing_analaysis} we determine the setting of our study, meaning that we properly define the assumptions on the functions $g(t)$ under consideration. With complex analysis arguments we establish our main tool, Theorem~\ref{thm:total_cycles_asymp_with_restriction}, which enables us to investigate the large-$n$ behaviour of coefficients of relevant functions. As a direct consequence, we deduce the asymptotic behavior of the normalization constant $h_n$. In Section~\ref{sec_cycles} we apply our methods to compute the characteristic functions of the cycle counts and of the total cycle number and we deduce a central limit theorem and a even stronger convergence result, namely mod-Poisson convergence. Further we investigate the behaviour of the large cycles and show that their asymptotic behaviour with respect to our general measure is the same as with respect to the Ewens measure. Finally, Section~\ref{sec:func_limit_theorem} is devoted to a functional central limit theorem giving the weak 
convergence of a certain functional of the cycle counts to the Brownian motion.  


\section{Combinatorics of $\Sn$ and generating functions}
\label{sec_comb_andgen_of_Sn}

This section is devoted to some basic facts about the symmetric group $\Sn$, partitions and generating functions.  Further, a useful lemma which identifies averages over $\Sn$ with generating functions is recalled. 
We give only a short overview and refer to \cite{ABT02} and \cite{Mac95} for more details.

\subsection{The symmetric group}\label{sec21}

All probability measures and functions considered in this paper are invariant under conjugation and 
it is well known that the conjugation classes of $\Sn$ can be parametrized with partitions of $n$.
This can be seen as follows: Let $\sigma\in \Sn$ be an arbitrary permutation and write $\sigma = \sigma_1\cdots \sigma_\ell$ with $\sigma_i$ disjoint cycles of length $\la_i$.
Since disjoint cycles commute, we can assume that $\la_1\geq\la_2\geq\cdots \geq\la_\ell$.
We call the partition $\la=(\la_1,\la_2,\cdots,\la_\ell)$ the \emph{cycle-type} of $\sigma$ and $\ell = \ell(\la)$ its \emph{length}. Then two elements $\sigma,\tau\in \Sn$ are conjugate if and only if
$\sigma$ and $\tau$ have the same cycle-type. Further details can be found for instance in \cite{Mac95}.
For $\sigma\in \Sn$ with cycle-type $\la$ we define $C_m$, the number of cycles of size $m$, and $T$, the total cycle number as 
\begin{align}
\label{eq_def_Cm_lambda}
C_m :=  \#\set{i ;\la_i = m} 
\quad \text{ and } \quad
T := T_n := \sum_{m=1}^n C_m.
\end{align}
It will turn out that all expectations of interest have the form $\frac{1}{n!} \sum_{\sigma\in \Sn} u(\sigma)$ for a certain class function $u$.
Since $u$ is constant on conjugacy classes, it is more natural to sum over all conjugacy classes. This is subject of the following lemma.

\begin{lemma}
\label{lem:size_of_conj_classes}
Let  $u: \Sn \to \C$ be a class function. For $C_m = C_m(\la)$ as in \eqref{eq_def_Cm_lambda} and $\mathcal{C}_\la$ the conjugacy class corresponding to the partition $\la$ we have
\begin{align*}
  |\mathcal{C}_\la| = \frac{|\Sn|}{z_\la} 
\quad \text{ with } \quad
z_\la:=\prod_{m=1}^{n} m^{C_m}C_m!
\end{align*}
and
\begin{align*}
  \frac{1}{n!} \sum_{\sigma\in \Sn} u(\sigma)
  =
  \slan u(\mathcal{C}_\la).
\end{align*}
\end{lemma}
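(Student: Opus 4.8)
The plan is to prove the two claims in Lemma~\ref{lem:size_of_conj_classes} in sequence, starting with the orbit--stabilizer count for the conjugacy class $\mathcal{C}_\la$ and then using it to rewrite the average as a sum over partitions. For the first part, recall that $\Sn$ acts on itself by conjugation and that the conjugacy class $\mathcal{C}_\la$ is precisely one orbit of this action, by the parametrization recalled in Section~\ref{sec21}. By the orbit--stabilizer theorem, $|\mathcal{C}_\la| = |\Sn| / |Z(\sigma)|$ where $\sigma \in \mathcal{C}_\la$ is any fixed permutation of cycle-type $\la$ and $Z(\sigma)$ is its centralizer. So the whole content of the first identity is the computation $|Z(\sigma)| = z_\la = \prod_{m=1}^n m^{C_m} C_m!$.

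The key step, and the main (mild) obstacle, is therefore identifying the centralizer. I would argue combinatorially: an element $\tau$ commutes with $\sigma$ if and only if conjugation by $\tau$ permutes the cycles of $\sigma$ among themselves, necessarily sending a cycle of length $m$ to another cycle of length $m$. Writing $\sigma$ with $C_m$ cycles of each length $m$, such a $\tau$ is specified by (i) a permutation of the $C_m$ cycles of length $m$, for each $m$, giving $\prod_m C_m!$ choices, and (ii) for each cycle, a cyclic rotation recording how $\tau$ maps the chosen base point, giving $m$ choices per cycle of length $m$, hence $\prod_m m^{C_m}$ in total. These choices are independent and each yields a distinct $\tau$, so $|Z(\sigma)| = \prod_{m=1}^n m^{C_m} C_m! = z_\la$, which gives $|\mathcal{C}_\la| = |\Sn|/z_\la$ as claimed. (Since the paper cites \cite{Mac95} for background, one could alternatively just invoke this standard fact, but I would include the short argument for completeness.)

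For the second identity, the point is simply that the conjugacy classes $\mathcal{C}_\la$, as $\la$ ranges over partitions of $n$, partition $\Sn$, and that $u$ is constant on each $\mathcal{C}_\la$ with common value $u(\mathcal{C}_\la)$. Hence
\begin{align*}
  \frac{1}{n!} \sum_{\sigma \in \Sn} u(\sigma)
  = \frac{1}{n!} \sum_{\la \vdash n} \sum_{\sigma \in \mathcal{C}_\la} u(\sigma)
  = \frac{1}{n!} \sum_{\la \vdash n} |\mathcal{C}_\la| \, u(\mathcal{C}_\la)
  = \sum_{\la \vdash n} \frac{|\Sn|}{n!\, z_\la} u(\mathcal{C}_\la)
  = \slan u(\mathcal{C}_\la),
\end{align*}
using $|\Sn| = n!$ in the last step and the first part of the lemma for the substitution $|\mathcal{C}_\la| = |\Sn|/z_\la$. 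This completes the proof; the only nonroutine ingredient is the centralizer computation, everything else being bookkeeping.
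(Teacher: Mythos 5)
Your proof is correct. The paper itself gives no proof of this lemma — it is stated as a standard fact with Section~\ref{sec_comb_andgen_of_Sn} deferring to \cite{ABT02} and \cite{Mac95} for background — and your argument (orbit--stabilizer reduction to the centralizer, the combinatorial count $|Z(\sigma)| = \prod_m m^{C_m}C_m!$ via permuting same-length cycles and rotating within each, then summing $u$ over conjugacy classes) is precisely the standard proof one would find in those references.
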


\subsection{Generating functions}

Given a sequence $(a_{n})_{n\in\N}$ of numbers, one can encode important information about this sequence into a formal power series called the generating series.
\begin{definition}
\label{def_gneranting_function}
Let $(a_n)_{n\in\N}$ be a sequence of complex numbers. We then define
the (ordinary) generating function of $(a_n)_{n\in\N}$ as the formal power series
    \begin{align*}
      G(t) = G(a_n,t) = \sum_{n=0}^\infty a_n t^n.
    \end{align*}
%
We define $\nth{G}$ to be the coefficient of $t^n$ of $G(t)$, that is $\nth{G} := a_n$. 
\end{definition}
The reason why generating functions are powerful is the possibility of recognizing them
without knowing the coefficients $a_n$ explicitly. In this case one can try to use tools from analysis to extract information about $a_{n}$, for large $n$, from the generating function. 

The following lemma goes back to Polya and is sometimes called \textit{cycle index theorem}. It links generating functions and averages over $\Sn$.

\begin{lemma}
\label{lem:cycle_index_theorem}
Let $(a_m)_{m\in\N}$ be a sequence of complex numbers. Then
\begin{align}
\label{eq_symm fkt}
\sla \left(\prod_{m=1}^{\ell(\la)} a_{\la_{m}}\right) t^{|\la|}
= \sla \left(\prod_{m=1}^{\infty} (a_m t^m)^{C_m}\right) 
=
\exp\left(\sum_{m=1}^{\infty}\frac{1}{m} a_m t^m\right)
\end{align}
with the same $z_\la$ as in Lemma~\ref{lem:size_of_conj_classes}.
If one of the sums in \eqref{eq_symm fkt} is absolutely convergent then so are the others.
\end{lemma}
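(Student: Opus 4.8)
The plan is to prove the two equalities separately. The first equality is purely bookkeeping: for a partition $\la$ with $\ell(\la)$ parts, grouping the parts by their common value shows that $\prod_{m=1}^{\ell(\la)} a_{\la_m} = \prod_{m=1}^{\infty} a_m^{C_m}$, where $C_m = C_m(\la)$ counts the parts equal to $m$ (almost all factors being $1$ since $C_m = 0$ for $m > |\la|$), and simultaneously $t^{|\la|} = \prod_{m=1}^{\infty} (t^m)^{C_m}$ because $|\la| = \sum_m m\, C_m$. Combining these gives $\left(\prod_{m=1}^{\ell(\la)} a_{\la_m}\right) t^{|\la|} = \prod_{m=1}^{\infty} (a_m t^m)^{C_m}$, which is the first identity term by term.

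For the second equality I would reorganise the sum over all partitions $\la$ as a sum over all sequences $(c_1, c_2, \dots)$ of nonnegative integers with only finitely many nonzero terms: the map $\la \mapsto (C_1(\la), C_2(\la), \dots)$ is a bijection between partitions and such sequences. Under this bijection $z_\la = \prod_{m\geq 1} m^{c_m} c_m!$, so
\begin{align*}
  \sla \prod_{m=1}^{\infty} (a_m t^m)^{C_m}
  &= \sum_{\substack{(c_m)_{m\geq 1} \\ c_m \in \N,\ \text{finitely many} \neq 0}} \prod_{m=1}^{\infty} \frac{(a_m t^m)^{c_m}}{m^{c_m} c_m!}
  = \prod_{m=1}^{\infty} \left( \sum_{c_m = 0}^{\infty} \frac{1}{c_m!}\left(\frac{a_m t^m}{m}\right)^{c_m} \right)
  = \prod_{m=1}^{\infty} \exp\!\left(\frac{a_m t^m}{m}\right),
\end{align*}
and the last product equals $\exp\left(\sum_{m\geq 1} \frac{1}{m} a_m t^m\right)$. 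The interchange of sum and product in the middle step is the standard expansion of an infinite product of series, valid at the level of formal power series in $t$ (for each power $t^n$ only finitely many factors and finitely many terms in each factor contribute), and valid analytically whenever the relevant series converge absolutely.

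The only genuinely delicate point is the last sentence of the statement: the equivalence of absolute convergence among the three expressions. Here I would argue that absolute convergence of any one of them is equivalent to absolute convergence of $\sum_{m\geq 1} \frac{1}{m} |a_m| |t|^m$ together with finiteness of $\sum_\la \frac{1}{z_\la} \prod_m (|a_m|\,|t|^m)^{C_m}$; running the computation above with $a_m$ and $t$ replaced by $|a_m|$ and $|t|$ shows the latter quantity equals $\exp\!\left(\sum_{m\geq 1} \frac{1}{m}|a_m||t|^m\right)$, so all three conditions collapse to the single requirement $\sum_{m\geq 1} \frac{|a_m|}{m}|t|^m < \infty$, and then Fubini/Tonelli justifies every rearrangement above in the convergent regime. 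This is the step to be careful about, but it is routine rather than hard; everything else is the combinatorial identification of partitions with cycle-count vectors together with the factorisation of the exponential.
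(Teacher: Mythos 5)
Your proof is correct and is precisely the direct verification the paper alludes to ("can be directly verified using the definitions of $z_\la$ and the exponential function"): regroup the parts by value, reindex the partition sum by cycle-count vectors, and factor the exponential, with the absolute-convergence claim reduced via Tonelli to $\sum_m |a_m||t|^m/m<\infty$. The paper itself only cites \cite{Mac95} and invokes dominated convergence for the last statement, so you have simply written out the same argument in full detail.
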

\begin{proof}
The proof can be found in \cite{Mac95} or can be directly verified using the definitions of $z_\la$ and the exponential function.
The last statement follows from the dominated convergence theorem.
\end{proof}
The crucial tool for our study, the relation of $g_\Theta(t)$ to the generating function of $h_n(A)$ for $A\subset \N$, can immediately be deduced from the previous lemma.
\begin{lemma}
\label{lem:generating_hn_A}
Let $A\subset \N$ and $\Theta$ be given as in Definition \ref{def_A-weighted_probabililty_measure} and define $D:=\N \setminus A$. We then have as formal power series
\begin{align}
\label{eq:generating_hn_with_restrictions}
\sum_{n=0}^\infty
h_n (A)  t^n
=
\exp \left( g_\Theta(t) - L_D(t) \right),
\end{align}
where $g_\Theta(t)$ is given by \eqref{eq:def_g_Theta} and $L_D(t)$ is the formal power series
\begin{align*}
 L_D(t):= \sum_{m\in D} \frac{\theta_m}{m} t^m. 
\end{align*}
\end{lemma}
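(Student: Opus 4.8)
The plan is to express $h_n(A)$ as a weighted sum over partitions of $n$ and then invoke the cycle index theorem (Lemma~\ref{lem:cycle_index_theorem}) with a carefully chosen coefficient sequence that encodes the restriction $\la_m \in A$.

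First I would observe that, since $\PTA{A_n}{\cdot}$ is a probability measure, summing the defining formula \eqref{eq_PTA_with_partition} over all $\sigma \in \Sn$ gives
\[
  h_n(A) = \frac{1}{n!}\sum_{\sigma\in\Sn} \prod_{m=1}^{\ell(\la)} \theta_{\la_m}\one_{\set{\la_m\in A}},
\]
where $\la$ denotes the cycle-type of $\sigma$; note that for a partition $\la\vdash n$ all parts are at most $n$, so the condition $\la_m\in A$ is the same as $\la_m\in A\cap\set{1,\dots,n}$, which is why it suffices to work with a fixed set $A\subset\N$. The summand is a class function of $\sigma$, so Lemma~\ref{lem:size_of_conj_classes} rewrites this as
\[
  h_n(A) = \sla[\vdash n]\; \frac{1}{z_\la}\prod_{m=1}^{\ell(\la)} \theta_{\la_m}\one_{\set{\la_m\in A}},
\]
and for $n=0$ the empty partition contributes the empty product $1$, consistent with the convention $h_0=1$.

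Next I would apply Lemma~\ref{lem:cycle_index_theorem} with the choice $a_m := \theta_m\one_{\set{m\in A}}$. With this choice $\prod_{m=1}^{\ell(\la)} a_{\la_m} = \prod_{m=1}^{\ell(\la)}\theta_{\la_m}\one_{\set{\la_m\in A}}$, so multiplying the previous display by $t^n$, summing over $n\ge 0$ and collecting terms according to $n=|\la|$ yields, as an identity of formal power series,
\[
  \sum_{n=0}^\infty h_n(A)\,t^n
  = \sla\left(\prod_{m=1}^{\ell(\la)} a_{\la_m}\right)t^{|\la|}
  = \exp\!\left(\sum_{m=1}^\infty \frac{a_m}{m}\,t^m\right)
  = \exp\!\left(\sum_{m\in A}\frac{\theta_m}{m}\,t^m\right).
\]
Finally, splitting $\N = A\sqcup D$ and using \eqref{eq:def_g_Theta} together with the definition of $L_D$ gives $\sum_{m\in A}\frac{\theta_m}{m}t^m = g_\Theta(t) - L_D(t)$, which is exactly \eqref{eq:generating_hn_with_restrictions}.

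I do not expect any genuine obstacle here: the whole argument lives in the ring of formal power series, so the absolute-convergence hypothesis in Lemma~\ref{lem:cycle_index_theorem} is not needed. The only point deserving a line of care is the bookkeeping showing that forbidding cycle lengths outside $A$ corresponds precisely to setting $a_m=0$ for $m\notin A$, and that the indicator $\one_{\set{\la_m\in A_n}}$ of Definition~\ref{def_A-weighted_probabililty_measure}, when restricted to partitions of $n$, genuinely only depends on the fixed set $A\subset\N$ rather than on $n$.
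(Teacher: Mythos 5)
Your proposal is correct and follows essentially the same route as the paper: sum the defining formula over $\Sn$, pass to conjugacy classes via Lemma~\ref{lem:size_of_conj_classes}, and apply the cycle index theorem with $a_m=\theta_m\one_{\set{m\in A}}$ before splitting $\N=A\sqcup D$. The extra remarks on the $n=0$ term, on working purely with formal power series, and on the indicator depending only on $A$ are sound but not needed beyond what the paper already does.
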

\begin{proof}
We combine the definition of $h_n$, Lemma~\ref{lem:size_of_conj_classes} and Lemma~\ref{lem:cycle_index_theorem}. We get
\begin{align*}
 h_n(A)
&= 
\frac{1}{n!}\sum_{\sigma\in \Sn} \prod_{m=1}^{\ell(\la)} \bigl( \theta_{\la_m}  \one_{\set{\la_m \in A}} \bigr)
=
\slan  \prod_{m=1}^{\ell(\la)} \bigl( \theta_{\la_m}  \one_{\set{\la_m \in A}} \bigr).
\end{align*}
It follows that
\begin{align*}
\sum_{n=1}^\infty
h_n (A)  t^n
= 
\exp \left( \sum_{m = 1} ^\infty \frac{\theta_m}{m}  \one_{\set{m \in A}} t^m \right) 
=
\exp\left( g_\Theta(t) - L_D(t) \right).
\end{align*}
\end{proof}

\section{Singularity analysis for increasing cycle lengths}
\label{sec_sing_analaysis}

The main goal of this section is to provide by means of complex analysis arguments a tool, Theorem~\ref{thm:total_cycles_asymp_with_restriction}, that allows us to compute the large-$n$ behaviour of $h_n(A_n)$ and of other related quantities.

For this purpose, as mentioned in the introduction, we have to impose assumptions on the sequence $\Theta = (\theta_m)_{m\in\N}$. In view of Lemma~\ref{lem:generating_hn_A}, it is natural to impose them on the function
\begin{align}\label{eq:def_g_Theta_2}
   g(t) = g_\Theta(t)
  &:=
  \sum_{m=1}^\infty \frac{\theta_m}{m} t^m.
\end{align}
We shall apply  the method of singularity analysis to the function $g(t)$. 
A detailed description of singularity analysis can be found for instance in \cite[Section~VI]{FlSe09}. First we need a preliminary definition.

\begin{definition}
\label{def_delta_0}
Let $0< r < R$ and $0 < \phi <\frac{\pi}{2}$ be given. We then define
\begin{align}
\Delta_0 = \Delta_0(r,R,\phi) = \set{ z\in \C ; |z|<R, z \neq r ,|\arg(z-r)|>\phi}
\label{eq_def_delta_0}.
\end{align}

\begin{figure}[ht!]
\centering
 \includegraphics[width=.4\textwidth]{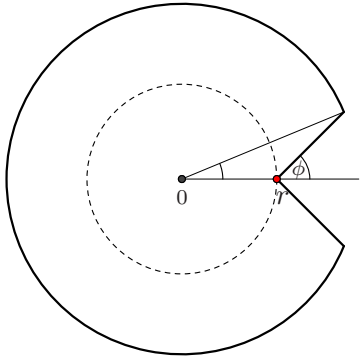}
\put(-74,62){\mbox{\scriptsize$0$}}
\put(-36.5,62){\mbox{$r$}}
\put(-30.5,73){\mbox{\scriptsize$\phi$}}

\caption{Illustration of $\Delta_0$}
\label{fig_delta_0}
\end{figure}

\end{definition}
Now we can introduce the family of functions we are interested in.
\begin{definition}
 \label{def_function_class_F_alpha_r}
Let $r,\vartheta>0$ and $K\in\R$ be given. We write $\mathcal{F}(r,\vartheta,K)$ for the set of all functions $g(t)$ satisfying
\begin{enumerate}
 \item $g(t)$ is holomorphic in $\Delta_0(r,R,\phi)$ for some $R>r$ and $0 < \phi <\frac{\pi}{2}$ and
 \item 
   \begin{align}
   g(t) = \vartheta \log\left( \frac{1}{1-t/r} \right) + K + O \left( t-r \right)   \text{ as } t\to r.
   \label{eq_class_F_r_alpha_near_r}
   \end{align}\\
\end{enumerate}
%
%
\end{definition}
Notice that $\theta_m \equiv \vartheta$ leads to $g_\Theta(t) = -\vartheta \log\left( 1-t \right) \in \mathcal{F}(1,\vartheta,0)$ 
and thus the Ewens measure is covered by the family $\mathcal{F}(r,\vartheta,K)$.
Also functions of the form $-\vartheta \log\left( 1-t \right) + f(t)$ with $f(t)$ holomorphic for $|t|<1+\epsilon$ are contained in $\mathcal{F}(1,\vartheta,f(1))$.
In particular, the case $\theta_k \neq\vartheta$ for only finitely many $k$ is covered by the family $\mathcal{F}(1,\vartheta,\cdot)$.
\begin{remark}
From from \cite[Theorem~VI.4]{FlSe09} we deduce the following observation, which we will use frequently:
if $g(t)$ is defined as in \eqref{eq:def_g_Theta_2} and $g(t)\in\mathcal{F}(r,\vartheta,K)$, then there exists some $\eps_m$ such that 
\begin{align}
 \label{eq:theta_k_to_vartheta}
\theta_m r^m = \vartheta +\eps_m \quad \text{ with } \ \eps_m \to 0 \ \text{ and } \ \sum_{m=1}^\infty \frac{|\eps_m|}{m} < \infty.
\end{align}
\end{remark}
We are now ready to state the main theorem of this section.
\begin{theorem}
\label{thm:total_cycles_asymp_with_restriction}
Let $g(t)$ in $\mathcal{F}(r,\vartheta, K)$ and $(D_n^{(j)})_{n\in\N, 1 \leq j \leq k}$ with $D_n^{(j)} \subset \set{1,\dots,n}$ be given. We define 
\begin{align}
\label{eq:def_d_n}
d_n^{(j)}:= 
\begin{cases}
 \max D_n^{(j)}  & \text{if }D_n^{(j)}\neq \emptyset,\\
 1 & \text{ otherwise },
\end{cases}
\quad \text{ and } \quad 
\bar{d}_n := \max \{d_n^{(j)}\}.  
\end{align}
Let further
\begin{align}\label{eq:replace}
G_n(t,w,v_1,...,v_k) := \exp \left( w g(t) + \sum_{j=1}^k{v_j L_{D_n^{(j)}}(t)}\right) 
\end{align}
with $w, v_1,\dots,v_k\in\C$ and $L_{D_n^{(j)}}(t)$ as in Lemma~\ref{lem:generating_hn_A}. Suppose that for each $C\in\R$
\begin{align}
\label{eq:assumption_on_dn}
C \log n- \frac{n}{d_n} \to -\infty 
\end{align}
%
holds as $n \to \infty$. Then we have for any fixed $b\in\N$
\begin{align}
\label{eq_asymp_Gn}
[t^{n-b}]\left[ G_n(t,w,v_1,...,v_k) \right]& \\
= \frac{e^{Kw } n^{w \vartheta -1}}{r^{n-b}}   
&\exp\Bigg( \sum_{j=1}^k{v_j L_{D_n^{(j)}}(r)} \Bigg)  
\Bigg(\frac{1}{\Gamma(w \vartheta)} +  O\bigg(\frac{\bar{d}_n}{n} \bigg) \Bigg) \nonumber
\end{align}
uniformly for bounded $|w|, |v_1| ,..., |v_k| \leq \hat{r}$ for some $\hat{r} > 0$.\\
\end{theorem}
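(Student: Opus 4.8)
The plan is to extract the coefficient $[t^{n-b}]$ of $G_n$ via a Cauchy integral over a circle of radius close to $r$, deformed into a Hankel-type contour adapted to the $\Delta_0$-domain, and to control the extra factor $\exp(\sum_j v_j L_{D_n^{(j)}}(t))$ as a perturbation. First I would write $G_n(t,w,v_1,\dots,v_k) = \left(\frac{1}{1-t/r}\right)^{w\vartheta} e^{Kw} \exp\!\big(w R(t)\big) \exp\!\big(\sum_j v_j L_{D_n^{(j)}}(t)\big)$, where $R(t) = O(t-r)$ is the holomorphic remainder coming from the definition of $\mathcal{F}(r,\vartheta,K)$ in \eqref{eq_class_F_r_alpha_near_r}. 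The classical singularity analysis transfer theorem (\cite[Theorem~VI.3]{FlSe09}) gives $[t^{n-b}]\left(\frac{1}{1-t/r}\right)^{w\vartheta} = \frac{n^{w\vartheta-1}}{r^{n-b}\Gamma(w\vartheta)}(1+O(1/n))$, uniformly for $|w|\le\hat r$, so the skeleton of the asymptotic is already visible; the work is to show that multiplying by the two exponential factors replaces the error term $O(1/n)$ by $\exp(\sum_j v_j L_{D_n^{(j)}}(r))\,(1+O(\bar d_n/n))$.

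The key steps, in order: (1) Fix the Hankel contour $\gamma = \gamma_n$ at distance $\sim 1/n$ from $r$, inside $\Delta_0$; on $\gamma$ the factor $w R(t)$ is $O(|t-r|) = O(1/n)$ and contributes only a multiplicative $1+O(1/n)$ after integration, since $e^{wR(t)}$ is holomorphic and bounded near $r$. (2) The heart of the matter is the factor $P_n(t) := \exp(\sum_j v_j L_{D_n^{(j)}}(t))$, which is a polynomial (each $L_{D_n^{(j)}}$ is a finite sum over $D_n^{(j)}\subset\{1,\dots,n\}$) but whose degree grows with $n$. I would Taylor-expand $P_n(t)$ around $t=r$: write $P_n(t) = P_n(r)\exp\big(\sum_j v_j (L_{D_n^{(j)}}(t) - L_{D_n^{(j)}}(r))\big)$ and bound $L_{D_n^{(j)}}(t) - L_{D_n^{(j)}}(r)$ on the portion of the contour where $|t-r| \lesssim 1/n$. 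Using \eqref{eq:theta_k_to_vartheta}, i.e. $\theta_m r^m = \vartheta + \eps_m$ with $\eps_m\to 0$, one has $L'_{D_n^{(j)}}(t) = \sum_{m\in D_n^{(j)}} \theta_m t^{m-1}$, and on the near-$r$ arc $|L'_{D_n^{(j)}}(t)| = O(\bar d_n)$ roughly (the top term $m\le\bar d_n$ dominates, each term being $O(1/r)\cdot\sup|t/r|^{m}$ which is $O(1)$ on the relevant arc); hence $|L_{D_n^{(j)}}(t)-L_{D_n^{(j)}}(r)| = O(\bar d_n)\cdot|t-r| = O(\bar d_n/n)$, giving $P_n(t) = P_n(r)(1 + O(\bar d_n/n))$ on that arc. (3) Away from $r$ on $\gamma$ — the ``large arc'' of radius $R' \in (r,R)$ — one needs that the whole integrand is negligible; here the bound $\left|\frac{1}{1-t/r}\right|^{\Re(w\vartheta)}$ is bounded by a power of $n$ while $|P_n(t)|$ could a priori be large, and this is exactly where assumption \eqref{eq:assumption_on_dn}, $C\log n - n/d_n \to -\infty$, enters: it guarantees that $r^{-(n-b)}$ (from the radius $r$ in the main term) beats $(R')^{-(n-b)}$ times the polynomial growth $|P_n(t)| \le \exp(O(\bar d_n))$ coming from the degree-$\bar d_n$ polynomial $P_n$, since $\exp(O(\bar d_n)) = \exp(o(n/\log n)\cdot\text{const})$ is swamped by the geometric gain $(r/R')^{n}$ — wait, more precisely, one compares $\exp(O(\bar d_n))$ against $n^{-C}$ for all $C$, which is what \eqref{eq:assumption_on_dn} provides after taking logarithms (note $\bar d_n$ plays the role of $d_n$ there). (4) Assemble: the near-$r$ arc yields $\frac{e^{Kw}n^{w\vartheta-1}}{r^{n-b}}P_n(r)(\frac{1}{\Gamma(w\vartheta)} + O(\bar d_n/n))$, the far arc is $O(n^{-M})$ for every $M$ and absorbed into the error, and uniformity in $|w|,|v_j|\le\hat r$ follows because all $O(\cdot)$ constants depend only on $\hat r$, $r$, $R$, $\phi$ and the fixed data of $g$.

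The main obstacle I expect is step (3): honestly controlling the polynomial factor $P_n(t)$ on the far part of the contour. The degree of $P_n$ is $\bar d_n$, which is $o(n)$ but not bounded, so $P_n$ is genuinely $n$-dependent and one cannot simply invoke the standard transfer theorem as a black box for $G_n$; one must prove a quantitative version. The delicate point is getting a clean bound of the form $\sup_{|t|=R'}|P_n(t)| \le \exp(c\,\bar d_n / r^{\bar d_n})$ or similar — in fact since $L_{D_n^{(j)}}(t) = \sum_{m\in D_n^{(j)}}\frac{\theta_m}{m}t^m$ and $\theta_m \le (\vartheta + \sup|\eps_m|)r^{-m}$, one gets $|L_{D_n^{(j)}}(R')| \le C' \sum_{m\le \bar d_n} \frac{(R'/r)^m}{m} \le C'' \frac{(R'/r)^{\bar d_n}}{\bar d_n}$, so $|P_n(t)|$ on the large arc can be as large as $\exp(C(R'/r)^{\bar d_n})$. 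One then needs $(R'/r)^{-(n-b)}\exp(C(R'/r)^{\bar d_n})$ to be super-polynomially small; writing $\rho = R'/r > 1$ this is $\exp(-(n-b)\log\rho + C\rho^{\bar d_n})$, and since $\bar d_n = o(n)$ one has $\rho^{\bar d_n} = \exp(\bar d_n \log\rho) = n^{o(1)}$-ish only if $\bar d_n = O(\log n)$ — so actually one must choose $R' = r(1 + \kappa/\bar d_n)$ shrinking toward $r$, balancing $\rho^{\bar d_n} \approx e^{\kappa}$ bounded against $(n-b)\log\rho \approx (n-b)\kappa/\bar d_n \to \infty$ by \eqref{eq:assumption_on_dn}. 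Making this contour-radius choice precise, and checking it stays inside $\Delta_0$ and is compatible with the Hankel shape near $r$, is the technical crux; everything else is routine singularity-analysis bookkeeping.
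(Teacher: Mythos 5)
Your proposal takes essentially the same route as the paper: a Cauchy integral over a Hankel-type contour whose outer radius shrinks as $R(n)=r(1+1/\bar{d}_n)$, treating $L_{D_n^{(j)}}$ as a perturbation with $L_{D_n^{(j)}}(t)=L_{D_n^{(j)}}(r)+O(\bar{d}_n/n)$ on the inner arcs, and invoking assumption~\eqref{eq:assumption_on_dn} to kill the large-arc contribution. One caveat: your intermediate bound $\sum_{m\le\bar{d}_n}(R'/r)^m/m \le C''(R'/r)^{\bar{d}_n}/\bar{d}_n$ fails once $R'/r=1+\kappa/\bar{d}_n$ (the sum is then of order $\log\bar{d}_n$, matching the paper's \eqref{eq_integral_gamma_1_a}, not $O(1/\bar{d}_n)$), and $\exp\bigl(\sum_j v_j L_{D_n^{(j)}}(t)\bigr)$ is not literally a polynomial; neither slip damages the argument, since the correct $O(\log\bar{d}_n)$ is still swallowed by $(R'/r)^{-n}\approx e^{-n/\bar{d}_n}$ under~\eqref{eq:assumption_on_dn}.
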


\begin{remark}
 We have introduced in Theorem~\ref{thm:total_cycles_asymp_with_restriction} more than one set $D_n$ since this will allow us to compute easily
the finite dimensional distributions of the process $B_n(x)$ in Section~\ref{sec:func_limit_theorem}.
\end{remark}

\begin{remark}
Apparently assumption \eqref{eq:assumption_on_dn} is not fulfilled if $D_n^{(j)} = \set{1,\dots,n}$ for some $j$. However, in this case
\begin{align*}
 \nth{\exp\left( vL_{D_n^{(j)}}(t)\right)} =  \nth{\exp\left( v g(t) \right)} 
\end{align*}
holds and we can thus handle $D_n^{(j)} = \set{1,\dots,n}$ with Theorem~\ref{thm:total_cycles_asymp_with_restriction} by replacing $L_{D_n^{(j)}}(t)$ with $g(t)$ in \eqref{eq:replace}.
\end{remark}

\begin{remark}
 We will mostly use \eqref{eq_asymp_Gn} with $b = 0$, except in Subsection~\ref{sec:large_cycles} where we study the behaviour of the large cycles.
\end{remark}

%


Before proving Theorem~\ref{thm:asymp_cycle_counts} we deduce the large-$n$ behaviour of $h_n$.
\begin{corollary}
\label{cor:haviour_of_hn}
Let $g_\Theta(t)$ in $\mathcal{F}(r,\vartheta, K)$ be given and let $(A_n)_{n\in\N}$ be the defining sets of the measures $\PTA{A_n}{.}$ (see Definition~\ref{def_A-weighted_probabililty_measure}). We set $D_n:=\set{1,\dots,n}\setminus A_n$ and define $d_n$ as in \eqref{eq:def_Dn}.
If the sequence $d_n$ fulfils the assumption \eqref{eq:assumption_on_dn}, then
\begin{align*}
 h_n (A_n)
=
\exp\left( - L_{D_n}(r) \right) \frac{n^{\vartheta -1} e^{K } }{r^n \Gamma(\vartheta)} \left(1 +  O\left(\frac{d_n}{n} \right) \right).
\end{align*}
In particular, \eqref{eq:assumption_on_dn} is fulfilled if
\begin{align*}
 d_n \sim \log n \quad \text{ or } \quad d_n \sim n^{\alpha} \text{ for } 0 < \alpha <1.
\end{align*}
\end{corollary}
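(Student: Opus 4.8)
The plan is to obtain Corollary~\ref{cor:haviour_of_hn} as an essentially immediate consequence of Theorem~\ref{thm:total_cycles_asymp_with_restriction} together with the generating-function identity of Lemma~\ref{lem:generating_hn_A}. Recall from that lemma that, as formal power series,
\[
 \sum_{n=0}^\infty h_n(A_n) t^n = \exp\bigl( g_\Theta(t) - L_{D_n}(t) \bigr),
\]
where $D_n = \set{1,\dots,n}\setminus A_n$ and $L_{D_n}(t) = \sum_{m\in D_n}\frac{\theta_m}{m} t^m$. The key observation is that, although $A_n$ (hence $D_n$) depends on $n$, the coefficient $h_n(A_n) = [t^n]\exp(g_\Theta(t) - L_{D_n}(t))$ is unaffected by this: the $n$-th coefficient of a power series whose terms of degree $\le n$ agree is the same, so we may freely use the fixed-$n$ set $D_n$ inside the exponential. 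Thus $h_n(A_n) = [t^n] G_n(t, w, v_1)\big|_{w=1,\,v_1 = -1}$ with the single set $D_n^{(1)} := D_n$, in the notation of \eqref{eq:replace}.

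First I would set $k=1$, $w = 1$, $v_1 = -1$, $b = 0$ in Theorem~\ref{thm:total_cycles_asymp_with_restriction}; here $\bar d_n = d_n^{(1)} = d_n$. The hypothesis \eqref{eq:assumption_on_dn} of the theorem is exactly the assumption imposed in the corollary, so it applies and yields
\[
 h_n(A_n) = [t^n]\,G_n(t,1,-1)
 = \frac{e^{K} n^{\vartheta - 1}}{r^n}\,\exp\bigl(-L_{D_n}(r)\bigr)\Bigl(\frac{1}{\Gamma(\vartheta)} + O\bigl(\tfrac{d_n}{n}\bigr)\Bigr),
\]
which is precisely the claimed asymptotic after pulling the $O(d_n/n)$ inside as a multiplicative $(1 + O(d_n/n))$ factor (legitimate since $1/\Gamma(\vartheta)$ is a fixed positive constant). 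One should also note that $L_{D_n}(r)$ is a genuine finite sum, since $D_n\subset\set{1,\dots,n}$, so $\exp(-L_{D_n}(r))$ is a well-defined positive real number and no convergence issue arises.

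Finally, for the last sentence of the corollary I would verify that the two stated growth regimes for $d_n$ satisfy \eqref{eq:assumption_on_dn}, i.e.\ that $C\log n - n/d_n \to -\infty$ for every $C\in\R$. If $d_n\sim\log n$ then $n/d_n \sim n/\log n$, which dominates $C\log n$ for any constant $C$, so the difference tends to $-\infty$. If $d_n\sim n^\alpha$ with $0<\alpha<1$ then $n/d_n\sim n^{1-\alpha}$, a positive power of $n$, which again dominates $C\log n$; hence \eqref{eq:assumption_on_dn} holds. There is essentially no obstacle here: the entire content of the corollary has been front-loaded into Theorem~\ref{thm:total_cycles_asymp_with_restriction}, and the only point requiring a word of care is the remark above that the $n$-dependence of the set inside the exponential does not affect the $n$-th coefficient, so that the theorem — stated for a fixed family of sets — may be invoked with the $n$-dependent sets of the measure $\PTA{A_n}{\cdot}$.
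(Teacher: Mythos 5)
Your proposal is correct and follows exactly the paper's route: apply Lemma~\ref{lem:generating_hn_A} to write $h_n(A_n)$ as the $n$-th coefficient of $\exp(g_\Theta(t)-L_{D_n}(t))$, invoke Theorem~\ref{thm:total_cycles_asymp_with_restriction} with $w=1$, $v_1=-1$, $b=0$, and check \eqref{eq:assumption_on_dn} directly for the two growth regimes. The only difference is that you spell out the (correct) details that the paper dismisses as ``immediate'' and ``obvious''.
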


\begin{proof}
We know from Lemma~\ref{lem:generating_hn_A} that
\begin{align*}
 h_n (A) 
 =
\nth{\exp \left(g_\Theta(t) - L_{D}(t)\right)}
\end{align*}
holds for arbitrary sets $A \subset \N$. Thus the first part of the corollary follows immediately from Theorem~\ref{thm:total_cycles_asymp_with_restriction}.
The second part is obvious.
\end{proof}

\begin{proof}[Proof of Theorem~\ref{thm:total_cycles_asymp_with_restriction}]
For simplicity, we assume $k=1, b=0$ and write $D_n:= D_n^{(1)},d_n := \bar{d}_n = d^{(1)}_n$ and $v:=v_1$. The proof of the general case is completely similar. We apply Cauchy's integral formula to $G_n(t,w,v)$. This gives 
\begin{align}
\label{eq_Expect_with_cauchy}
\nth{ G_n(t,w,v) }
=
\frac{1}{2\pi i} \int_{\gamma} \exp\left( w g(t) +v  L_{D_n}(t) \right) \, \frac{dt}{t^{n+1}}
\end{align}
for some curve $\gamma$. 
We follow the idea in \cite[Section~VI.3]{FlSe09} and choose the curve $\gamma$ as in Figure~\ref{fig:curve_flajolet_4}.
%
%
\begin{figure}[h]
\centering
\subfigure[\,$\gamma=\gamma_1\cup\gamma_2\cup\gamma_3\cup\gamma_4$]{\hspace{-1pc}
\includegraphics[height=.18\textheight]{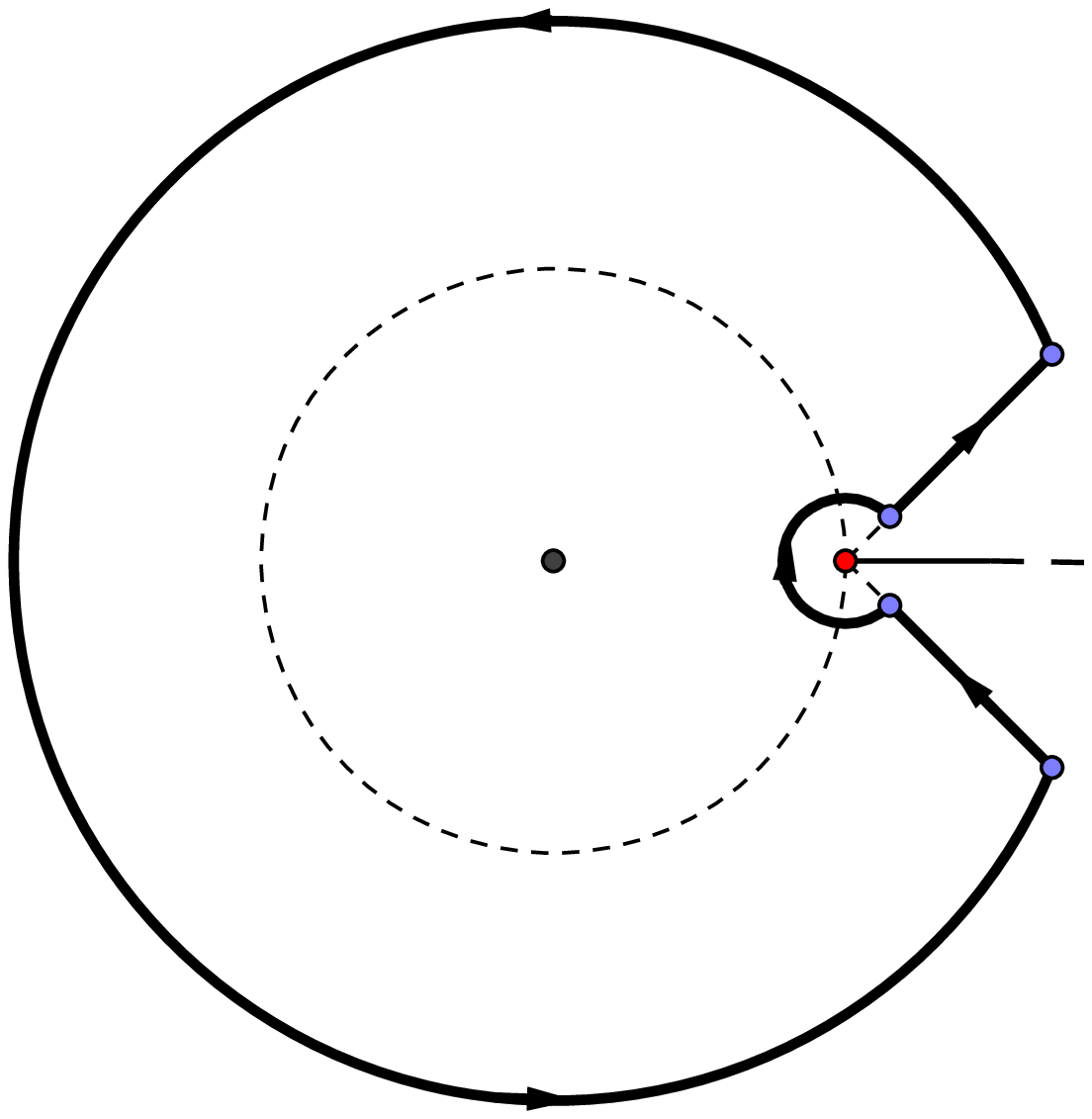}
 \label{fig:curve_flajolet_4}
 \put(-75,57){\mbox{\scriptsize$0$}}
  \put(-70,80){\mbox{\scriptsize$|z|= r$}}
  \put(-106,103){\mbox{\scriptsize$|z|= R^{\prime}$}}
  \put(-47,51){\mbox{\scriptsize$\gamma_3$}}
   \put(-25,69){\mbox{\scriptsize$\gamma_4$}}
   \put(-25,35){\mbox{\scriptsize$\gamma_2$}}
   \put(-100,51){\mbox{\scriptsize$\gamma_1$}}
 }
\hspace{2.5pc}\subfigure[\,$\gamma'=\gamma'_1\cup\gamma'_2\cup\gamma'_3$]{
   \includegraphics[height=.18\textheight]{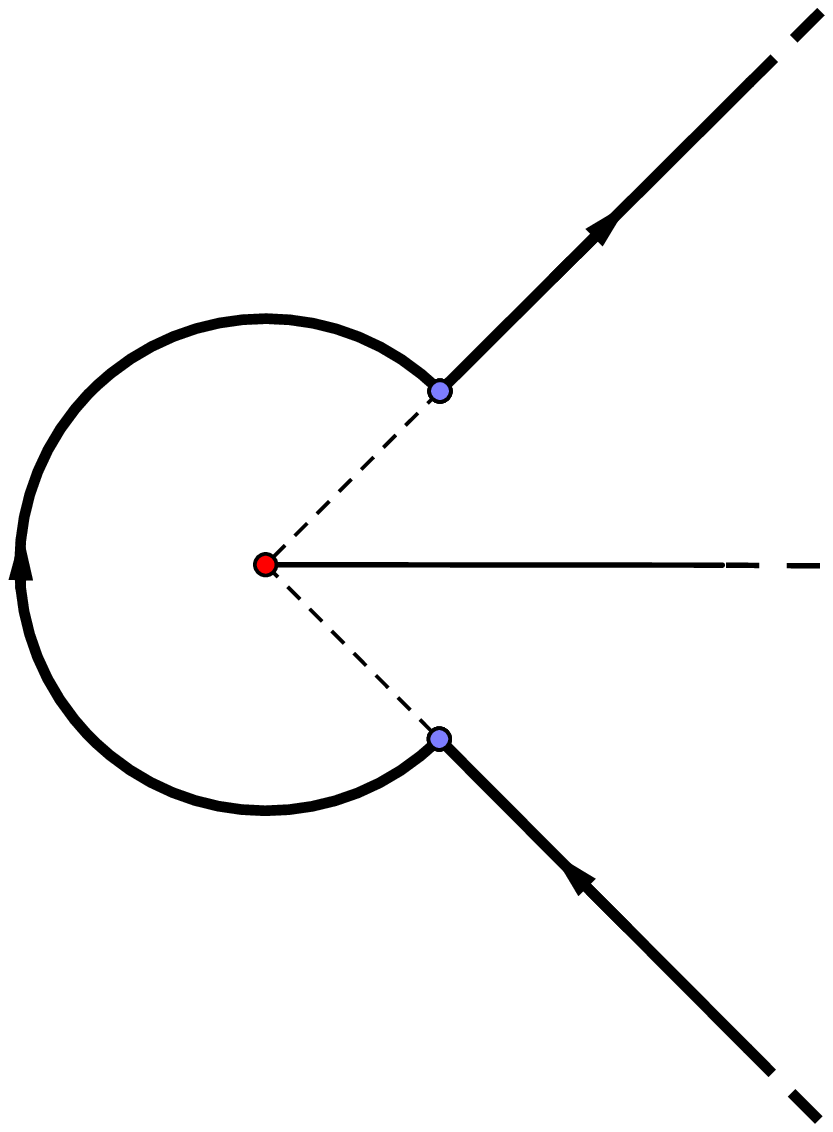}
   \label{fig:gamma'}
   \put(-58,42){\mbox{\scriptsize$0$}}
   \put(-86,80){\mbox{\scriptsize$|w| =1$}}
    \put(-26,27){\mbox{\scriptsize$\gamma'_1$}}
   \put(-91,50){\mbox{\scriptsize$\gamma'_2$}}
   \put(-26,95){\mbox{\scriptsize$\gamma'_3$}}
 }
\hspace{2.5pc}\subfigure[\,$\gamma''=\gamma''_1\cup\gamma''_2\cup\gamma''_3$]{
   \includegraphics[height=.18\textheight]{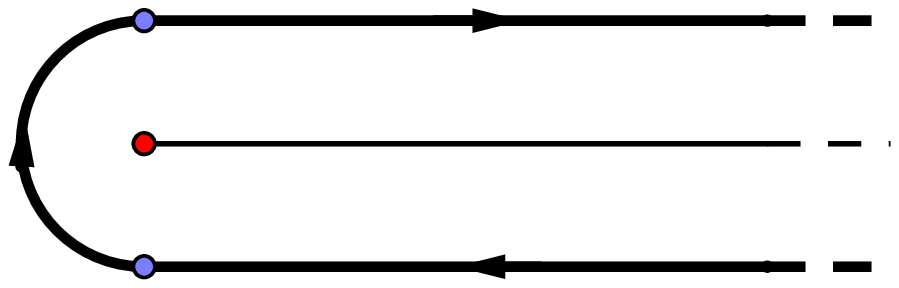}
   \label{fig:gamma''}
    \put(-79,49.5){\mbox{\scriptsize$0$}}
    \put(-46,30){\mbox{\scriptsize$\gamma''_1$}}
    \put(-98,49.5){\mbox{\scriptsize$\gamma''_2$}}
    \put(-46,71.0){\mbox{\scriptsize$\gamma''_3$}}
 }
\caption{{The curves used in the proof of Theorem~\ref{thm:total_cycles_asymp_with_restriction}}.}
\end{figure}
The main difference with \cite{FlSe09} is that we let the radius of the large circle slowly tend to $r$ while it is fixed in \cite{FlSe09}.
More precisely, by assumption $g(t)$ is holomorphic in $\Delta_0(r,R,\phi)$ (see \eqref{eq_def_delta_0}) and continuous on $\overline{\Delta_0(r,R,\phi)} \setminus\set{r}$.
We then define the radius of the large circle as
\begin{align*}
   R(n) := \min\set{r (1+ 1/d_n), R}
\end{align*}
and define further $\gamma$ as
\begin{align*}
 \gamma_1(\varphi) 
&:=
 R(n) e^{i\varphi} \ &\text{ for }& \ \varphi\in[-\pi+\alpha_n, \pi -\alpha_n]\\
\gamma_2(\varphi) 
&:=
 r\left(1 -\frac{1}{n} e^{-i\varphi} \right) \ &\text{ for }& \ \varphi\in[-\pi+\phi, \pi -\phi]\\
 \gamma_3(x) 
&:=
 r(1 + xe^{i\phi}) \ &\text{ for }& \ x \in[1/n, \hat{r}_n]\\
\gamma_4(x) 
&:=
r(1 + \bigl(\hat{r}_n-x \bigr)e^{-i\phi}) \ &\text{ for }& \ x \in[0,\hat{r}_n -1/n]
\end{align*}
where $\alpha_n$ and $\hat{r}_n$ are chosen such that the curve $\gamma$ is closed, i.e. $r + \hat{r}_ne^{i\phi} = R(n) e^{i\alpha_n}$.

We first compute the integral over $\gamma_1$. 
If $\sup_n d_n =C < \infty$, we clearly have $R(n) \geq \widetilde{R} > r$ for some $\widetilde{R}$ independent of $n$. 
Thus all points of the curve $\gamma_1$ have at least a distance $|\widetilde{R} - r|>0$ from $r$.
Therefore $g(t)$ is uniformly bounded on $\gamma_1$.
Furthermore $L_{D_n}(t) $ involves only $\theta_m$ with $m\leq C$ and is thus also uniformly bounded.
We get
\begin{align*}
\left|\frac{1}{2\pi i} \int_{\gamma_1} \exp\left( w g(t) +v  L_{D_n}(t) \right) \, \frac{dt}{t^{n+1}} \right| 
&= 
O(\widetilde{R}^{-n}) 
= 
O \left( \frac{n^{w \vartheta -2}}{r^n} e^{ v L_{D_n}(r) } \right).
\end{align*}
If $\sup_n d_n = \infty$ we have to be more careful. In this case
\begin{align*}
&\left|\frac{1}{2\pi i} \int_{\gamma_1} \exp\left( w g(t) +v  L_{D_n}(t) \right) \, \frac{dt}{t^{n+1}} \right| \\
&\leq 
\frac{1}{2\pi \bigl(R(n)\bigr)^n} 
\int_{-\pi+\alpha_n}^{\pi-\alpha_n} 
\left|\exp\left( w g\bigl(R(n)e^{i\varphi}\bigr) +v  L_{D_n}\bigl(R(n)e^{i\varphi}\bigr) \right) \right|  \, d\varphi\\
&\leq 
\frac{1}{\bigl(R(n)\bigr)^n} \exp(\max_{-\pi+\alpha_n \leq \varphi\leq \pi-\alpha_n } \{\Re\left[w g\bigl(R(n)e^{i\varphi}\bigr) +v  L_{D_n}\bigl(R(n)e^{i\varphi}\bigr)\right]\}).
\end{align*}
Using that $g(t)\in\mathcal{F}(r,\vartheta,K)$ is continuous on $\overline{\Delta_0(r,R,\phi)} \setminus\set{r}$ and 
the expansion of $g(t)$ around $r$, one immediately obtains
\begin{align*}
 \Re(g(t)) \leq \vartheta \log\left| \frac{1}{1-t/r} \right| + O(1) \ \text{ and } \
\Im(g(t)) =O(1)
\quad \text{ for all }t\in\Delta_0 \setminus\set{r}.
\end{align*}
This yields
\begin{align*}
 \Re[w g\bigl(R(n)e^{i\varphi}\bigr)] 
\leq 
|\Re(w)| \vartheta \log\left| \frac{1}{1-R(n)/r} \right| + O(1)
=
|\Re(w)| \vartheta \log d_n + O(1).
\end{align*}
Furthermore, we get
\begin{align}
\label{eq_integral_gamma_1_a}
\left | L_{D_n} \left( \left(1+ 1/d_n \right) r e^{i\varphi} \right) \right|
&\leq
\sum_{m\in D_n} \frac{\theta_m}{m} r^m \left( 1+ 1/d_n \right)^m
\nonumber\\
&\leq
\sum_{m\in D_n} \frac{\theta_m}{m} r^m \left(1+ O\left( m/d_n \right)  \right)
\nonumber\\
&\leq
L_{D_n}(r) + O(1)
\leq
\log d_n + O(1)
\end{align}
since $m \leq d_n$ and $\theta_m r^m\sim\vartheta$, see \eqref{eq:theta_k_to_vartheta}. We also have
\begin{align*}
(R(n)\bigr)^{-n}
\leq 
r^{-n}(1 + 1/d_n)^{-n} = r^{-n}\exp(-n \log(1 + 1/d_n)) \leq  r^{-n} \exp\left(-\frac{n}{2d_n} \right).
\end{align*}
Combining the above computations, we obtain
\begin{align*}
&\left|\frac{1}{2\pi i} \int_{\gamma_1} e^{ w g(t) +v  L_{D_n}(t)} \, \frac{dt}{t^{n+1}} \right| 
&=
O\left(r^{-n} \exp\left(-\frac{n}{2d_n} +(|\Re(w)|+|v|) \vartheta \log d_n \right)\right).
\end{align*}
It remains to prove that this is $O(n^{w\vartheta-2} \exp\bigl(-vL_{D_n}(r)\bigr)$. This holds if
\begin{align*}
 -\frac{n}{2d_n} +(|\Re(w)|\vartheta+|v|)  \log d_n \leq \Re((w\vartheta-2)\log n -vL_{D_n}(r)) + O(1)
\end{align*}
but this follows immediately from assumption \eqref{eq:assumption_on_dn} 
since $$L_{D_n}(r) \leq \log d_n \leq \log n.$$

The computations of the integrals over $\gamma_2,\gamma_3$ and $\gamma_4$ are completely similar to the computations in the proof of Theorem~VI.3 in \cite{FlSe09} and we thus give only a short overview. A simple calculation gives
\begin{align}
\label{eq_integral_gamma_2_a}
L_{D_n} \bigl(\gamma_2(\varphi)\bigr)
= 
L_{D_n}(r)
+ O\left(d_n/n \right)
\ \text{ and } \
 L_{D_n} \bigl( \gamma_3(x)\bigr)
=
 L_{D_n}(r) +    O\left(d_n x \right).
\end{align}
This observations together with the computations in \cite{FlSe09} then yields
\begin{align*}
\frac{1}{2\pi i} \int_{\gamma_2\cup \gamma_3\cup\gamma_4} G_n(t,w,v) \ \frac{dt}{t^{n+1}}
=
\frac{1}{2\pi i} \frac{n^{w\vartheta-1}}{r^n} \int_{\gamma'}  z^{-w\vartheta} e^{ z } dt \ \left(1+O\left( d_n/n \right) \right)
\end{align*}
with $\gamma'$ as in Figure~\ref{fig:gamma'}. 
The variable substitution $x= -z$ and a simple contour argument then gives
\begin{align*}
 \frac{1}{2\pi i} \frac{n^{w\vartheta-1}}{r^n} \int_{\gamma'}  z^{-w\vartheta} e^{ z } dt
= \frac{1}{2\pi i } \int_{\gamma''}  (-x)^{-w\vartheta} e^{-x} dt 
= \frac{1}{\Gamma(w\vartheta)}
\end{align*}
with $\gamma''$ as in Figure~\ref{fig:gamma''}. 
We have used in the second equality that that the integral is a well know expression for the inverse of $\Gamma$-function. 
Further details can be found for instance in \cite[Section~B.3]{FlSe09}.
\end{proof}

To investigate the bahaviour of the large cycles and a functional central limit theorem we have to consider in Section~\ref{sec:large_cycles} and \ref{sec:funct_limit_without_restrictions} expressions of the form
\begin{align}
 \nth{f(t) \cdot\exp \left( g(t) + v L_{D_n}(t)\right) }
\end{align}
where the funciton $f(t)$ is either a polynomial depending on $n$ or it is independent of $n$ and behaves like a derivative of the logarithm near $r$.
By suitable modifications of Theorem~\ref{thm:total_cycles_asymp_with_restriction} we obtain in this case the following asymptotics.

\begin{corollary}
\label{prop:tn_of_addintional_f}
Let the assumptions of Theorem~\ref{thm:total_cycles_asymp_with_restriction} be fulfilled with $k=1$ and 
write $D_n:= D_n^{(1)},d_n := \bar{d}_n = d^{(1)}_n$. If $f(t)$ is a holomorphic function in $\Delta_0$ and there exists a constant $\beta\geq 0$ such that
  \begin{align}\label{eq:f_near_r}
    f(t) = (1-t/r)^{-\beta}(1+ O(t-r)), \quad t\to r \ \text{ and } \ t\in\Delta_0,
  \end{align}
then
\begin{align}
&\nth{f(t) \cdot\exp \bigl( g(t) +v L_{D_n}(t)\bigr) }\nonumber\\
&= 
\frac{e^{K} n^{\vartheta + \beta -1}}{r^n}   
\exp\big( v L_{D_n}(r)\big)  \Bigg(\frac{1}{\Gamma(\vartheta+\beta)} +  O\bigg(\frac{d_n}{n} \bigg) \Bigg) .
\end{align}
\end{corollary}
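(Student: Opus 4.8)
The plan is to mimic the proof of Theorem~\ref{thm:total_cycles_asymp_with_restriction} almost verbatim, the only new ingredient being the extra factor $f(t)$ which contributes an additional polynomial term to the singularity exponent. As before, I would start from Cauchy's formula
\begin{align*}
\nth{f(t) \exp\bigl(g(t) + v L_{D_n}(t)\bigr)}
=
\frac{1}{2\pi i}\int_\gamma f(t)\exp\bigl(g(t) + v L_{D_n}(t)\bigr)\,\frac{dt}{t^{n+1}},
\end{align*}
and use exactly the same contour $\gamma = \gamma_1 \cup \gamma_2 \cup \gamma_3 \cup \gamma_4$ with large radius $R(n) = \min\{r(1+1/d_n), R\}$ as in the proof of Theorem~\ref{thm:total_cycles_asymp_with_restriction}. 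The point is that $f$ is holomorphic on $\Delta_0$ by hypothesis, so the contour deformation is still legitimate, and all one has to do is carry the factor $f$ through each of the four integral estimates.

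On $\gamma_1$ I would argue that, since $f(t) = (1-t/r)^{-\beta}(1 + O(t-r))$ near $r$ and $f$ is holomorphic (hence bounded away from $r$) on the rest of $\Delta_0$, one has $|f(t)| \leq C \bigl|1 - t/r\bigr|^{-\beta} + O(1)$ uniformly on $\gamma_1$, so that $\Re\log|f(t)| \leq \beta\log d_n + O(1)$ on $\gamma_1$. This merely adds a term $\beta\log d_n$ inside the exponential bound obtained there for the pure-$g$ case; since assumption~\eqref{eq:assumption_on_dn} already absorbs $C\log n$ against $n/d_n$ for every constant $C$, enlarging the constant by $\beta$ changes nothing, and the $\gamma_1$-integral is again negligible compared to the main term. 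On $\gamma_2, \gamma_3, \gamma_4$ I would substitute $t = r(1 - z/n)$ exactly as in Flajolet--Sedgewick; then $f(t) = (z/n)^{-\beta}(1 + O(1/n))$ (using that near $r$, $1 - t/r = z/n$), so $f$ contributes a factor $n^{\beta}$ and turns the $z^{-w\vartheta}$ in the Hankel integral into $z^{-(\vartheta+\beta)}$ — here $w=1$ since $f$ plays the role previously played by a power of $\exp(g)$. Together with the $L_{D_n}$ expansions~\eqref{eq_integral_gamma_2_a}, which are unchanged, this yields
\begin{align*}
\frac{1}{2\pi i}\int_{\gamma_2\cup\gamma_3\cup\gamma_4} f(t)\,G_n(t,1,v)\,\frac{dt}{t^{n+1}}
=
\frac{n^{\vartheta+\beta-1}}{r^n}\,e^{vL_{D_n}(r)}\,\frac{1}{2\pi i}\int_{\gamma'} z^{-(\vartheta+\beta)} e^{z}\,dz\,\bigl(1 + O(d_n/n)\bigr),
\end{align*}
and the Hankel-contour evaluation gives $1/\Gamma(\vartheta+\beta)$, producing the claimed formula once the constant $e^K$ from the expansion of $g$ at $r$ is reinstated.

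The only genuinely new point — and the step I expect to need the most care — is verifying that the error term $O(t-r)$ in~\eqref{eq:f_near_r} really does not degrade the error estimate below $O(d_n/n)$: after the substitution $t = r(1 - z/n)$ one has $t - r = -rz/n$, and on the relevant portions of $\gamma_2,\gamma_3$ the variable $z$ ranges over a bounded Hankel-type contour, so $O(t-r) = O(1/n)$ there, which is subsumed in the stated $O(d_n/n)$ (recall $d_n \geq 1$). One should also check the compatibility of the two asymptotic regimes near $r$, namely that $f(t)(1-t/r)^{\beta} \to 1$ is used only on the part of the contour pinching $r$ while the holomorphy/boundedness of $f$ is used on $\gamma_1$; there is no conflict since these are disjoint arcs. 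Everything else is a routine transcription of the already-established argument, so I would state that explicitly and refer the reader to the proof of Theorem~\ref{thm:total_cycles_asymp_with_restriction} and to \cite[Sections~VI.3, B.3]{FlSe09} for the details of the $\gamma_2,\gamma_3,\gamma_4$ computations rather than repeating them.
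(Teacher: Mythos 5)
Your argument is correct, but it is not the route the paper takes. The paper's proof is a two-line reduction: since $g\in\mathcal{F}(r,\vartheta,K)$ satisfies \eqref{eq_class_F_r_alpha_near_r} and $f$ satisfies \eqref{eq:f_near_r}, one has $\log f(t)+g(t)=-(\vartheta+\beta)\log(1-t/r)+K+O(t-r)$ as $t\to r$, so $\log f+g\in\mathcal{F}(r,\vartheta+\beta,K)$, and the corollary follows by applying Theorem~\ref{thm:total_cycles_asymp_with_restriction} verbatim with $g$ replaced by $\log f+g$ (and $w=1$). You instead re-run the entire contour computation carrying $f$ as an extra factor through the estimates on $\gamma_1,\dots,\gamma_4$. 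What the paper's reduction buys is brevity and the absence of any new analysis; what your version buys is that it does not need $\log f$ to be a well-defined holomorphic function on all of $\Delta_0$ (the paper's one-liner tacitly assumes $f$ is non-vanishing there, which is not part of the hypotheses), so your argument is in this respect slightly more careful. Two small points in your write-up deserve a caveat: on $\gamma_2,\gamma_3,\gamma_4$ the rescaled variable $z=n(1-t/r)$ does \emph{not} stay on a bounded contour (it extends out to $|z|\asymp n/d_n$), but the tail is killed by the factor $|t|^{-n}$ exactly as in the Flajolet--Sedgewick transfer argument that the paper itself invokes, so this is the same delegation the original proof makes, not a new gap; and on $\gamma_1$ the bound $|f(t)|=O(|1-t/r|^{-\beta})$ requires combining \eqref{eq:f_near_r} near $r$ with boundedness of $f$ on the rest of the (eventually shrinking) circle $|t|=R(n)$, which holds for the same reasons the analogous bound on $g$ holds in the theorem's proof. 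With those understood, your proof is a valid, if much longer, alternative to the paper's reduction.
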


\begin{proof}
Since we have $g(t) \in\mathcal{F}(r,\vartheta,K)$, we get with \eqref{eq_class_F_r_alpha_near_r} and \eqref{eq:f_near_r}
\begin{align*}
\log f(t)  + g(t)
=
-(\vartheta + \beta) \log(1-t/r) + K + O \left( t-r \right)   \text{ as } t\to r.
\end{align*}
We thus see that $\log f(t)  + g(t) \in\mathcal{F}\bigl(r,\vartheta + \beta,K\bigr)$ and the corollary follows immediately from
Theorem~\ref{thm:total_cycles_asymp_with_restriction} with $g(t)$ replaced by $\log f(t)  + g(t)$.
\end{proof}

\begin{corollary}
\label{prop:tn_of_addintional_poly}
Let the assumptions of Theorem~\ref{thm:total_cycles_asymp_with_restriction} be fulfilled with $k=1$ and 
write $D_n:= D_n^{(1)},d_n := \bar{d}_n = d^{(1)}_n$. Let further $P_n(t)$ be a sequence of polynomials with
$$
P_n(t)= \sum_{k} p_{k,n} t^k, \quad \text{ where }\  p_{n,k} \geq 0,
$$ 
such that $P_n\bigl(r(1+1/d_n)\bigr) = P_n(r) (1+o(1))$. We then have for each $v\in\R$
\begin{align}
&\nth{P_n(t) \cdot\exp \left( g(t) + v L_{D_n}(t)\right) }\nonumber\\
= 
&P_n(r)  \frac{n^{\vartheta-1}}{r^n} \exp\big(v L_{D_n}(r) \bigr) 
\Bigg(\frac{1}{\Gamma(w \vartheta)} + o(1) \Bigg)  
\end{align}
\end{corollary}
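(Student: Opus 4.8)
The plan is to reduce Corollary~\ref{prop:tn_of_addintional_poly} to Theorem~\ref{thm:total_cycles_asymp_with_restriction} by repeating the contour argument of that theorem, keeping careful track of the extra polynomial factor $P_n(t)$. Concretely, I would apply Cauchy's formula to $P_n(t)\exp\bigl(g(t)+vL_{D_n}(t)\bigr)$ along exactly the curve $\gamma=\gamma_1\cup\gamma_2\cup\gamma_3\cup\gamma_4$ used in the proof of the theorem, with the same choice $R(n)=\min\{r(1+1/d_n),R\}$. The point is that on each piece of the curve $P_n(t)$ is essentially constant, equal to $P_n(r)(1+o(1))$: since the coefficients $p_{k,n}$ are nonnegative, $|P_n(t)|\le P_n(|t|)\le P_n\bigl(r(1+1/d_n)\bigr)=P_n(r)(1+o(1))$ for every $t$ on $\gamma_1\cup\gamma_2\cup\gamma_3\cup\gamma_4$ (all of which lie in $\{|z|\le r(1+1/d_n)\}$), and conversely on the small arc $\gamma_2$ and near $r$ one has $P_n(t)=P_n(r)(1+o(1))$ as well because $|t-r|=O(1/n)=o(1/d_n)$ there and $P_n(r(1\pm c/n))=P_n(r)(1+o(1))$ follows by monotonicity from the hypothesis $P_n(r(1+1/d_n))=P_n(r)(1+o(1))$.

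With this in hand, I would treat the four contributions separately, mirroring the theorem's proof. On $\gamma_1$ we bound $|P_n(t)|\le P_n(r)(1+o(1))$ and absorb this into the already-established estimate; since the $\gamma_1$-bound in the theorem has a gain of $n^{w\vartheta-2}$ (a full factor of $n$ below the main term) and $P_n(r)$ only contributes a subexponential, $1+o(1)$ multiplicative factor relative to the main term's $P_n(r)$, the $\gamma_1$-integral stays negligible; here I use that the hypothesis on $P_n$ together with $p_{k,n}\ge0$ forces $P_n$ to grow at most subexponentially, so it cannot spoil the exponential decay $r^{-n}\exp(-n/(2d_n))$. On $\gamma_2\cup\gamma_3\cup\gamma_4$ I would pull $P_n(r)$ out of the integral at the cost of a $(1+o(1))$ factor, using the local constancy of $P_n$ noted above, and then invoke verbatim the computation of the theorem which identifies the remaining integral with $\frac{n^{\vartheta-1}}{r^n}\exp(vL_{D_n}(r))\bigl(\frac{1}{\Gamma(\vartheta)}+O(d_n/n)\bigr)$ (taking $w=1$). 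Combining, the $o(1)$ from $P_n$ and the $O(d_n/n)$ from the theorem merge into a single $o(1)$, which yields the claimed formula; I note in passing that the $\Gamma(w\vartheta)$ appearing in the statement should read $\Gamma(\vartheta)$, i.e. $w=1$ here.

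The main obstacle, and the only place needing genuine care, is justifying that $P_n(r)$ may be factored out of the $\gamma_2,\gamma_3,\gamma_4$ integrals with only a $(1+o(1))$ error, uniformly along those arcs. On $\gamma_3$ and $\gamma_4$ the parameter $x$ ranges up to $\hat r_n$, which is of order $1/d_n$, so a point $\gamma_3(x)$ can be as far as distance $\asymp r/d_n$ from $r$; there one only controls $|P_n(\gamma_3(x))|\le P_n(r(1+x))\le P_n(r(1+1/d_n))=P_n(r)(1+o(1))$ from above, and a matching lower bound $|P_n(\gamma_3(x))|\ge$ something comparable to $P_n(r)$ is not literally available pointwise. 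The clean way around this is to observe that the bulk of the $\gamma_3\cup\gamma_4$ contribution, after the substitution to the Hankel contour $\gamma'$, comes from the region $|z|=O(1)$, i.e. $x=O(1/n)$, where $P_n$ genuinely equals $P_n(r)(1+o(1))$; the tail $x\gg 1/n$ contributes only $O(d_n/n)$ to begin with (as in \cite{FlSe09} and in the theorem's proof) and there the one-sided bound $|P_n|\le P_n(r)(1+o(1))$ suffices to keep it within the $O(d_n/n)$ error. Thus one splits each of $\gamma_3,\gamma_4$ at, say, $x=1/n^{1/2}$: on the inner part $P_n$ is $P_n(r)(1+o(1))$ exactly and the theorem's computation applies; on the outer part the integrand is already $o(n^{\vartheta-1}r^{-n}\exp(vL_{D_n}(r)))$ even after multiplying by the upper bound for $|P_n|$. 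Assembling the pieces gives the corollary.
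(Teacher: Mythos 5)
Your proposal takes essentially the same route as the paper: apply Cauchy's formula along the same contour $\gamma = \gamma_1\cup\gamma_2\cup\gamma_3\cup\gamma_4$ with radius $R(n)$, use nonnegativity of the $p_{k,n}$ to bound $|P_n(t)| \le P_n(|t|)\le P_n(R(n)) = P_n(r)(1+o(1))$ on the contour, and reduce to the integral estimates already established in Theorem~\ref{thm:total_cycles_asymp_with_restriction}. The paper's own proof only carries out the $\gamma_1$ estimate explicitly and declares the remaining arcs analogous, so the care you take over the local behaviour of $P_n$ near $r$ on $\gamma_2\cup\gamma_3\cup\gamma_4$ (pulling $P_n(r)$ out with a $1+o(1)$ error on the inner part and using the one-sided bound on the tail) fills in exactly what the paper omits, and your observation that $\Gamma(w\vartheta)$ in the statement should read $\Gamma(\vartheta)$ is also correct.
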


\begin{proof}
Since the computations for this proof are very similar to those of the proof of Theorem~\ref{thm:total_cycles_asymp_with_restriction}, 
we only illustrate the estimate over $\gamma_1$ with $\gamma_1(\varphi) = r(1+ 1/d_n)e^{i\varphi}$. 
We argue as in the first part of the proof of Theorem~\ref{thm:total_cycles_asymp_with_restriction} and use that $P_n(t)$ has positive coefficients. We obtain
%
%
\begin{align*}
&\left|\frac{1}{2\pi i} \int_{\gamma_1} P_n(t) \exp\big(g(t) + v L_{D_n}(t)\big) \, \frac{dt}{t^n}\right| \nonumber\\
&\leq
\frac{P_n\big(r (1+ 1/d_n)\bigr)}{2\pi \bigl(R(n)\bigr)^n} 
\int_{-\pi+\alpha_n}^{\pi-\alpha_n} 
\left|\exp\left( w g\bigl(R(n)e^{i\varphi}\bigr) +v  L_{D_n}\bigl(R(n)e^{i\varphi}\bigr) \right) \right|  \, d\varphi
\end{align*}
The latter integral is now the same as in the proof of Theorem~\ref{thm:total_cycles_asymp_with_restriction}.
Using the estimate in the proof of Theorem~\ref{thm:total_cycles_asymp_with_restriction} and the assumption on $P_n(t)$ then completes the proof. 
\end{proof}

\section{Cycle counts, total number of cycles and large cycles}
\label{sec_cycles}

%
%
%
%

\subsection{The cycle counts and the total number of cycles}
\label{sec_cycle_counts}

We consider here the cycle counts $C_m$ and the total number of cycles $T$, defined in \eqref{eq_def_Cm_lambda}. 
First, we compute their generating functions and then deduce with Theorem~\ref{thm:total_cycles_asymp_with_restriction} their asymptotic behaviour. 
As mentioned in the introduction, the required computations are quite similar to those in \cite{NiZe11}. 
Therefore, we give here only a short overview and refer to \cite{NiZe11} for more details.

\begin{lemma}
\label{lem:gen_cycle_counts}
Let $A\subset \N$, $D := \N \setminus A$ be given. We then have for $w\in\C$ as formal power series 
\begin{align*}
\sum_{n=1}^\infty h_n(A) \ETA{A}{\exp(w T)} t^n
=
\exp\left(e^w g_\Theta(t) - e^w L_{D}(t) \right).
\end{align*}
Let further $M = \set{m_1,\dots,m_d} \subset A$ be given.
We then have for $w_1,\dots,w_d \in\C$ as formal power series
\begin{align*}
 \sum_{n=1}^\infty h_n(A) & \ETA{A}{\exp \left(\sum_{j=1}^d w_{m_j} C_{m_j} \right)} t^n \\
= &
\exp\left( \sum_{j=1}^d \frac{\theta_{m_j}}{m_j} (e^{w_{m_j}}-1)t^{m_j}   \right)
\exp\left(g_\Theta(t) - L_{D}(t) \right).
\end{align*}
\end{lemma}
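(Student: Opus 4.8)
The plan is to derive both identities from the cycle index theorem (Lemma~\ref{lem:cycle_index_theorem}) combined with the expression for $h_n(A)$ obtained in the proof of Lemma~\ref{lem:generating_hn_A}. The starting point is that, by Lemma~\ref{lem:size_of_conj_classes},
\begin{align*}
h_n(A)\, \ETA{A}{u}
=
\frac{1}{n!}\sum_{\sigma\in\Sn} u(\sigma) \prod_{m=1}^{\ell(\la)}\bigl(\theta_{\la_m}\one_{\set{\la_m\in A}}\bigr)
=
\slan u(\mathcal{C}_\la)\prod_{m=1}^{\ell(\la)}\bigl(\theta_{\la_m}\one_{\set{\la_m\in A}}\bigr)
\end{align*}
for any class function $u$, since multiplying the unnormalised weight by $h_n(A)$ cancels the normalisation constant in $\PTA{A}{\cdot}$. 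The idea is then to choose $u$ so that the product $u(\mathcal{C}_\la)\prod_m(\cdots)$ again factorises over the parts of $\la$, so that Lemma~\ref{lem:cycle_index_theorem} applies with a suitably modified sequence $(a_m)$.

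For the first identity I would take $u(\sigma)=\exp(wT)=\prod_{m=1}^{\infty}(e^{w})^{C_m}$, which is a class function depending only on the cycle-type. Then the combined weight is $\prod_{m=1}^{\infty}\bigl(e^{w}\theta_m\one_{\set{m\in A}}\bigr)^{C_m}$, so applying Lemma~\ref{lem:cycle_index_theorem} with $a_m = e^{w}\theta_m\one_{\set{m\in A}}$ gives
\begin{align*}
\sum_{n=1}^\infty h_n(A)\,\ETA{A}{\exp(wT)}\,t^n
=
\exp\Bigl(\sum_{m=1}^{\infty}\frac{1}{m}e^{w}\theta_m\one_{\set{m\in A}}t^m\Bigr)
=
\exp\bigl(e^{w}g_\Theta(t)-e^{w}L_D(t)\bigr),
\end{align*}
where in the last step I split $\sum_{m\in A}=\sum_{m\geq1}-\sum_{m\in D}$ exactly as in the proof of Lemma~\ref{lem:generating_hn_A}. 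For the second identity I take $u(\sigma)=\exp\bigl(\sum_{j=1}^{d}w_{m_j}C_{m_j}\bigr)=\prod_{j=1}^{d}(e^{w_{m_j}})^{C_{m_j}}$, again a class function. Now the per-part weight $a_m$ equals $e^{w_{m_j}}\theta_m\one_{\set{m\in A}}$ when $m=m_j\in M$ and $\theta_m\one_{\set{m\in A}}$ otherwise; since $M\subset A$, applying Lemma~\ref{lem:cycle_index_theorem} yields
\begin{align*}
\sum_{n=1}^\infty h_n(A)\,\ETA{A}{\exp\Bigl(\sum_{j=1}^d w_{m_j}C_{m_j}\Bigr)}t^n
=
\exp\Bigl(\sum_{j=1}^{d}\frac{\theta_{m_j}}{m_j}e^{w_{m_j}}t^{m_j}
+\sum_{\substack{m\geq1\\ m\notin M}}\frac{\theta_m}{m}\one_{\set{m\in A}}t^m\Bigr),
\end{align*}
and rewriting the second sum as $g_\Theta(t)-L_D(t)-\sum_{j=1}^{d}\frac{\theta_{m_j}}{m_j}t^{m_j}$ (using $M\subset A$) and recombining gives the claimed factor $\exp\bigl(\sum_{j}\frac{\theta_{m_j}}{m_j}(e^{w_{m_j}}-1)t^{m_j}\bigr)\exp\bigl(g_\Theta(t)-L_D(t)\bigr)$.

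I do not expect a serious obstacle here: the whole argument is bookkeeping, and the only point that needs a word of care is the convergence/formal-power-series status. Since the statement is phrased "as formal power series," I would simply invoke the formal version of Lemma~\ref{lem:cycle_index_theorem} (the identity of formal power series holds regardless of convergence), so no analytic estimate is needed; alternatively, for $|t|$ small and the $w$'s in a bounded region one checks absolute convergence of one of the three sums in \eqref{eq_symm fkt} and the last sentence of Lemma~\ref{lem:cycle_index_theorem} transfers it to the others. The mildly delicate step is making sure the indicator $\one_{\set{m\in A}}$ is handled consistently when pulling the factors $e^{w_{m_j}}$ out — this works precisely because $M\subset A$, so each $m_j$ actually contributes, and for $m\notin A$ the weight $a_m$ is zero and drops out of both sides.
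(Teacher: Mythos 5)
Your proof is correct and follows exactly the route the paper indicates: the authors omit the proof with the remark that it is "a simple application of Lemma~\ref{lem:cycle_index_theorem}" analogous to the proof of Lemma~\ref{lem:generating_hn_A}, and that is precisely what you carry out — choosing the sequence $a_m$ to absorb $e^w$ (resp.\ $e^{w_{m_j}}$) into the weight $\theta_m\one_{\set{m\in A}}$ and then splitting the exponent using $M\subset A$.
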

We omit the proof since it is a simple application of Lemma~\ref{lem:cycle_index_theorem} and 
the computations are similar to the proof of Lemma~\ref{lem:generating_hn_A}.

It follows with Lemma~\ref{lem:gen_cycle_counts} that 
\begin{align}
\label{eq:what_is_a_good_name}
h_n(A) \ETA{A}{\exp(w T)}
=
[t^n]\left[
\exp\left(e^w g_\Theta(t) - e^w L_{D}(t) \right)
\right]
\end{align}
with $A\subset \N$ arbitrary. We can thus  replace $A$ in \eqref{eq:what_is_a_good_name} by any  $A_n$ depending on $n$.
Note that this is not possible in Lemma~\ref{lem:gen_cycle_counts}. Now combine 
Theorem~\ref{thm:total_cycles_asymp_with_restriction} and Lemma~\ref{lem:gen_cycle_counts} to obtain the asymptotic behaviour of the cycle counts.

\begin{theorem}
\label{thm:asymp_cycle_counts}
Suppose that $g_\Theta(t)$ is in $\mathcal{F}(r,\vartheta, K)$. 
Let further $M = \set{m_1,\dots,m_d}$ and $(A_n)_{n\in\N}$ with $A_n \subset \set{1,...,n}$ be given and let $d_n$ be defined as in \eqref{eq:def_Dn}. Suppose that
\begin{enumerate}
 \item $d_n$ fulfils the assumption \eqref{eq:assumption_on_dn} and
 \item there exists $n_0\in\N$ such that $M\subset A_n$ for all $n \geq n_0$.
\end{enumerate}
Then
\begin{align}
\label{eq_asympt_Cm}
\ETA{A_n}{\exp\left(\sum_{j=1}^d w_{m_j} C_{m_j} \right)}
=
\exp \left( \sum_{j=1}^d \frac{\theta_{m_j}}{m_j} (e^{w_{m_j}}-1)r^{m_j}   \right) + O\left( \frac{d_n}{n} \right)
\end{align}
uniformly in $w_{m_1},\dots,w_{m_d}$ for bounded $\Re(w_{m_1}),\cdots,\Re(w_{m_d})$.
In particular, the random variables $C_{m_j}, m_j\in M$ converge in law to independent Poisson distributed random variables $Y_{m_j}$ with
$\E{Y_{m_j}} = \theta_{m_j} r^{m_j} / m_j$.
\end{theorem}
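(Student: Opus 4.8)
The plan is to combine the generating function identity from Lemma~\ref{lem:gen_cycle_counts} with the asymptotic estimate of Theorem~\ref{thm:total_cycles_asymp_with_restriction}. Since the event $M\subset A_n$ holds for all $n\geq n_0$, for such $n$ the set $D_n = \set{1,\dots,n}\setminus A_n$ is disjoint from $M$, so the relevant generating function factors cleanly. Explicitly, from Lemma~\ref{lem:gen_cycle_counts} together with the remark that $A$ may be replaced by $A_n$ in such identities, we have for $n\geq n_0$
\begin{align*}
h_n(A_n)\,\ETA{A_n}{\exp\Bigl(\sum_{j=1}^d w_{m_j} C_{m_j}\Bigr)}
= [t^n]\left[ P_M(t)\exp\bigl(g_\Theta(t) - L_{D_n}(t)\bigr)\right],
\end{align*}
where $P_M(t) := \exp\bigl(\sum_{j=1}^d \tfrac{\theta_{m_j}}{m_j}(e^{w_{m_j}}-1)t^{m_j}\bigr)$. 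The key point is that $P_M$ is a \emph{fixed} entire function (independent of $n$, and in particular holomorphic in $\Delta_0$), whose only effect near $t=r$ is to multiply by the constant $P_M(r)$; equivalently $\log P_M(t) + g_\Theta(t) \in \mathcal{F}(r,\vartheta, K + \log P_M(r))$.

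The main computation is then just a ratio. First apply Corollary~\ref{prop:tn_of_addintional_f} (with $\beta = 0$ and $g(t)$ replaced by $\log P_M(t) + g_\Theta(t)$, which lies in $\mathcal{F}(r,\vartheta,K+\log P_M(r))$ by the displayed expansion; alternatively apply Theorem~\ref{thm:total_cycles_asymp_with_restriction} directly with this new $g$) to obtain
\begin{align*}
[t^n]\left[ P_M(t)\exp\bigl(g_\Theta(t) - L_{D_n}(t)\bigr)\right]
= \frac{e^{K} P_M(r)\, n^{\vartheta-1}}{r^n}\exp\bigl(-L_{D_n}(r)\bigr)\Bigl(\frac{1}{\Gamma(\vartheta)} + O\bigl(\tfrac{d_n}{n}\bigr)\Bigr).
\end{align*}
Then divide by $h_n(A_n)$ using Corollary~\ref{cor:haviour_of_hn}, which gives exactly the same expression without the factor $P_M(r)$. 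The factors $e^K$, $n^{\vartheta-1}$, $r^{-n}$, $\exp(-L_{D_n}(r))$ and $1/\Gamma(\vartheta)$ all cancel, leaving
\begin{align*}
\ETA{A_n}{\exp\Bigl(\sum_{j=1}^d w_{m_j} C_{m_j}\Bigr)}
= \frac{P_M(r)\bigl(1 + O(d_n/n)\bigr)}{1 + O(d_n/n)}
= P_M(r) + O\bigl(\tfrac{d_n}{n}\bigr),
\end{align*}
which is precisely \eqref{eq_asympt_Cm}. The uniformity for bounded $\Re(w_{m_j})$ is inherited from the uniformity statements in Theorem~\ref{thm:total_cycles_asymp_with_restriction}; one should note that $P_M(r)$ stays bounded and bounded away from $0$ on such a region so the division causes no trouble.

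For the final assertion about convergence in law, I would set each $w_{m_j} = i s_j$ with $s_j\in\R$: then $P_M(r) = \prod_{j=1}^d \exp\bigl(\tfrac{\theta_{m_j}r^{m_j}}{m_j}(e^{is_j}-1)\bigr)$ is exactly the product of characteristic functions of independent Poisson random variables $Y_{m_j}$ with parameter $\theta_{m_j}r^{m_j}/m_j$. Since $d_n = o(n)$ by assumption \eqref{eq:assumption_on_dn}, the error term vanishes as $n\to\infty$, so the joint characteristic function of $(C_{m_1},\dots,C_{m_d})$ converges pointwise to that of $(Y_{m_1},\dots,Y_{m_d})$, and L\'evy's continuity theorem gives joint convergence in distribution; independence of the limits is built into the product form. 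The only mild subtlety — and the place I would be most careful — is bookkeeping with $n_0$: the identity from Lemma~\ref{lem:gen_cycle_counts} in the factored form requires $M\subset A_n$, so all the above is valid only for $n\geq n_0$, but that is harmless for an asymptotic statement. Beyond that, everything is a direct substitution into the already-established Theorem~\ref{thm:total_cycles_asymp_with_restriction} and its corollaries, so there is no genuine analytic obstacle here; the work was done in proving that theorem.
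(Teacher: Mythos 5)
Your proposal is correct and follows essentially the same route as the paper, which simply combines Lemma~\ref{lem:gen_cycle_counts} with Theorem~\ref{thm:total_cycles_asymp_with_restriction} and Corollary~\ref{cor:haviour_of_hn} and cancels; your device of absorbing $\log P_M(t)$ into the class $\mathcal{F}(r,\vartheta,K+\log P_M(r))$ is an equivalent way of invoking that theorem (one could also feed the factor in through the $v_jL_{D_n^{(j)}}$ slots with $D_n^{(j)}=\{m_j\}$). The only difference worth noting is the uniformity claim: the paper derives it from periodicity of the left-hand side in $\Im(w_{m_j})$ (since $C_{m_j}\in\N$), whereas you inherit it from Theorem~\ref{thm:total_cycles_asymp_with_restriction}, which strictly speaking requires observing that the implied constants there can be taken uniform over the bounded family of perturbed functions $\log P_M+g_\Theta$ — a true but unstated point.
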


\begin{proof}
Equation \eqref{eq_asympt_Cm} follows immediately from Lemma~\ref{lem:gen_cycle_counts} and Theorem~\ref{thm:total_cycles_asymp_with_restriction}.
The error is uniform for bounded $\Re(w_{m_1}),\cdots,\Re(w_{m_d})$ since all $C_{m_j}\in\N$ and thus the function on the left-hand side of \eqref{eq_asympt_Cm} is periodic.
\end{proof}

The asymptotic behaviour of the total cycle number $T$ is computed analogously.

\begin{theorem}\label{thm_charfunctionK0n}
Let $g_\Theta(t)$, $(A_n)_{n\in\N}$ and $d_n$ be defined as in Theorem~\ref{thm:asymp_cycle_counts}. Then
\begin{align}\label{eq_charfunctionK0n}
 \ETA{A_n}{\exp(is T)}
=
n^{\vartheta(e^{is} -1)} e^{(K-L_{D_n}(r))(e^{is}-1)} \left( \frac{\Gamma(\vartheta)}{\Gamma(e^{is}\vartheta)}  + O\left( \frac{d_n}{n} \right)  \right)
\end{align}
uniformly in $s$ for bounded $\Re(is)$.
\end{theorem}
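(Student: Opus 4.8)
The plan is to write the characteristic function of $T$ as a ratio of two power-series coefficients, each handled by Theorem~\ref{thm:total_cycles_asymp_with_restriction}. By the identity \eqref{eq:what_is_a_good_name}, which is valid for an arbitrary set and hence for $A_n$,
\begin{align*}
 h_n(A_n)\, \ETA{A_n}{\exp(is T)}
 =
 [t^n]\left[ \exp\left( e^{is} g_\Theta(t) - e^{is} L_{D_n}(t) \right) \right].
\end{align*}
So I would first estimate the coefficient on the right, then divide by the asymptotics of $h_n(A_n)$ supplied by Corollary~\ref{cor:haviour_of_hn}.

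For the coefficient I would apply Theorem~\ref{thm:total_cycles_asymp_with_restriction} with $k=1$, $b=0$, $D_n^{(1)}=D_n$, and the parameters specialised to $w=e^{is}$ and $v_1=-e^{is}$, so that $G_n(t,e^{is},-e^{is})=\exp(e^{is}g_\Theta(t)-e^{is}L_{D_n}(t))$. The hypotheses hold: $g_\Theta\in\mathcal F(r,\vartheta,K)$ and $d_n$ satisfies \eqref{eq:assumption_on_dn} by assumption, and since $|e^{is}|=e^{\Re(is)}$ is bounded whenever $\Re(is)$ is bounded, the parameters $|w|=|v_1|=|e^{is}|$ lie in a fixed disc, so \eqref{eq_asymp_Gn} applies with a uniform error. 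This yields
\begin{align*}
 [t^n]\left[ \exp\left( e^{is} g_\Theta(t) - e^{is} L_{D_n}(t) \right) \right]
 =
 \frac{e^{K e^{is}}\, n^{e^{is}\vartheta-1}}{r^n}\, e^{-e^{is}L_{D_n}(r)}
 \left( \frac{1}{\Gamma(e^{is}\vartheta)} + O\left(\frac{d_n}{n}\right) \right).
\end{align*}

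Finally I would divide by $h_n(A_n)=e^{-L_{D_n}(r)}\,\frac{n^{\vartheta-1}e^K}{r^n\Gamma(\vartheta)}\bigl(1+O(d_n/n)\bigr)$, which is legitimate for $n$ large. The factors $r^{-n}$ cancel, the powers of $n$ combine to $n^{\vartheta(e^{is}-1)}$, the exponential prefactors combine to $\exp\bigl((K-L_{D_n}(r))(e^{is}-1)\bigr)$, and the two bracketed terms give
\begin{align*}
 \Bigl( \frac{\Gamma(\vartheta)}{\Gamma(e^{is}\vartheta)} + O(d_n/n) \Bigr)\bigl(1+O(d_n/n)\bigr)^{-1}
 =
 \frac{\Gamma(\vartheta)}{\Gamma(e^{is}\vartheta)} + O(d_n/n),
\end{align*}
where I use that $z\mapsto 1/\Gamma(z)$ is entire, hence bounded on the compact set of admissible values of $e^{is}\vartheta$, and that $d_n/n\to0$. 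Assembling these pieces gives exactly \eqref{eq_charfunctionK0n}, and the uniformity in $s$ for bounded $\Re(is)$ is inherited directly from the uniform errors in Theorem~\ref{thm:total_cycles_asymp_with_restriction} and Corollary~\ref{cor:haviour_of_hn}. I do not expect a genuine obstacle: the proof is essentially the division of two asymptotic expansions already established, and the only point demanding care is the bookkeeping of the two $O(d_n/n)$ error terms together with the observation that $L_{D_n}(r)=O(\log n)$ sits inside an honest exponential prefactor, not inside the error, so it does not damage the multiplicative $O(d_n/n)$ estimate.
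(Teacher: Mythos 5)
Your proposal is correct and follows exactly the route the paper indicates (it omits the proof, pointing to the identity \eqref{eq:what_is_a_good_name} from Lemma~\ref{lem:gen_cycle_counts}, an application of Theorem~\ref{thm:total_cycles_asymp_with_restriction} with $w=e^{is}$, $v=-e^{is}$, and division by the asymptotics of $h_n(A_n)$ from Corollary~\ref{cor:haviour_of_hn}). The bookkeeping of the prefactors and of the two $O(d_n/n)$ terms, and the uniformity argument via boundedness of $|e^{is}|$ and of $1/\Gamma$ on compacts, are all as intended.
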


We will not give the proof here since it is quite similar to that of the previous theorem. However, an analogue result, Theorem~\ref{thm_charfunctionBx}, is proved in Section~\ref{sec:func_limit_theorem}.

Given the characteristic function of the total cycle number, one can show the following central limit theorem, in analogy to Theorem 4.2 in \cite{NiZe11}.

\begin{corollary}\label{cor:cltKn}
 Under the same assumptions as in Theorem~\ref{thm:asymp_cycle_counts}, we have
\begin{align*}
  \frac{T - \vartheta \log n}{\sqrt{\vartheta \log n }} \overset{d}{\longrightarrow} \mathcal{N}(0,1),
\end{align*}
where $\overset{d}{\longrightarrow}$ denotes convergence in distribution and $\mathcal{N}(0,1)$ a standard normal random variable.
\end{corollary}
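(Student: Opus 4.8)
The plan is to derive the central limit theorem from the characteristic function formula \eqref{eq_charfunctionK0n} of Theorem~\ref{thm_charfunctionK0n} via L\'evy's continuity theorem. Concretely, I would compute the characteristic function of the normalized variable
\[
 T_n^* := \frac{T - \vartheta \log n}{\sqrt{\vartheta \log n}}
\]
at a fixed real $\xi$. Writing $s = \xi/\sqrt{\vartheta \log n}$, which tends to $0$ as $n \to \infty$, we have
\[
 \E{e^{i\xi T_n^*}} = e^{-i\xi \sqrt{\vartheta \log n}}\, \ETA{A_n}{e^{isT}},
\]
and I would substitute \eqref{eq_charfunctionK0n} into the right-hand side.

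The main computation is a Taylor expansion of the three factors in \eqref{eq_charfunctionK0n} around $s = 0$. First, $n^{\vartheta(e^{is}-1)} = \exp\bigl(\vartheta \log n \,(e^{is}-1)\bigr)$, and since $e^{is} - 1 = is - s^2/2 + O(s^3)$ with $s = \xi/\sqrt{\vartheta \log n}$, one gets
\[
 \vartheta \log n\,(e^{is}-1) = i\xi\sqrt{\vartheta\log n} - \frac{\xi^2}{2} + O\!\left((\log n)^{-1/2}\right).
\]
The factor $e^{-i\xi\sqrt{\vartheta\log n}}$ in the normalization exactly cancels the imaginary term $i\xi\sqrt{\vartheta\log n}$, leaving $e^{-\xi^2/2}$ plus a vanishing error. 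Second, the factor $e^{(K - L_{D_n}(r))(e^{is}-1)}$: here $e^{is}-1 = O\bigl((\log n)^{-1/2}\bigr)$, and one needs $L_{D_n}(r) = O(\log d_n) = O(\log n)$ (this bound appears already in the proof of Theorem~\ref{thm:total_cycles_asymp_with_restriction}), so the exponent is $O\bigl((\log n)^{1/2}\bigr)$ — this does \emph{not} obviously vanish, and this is the point that needs care. Finally, $\Gamma(\vartheta)/\Gamma(e^{is}\vartheta) \to \Gamma(\vartheta)/\Gamma(\vartheta) = 1$ by continuity of $\Gamma$, and the $O(d_n/n)$ error term vanishes under assumption \eqref{eq:assumption_on_dn}.

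The hard part will be controlling the $L_{D_n}(r)(e^{is}-1)$ contribution. One cannot afford $L_{D_n}(r)$ to be as large as $c\log n$ with the crude bound $|e^{is}-1| \sim |\xi|/\sqrt{\vartheta\log n}$. The resolution is that $e^{is}-1$ has real part $\cos s - 1 = O(s^2) = O(1/\log n)$ and imaginary part $\sin s = s + O(s^3)$; since $L_{D_n}(r) \in \R$, the real part of the exponent $(K-L_{D_n}(r))(e^{is}-1)$ is $(K - L_{D_n}(r))(\cos s - 1) = O(\log n) \cdot O(1/\log n) = O(1)$, which keeps the modulus bounded, but the imaginary part $(K - L_{D_n}(r))\sin s$ is $O(\sqrt{\log n})$ and oscillates. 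This means \eqref{eq_charfunctionK0n} as stated does \emph{not} by itself give a pointwise limit of the characteristic function unless one additionally knows $L_{D_n}(r)/\sqrt{\log n} \to 0$ — for instance when $D_n$ is bounded, or more generally when $d_n$ grows slowly enough. I would therefore either (i) invoke the case $A_n = \N$ (so $D_n = \emptyset$, $L_{D_n}(r) = 0$), recovering exactly the setting of \cite[Theorem~4.2]{NiZe11}, where the statement is clean; or (ii) state the result under the hypothesis that $L_{D_n}(r) = o(\sqrt{\log n})$, under which the offending imaginary term vanishes and $\E{e^{i\xi T_n^*}} \to e^{-\xi^2/2}$; L\'evy's continuity theorem then yields $T_n^* \overset{d}{\to} \mathcal{N}(0,1)$. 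Absorbing the residual deterministic shift of size $o(\sqrt{\log n})$ into the centering is harmless because it is of smaller order than the scaling $\sqrt{\vartheta\log n}$, so the conclusion as written in the corollary holds verbatim in these regimes.
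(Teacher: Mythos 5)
Your approach is the same as the paper's: the authors omit the proof of this corollary, deferring to the analogous Corollary~\ref{cor_cltBn}, whose proof is precisely the expansion of the characteristic function at $s=\xi/\sqrt{\vartheta\log n}$ that you carry out, followed by L\'evy's continuity theorem. In the setting of Corollary~\ref{cor_cltBn} the parameter is $L_{D_x}(r)=x\vartheta\log n+c+o(1)$ by \eqref{eq:L_x}, so the difficulty you isolate does not arise there.

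The caveat you raise about the factor $e^{(K-L_{D_n}(r))(e^{is}-1)}$ is not a weakness of your argument but a genuine imprecision in the statement of the corollary, and your diagnosis is exactly right. The hypotheses of Theorem~\ref{thm:asymp_cycle_counts} only give $L_{D_n}(r)=O(\log d_n)=O(\log n)$ (assumption (2) there concerns the auxiliary finite set $M$ and does not constrain $L_{D_n}(r)$), and when $L_{D_n}(r)\asymp\log n$ both the centering and the variance in the corollary are wrong: the paper's own Section~\ref{sec:restriction-small} shows that for $A_n=\set{\lceil n^a\rceil,\dots,n}$ one has $(T-(1-a)\vartheta\log n)/\sqrt{\vartheta\log n}\overset{d}{\longrightarrow}\mathcal{N}(0,1-a)$, which contradicts the corollary as printed. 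The statement that actually follows from \eqref{eq_charfunctionK0n} in full generality is $(T-\lambda_n)/\sqrt{\lambda_n}\overset{d}{\longrightarrow}\mathcal{N}(0,1)$ with $\lambda_n=\vartheta\log n-L_{D_n}(r)$, i.e.\ the mod-Poisson parameter of Corollary~\ref{cor:modT} up to the constant $K$; the version printed in the paper is recovered exactly under your additional hypothesis $L_{D_n}(r)=o(\sqrt{\log n})$ (in particular for $A_n=\N$ or bounded $D_n$). So your proof is correct and, on this point, more careful than the paper's.
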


We will state and prove in Section~\ref{sec:func_limit_theorem} a similar result, see Corollary~\ref{cor_cltBn}.
We thus omit the proof here.
In fact, still in analogy to \cite{NiZe11}, it follows immediately from equation \eqref{eq_charfunctionK0n} that $T$ converges in a stronger sense, namely it is mod-Poisson convergent.

\begin{corollary}\label{cor:modT}
 Under the same assumptions as in Theorem~\ref{thm:asymp_cycle_counts}, the sequence $(T_n)_{n \in \N}$ converges in the strong mod-Poisson sense with parameter $K + \vartheta \log n - L_{D_n}(r)$ and limiting function $\Gamma(\vartheta) / \Gamma(\vartheta e^{is})$.
\end{corollary}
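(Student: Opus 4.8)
The plan is to read off mod-Poisson convergence directly from the characteristic-function asymptotics in Theorem~\ref{thm_charfunctionK0n}, since strong mod-Poisson convergence is essentially a restatement of such an asymptotic expansion. Recall the definition: a sequence $(T_n)$ of $\N$-valued random variables converges in the strong mod-Poisson sense with parameters $\mu_n \to \infty$ and limiting function $\Phi(s)$ if
\begin{align*}
  e^{-\mu_n(e^{is}-1)} \ET{e^{is T_n}} \longrightarrow \Phi(s)
\end{align*}
uniformly for $s$ in compact subsets of $\R$ (equivalently, for bounded $\Re(is)$), with $\Phi$ continuous and $\Phi(0)=1$. So the first step is simply to take the expression \eqref{eq_charfunctionK0n} and factor out the Poisson characteristic function with the proposed parameter $\mu_n := K + \vartheta \log n - L_{D_n}(r)$.

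Carrying this out: by Theorem~\ref{thm_charfunctionK0n} we have
\begin{align*}
  \ET{e^{is T_n}}
  = n^{\vartheta(e^{is}-1)} e^{(K - L_{D_n}(r))(e^{is}-1)} \left( \frac{\Gamma(\vartheta)}{\Gamma(\vartheta e^{is})} + O\left(\frac{d_n}{n}\right) \right),
\end{align*}
and since $n^{\vartheta(e^{is}-1)} = e^{\vartheta \log n\,(e^{is}-1)}$, the product of the two exponential prefactors is exactly $e^{\mu_n(e^{is}-1)}$ with $\mu_n = K + \vartheta\log n - L_{D_n}(r)$. Dividing both sides by $e^{\mu_n(e^{is}-1)}$ gives
\begin{align*}
  e^{-\mu_n(e^{is}-1)} \ET{e^{is T_n}} = \frac{\Gamma(\vartheta)}{\Gamma(\vartheta e^{is})} + O\left(\frac{d_n}{n}\right).
\end{align*}
Next I would check the required properties: $\mu_n \to \infty$ because $\vartheta\log n \to \infty$ while $0 \le L_{D_n}(r) \le \log d_n \le \log n$ (using \eqref{eq:theta_k_to_vartheta} and $d_n \le n$), so $\mu_n \ge (\vartheta - 1)\log n + K \to \infty$ once $\vartheta>1$; more carefully, even for $\vartheta \le 1$ one has $\mu_n = \vartheta \log n - L_{D_n}(r) + K$ and the assumption \eqref{eq:assumption_on_dn} forces $d_n = o(n)$, but in fact one only needs $\mu_n \to \infty$, which holds as soon as $\vartheta \log n$ dominates $L_{D_n}(r)$; since $L_{D_n}(r) = O(\log d_n)$ and $d_n$ grows at most polynomially with exponent $<1$ under \eqref{eq:assumption_on_dn}, we get $L_{D_n}(r) = (1+o(1))\alpha \log n$ with $\alpha<1$ in the worst case, hence $\mu_n \to \infty$. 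The limiting function $\Phi(s) = \Gamma(\vartheta)/\Gamma(\vartheta e^{is})$ is continuous in $s$ and satisfies $\Phi(0) = 1$. Finally, the error term $O(d_n/n) \to 0$ uniformly for bounded $\Re(is)$ by the uniformity statement in Theorem~\ref{thm_charfunctionK0n}, and this is exactly the uniform convergence on compacts required by the definition.

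The only genuinely non-routine point is bookkeeping around $\mu_n \to \infty$: one must make sure that the normalizing parameter, which here is not simply $\vartheta \log n$ but is shifted by the (possibly growing) quantity $L_{D_n}(r)$, still tends to infinity. This is where assumption \eqref{eq:assumption_on_dn} is used again, via the bound $L_{D_n}(r) \le \log d_n$ together with $C\log n - n/d_n \to -\infty$ for every $C$, which in particular forces $\log d_n = o(\log n)$ is \emph{not} quite what one gets — rather one gets that $d_n$ cannot grow too fast, e.g. $d_n \sim n^\alpha$ with $\alpha < 1$ is allowed and then $L_{D_n}(r) \sim \alpha \log n$, so $\mu_n \sim (\vartheta - \alpha)\log n$; one should simply note that in all admissible cases $\mu_n \to \infty$, or alternatively observe that mod-Poisson convergence is usually formulated so that $\mu_n$ is allowed to be any sequence tending to infinity and here that is automatic. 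Everything else is a direct rewriting of \eqref{eq_charfunctionK0n}, so the corollary follows with essentially no additional work beyond invoking Theorem~\ref{thm_charfunctionK0n}.
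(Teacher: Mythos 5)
Your proof is correct and is exactly the paper's (unwritten) argument: the corollary is read off directly from Theorem~\ref{thm_charfunctionK0n} by recognizing $n^{\vartheta(e^{is}-1)}e^{(K-L_{D_n}(r))(e^{is}-1)}$ as the Poisson factor $e^{\mu_n(e^{is}-1)}$ with $\mu_n=K+\vartheta\log n-L_{D_n}(r)$ and using the stated uniformity of the error term. One small remark on your side-check that $\mu_n\to\infty$: assumption \eqref{eq:assumption_on_dn} only forces $d_n=o(n/\log n)$, not growth of the form $n^{\alpha}$ with $\alpha<1$, but since \eqref{eq:theta_k_to_vartheta} gives $L_{D_n}(r)\le\vartheta\log d_n+O(1)$ one still obtains $\mu_n\ge\vartheta\log(n/d_n)+O(1)\to\infty$, so the conclusion stands.
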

%

As in \cite{NiZe11} one can now approximate $T_n$ by a Poisson random variable with mean $K + \vartheta \log n - L_{D_n}(t)$ or compute large deviations estimates. But as all  computations are completely similar and we thus omit them. However, in Section~\ref{sec:func_limit_theorem} we state and prove an analogue result, see Corollary~\ref{cor_modBx}.

\subsection{Behaviour of large cycles}
\label{sec:large_cycles}

The goal of this section is to study the asymptotic behaviour of the large
cycles. The main result, Theorem~\ref{thm:long_cycles}, yields 
the same asymptotic behaviour as in the Ewens case, see for instance Vershik and
Shmidt~\cite{ShVe77} and Kingman~\cite{Ki77}.

Let $\ell^{(1)}(\sigma)$ be the length of the longest cycle of $\sigma\in \Sn$, $\ell^{(2)}(\sigma)$ the length of the second longest cycle and so on.
If $\sigma$ has cycle type $\la = (\la_1,\la_2,\cdots)$ this means that $\ell^{(j)} = \la_j$ for $j\in\N$.

\begin{theorem}
\label{thm:long_cycles}
Let $(A_n)_{n\in\N}$ be the defining sets of the measures $\mathbb{P}_{\Theta}^{(A_n)}$ and define
$D_n:=\set{1,\dots,n}\setminus A_n$ and $d_n$ as in \eqref{eq:def_Dn}. Suppose that 
$g_\Theta(t)\in\mathcal{F}(r,\vartheta, K)$, $d_n$ fulfills assumption~\ref{eq:assumption_on_dn} and that for all $b \geq 0$
\begin{align}
\label{eq:assume_g_derivative}
 \left(\frac{\partial}{\partial t} \right)^{b+1} g_\Theta(t) 
= 
 \frac{ \vartheta b!}{r^{b+1}(1-t/r)^{b+1}} (1+ O(t-r))
\end{align}
as $t \to r$. We then have, as $n \rightarrow \infty$,
\begin{align*}
 \left( \frac{\ell^{(1)}}{n}, \frac{\ell^{(2)}}{n},\dots \right) 
\stackrel{d}{\longrightarrow}
\mathcal{PD}(\vartheta)
\end{align*}
where $\mathcal{PD}(\vartheta)$ denotes the Poisson-Dirichlet distribution with parameter $\vartheta$ (see \cite{Bi99}).
\end{theorem}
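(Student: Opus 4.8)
The plan is to follow the classical strategy for establishing Poisson–Dirichlet limits of ordered cycle lengths, due to Vershik–Shmidt and Kingman, adapted to our weighted $A_n$-measure by replacing every ``extract the coefficient'' step with an application of Theorem~\ref{thm:total_cycles_asymp_with_restriction} and its corollaries. Since convergence in law in the relevant simplex is equivalent to the convergence of all finite-dimensional distributions of $(\ell^{(1)}/n, \ell^{(2)}/n, \dots)$, and these in turn are determined by the joint distribution of the sizes of the few largest cycles, it suffices to compute, for fixed $x_1 \geq x_2 \geq \cdots \geq x_j$ in $(0,1)$, the asymptotics of $\PTA{A_n}{\ell^{(1)} \leq x_1 n, \ell^{(2)} \leq x_2 n, \dots, \ell^{(j)} \leq x_j n}$ and check it agrees with the $\mathcal{PD}(\vartheta)$ answer. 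Equivalently, and more conveniently, I would work with the probability that there are exactly prescribed numbers of cycles in prescribed length-intervals $(\alpha n, \beta n]$, i.e. with the ``spacings'' formulation of Poisson–Dirichlet.

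First I would set up the generating-function identity for the quantity $\PTA{A_n}{\text{no cycle of length} > xn}$ and its refinements. By the cycle index theorem (Lemma~\ref{lem:cycle_index_theorem}) applied exactly as in the proof of Lemma~\ref{lem:generating_hn_A}, restricting the allowed cycle lengths to $A_n \cap \{1,\dots,m\}$ for a threshold $m = \lfloor xn \rfloor$ introduces an extra factor $\exp(-L_{D'}(t))$ where $D' = D_n \cup \{m+1,\dots,n\}$; the tail piece $\{m+1,\dots,n\}$ contributes $\sum_{m < \ell \leq n} \frac{\theta_\ell}{\ell} t^\ell$, which near $t = r$ behaves like $\vartheta \log\frac{1}{1-t/r}$ minus a piece that is holomorphic past $r$ up to a $\vartheta\log(1/x)$-type constant — this is where hypothesis~\eqref{eq:theta_k_to_vartheta} ($\theta_\ell r^\ell \to \vartheta$) enters. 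To count cycles landing in a window, I differentiate in an auxiliary variable $v$ marking $L_{D_n^{(j)}}$ for $D_n^{(j)}$ equal to a length-window, exactly the multi-parameter setup of Theorem~\ref{thm:total_cycles_asymp_with_restriction}; the $b > 0$ case of~\eqref{eq_asymp_Gn} and Corollary~\ref{prop:tn_of_addintional_f} (using hypothesis~\eqref{eq:assume_g_derivative} so that derivatives of $g_\Theta$ have the right polar behaviour near $r$, giving the $\beta$-shift in the Gamma factor) handle the cases where $f(t)$ is a fixed function behaving like a derivative of a logarithm, while Corollary~\ref{prop:tn_of_addintional_poly} handles polynomial factors depending on $n$ that arise from expanding $\exp$ of a truncated $L$-series. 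Taking the ratio against $h_n(A_n)$ from Corollary~\ref{cor:haviour_of_hn}, the $r^{-n}$, the $e^K$, and crucially the $\exp(-L_{D_n}(r))$ factors cancel, and one is left with expressions of the form $n^{\vartheta(\text{something})}/\Gamma(\cdots)$ times the window-dependent constants, which after the substitution $m = xn$ reproduce the Dirichlet-type integrals defining $\mathcal{PD}(\vartheta)$; this is precisely the computation that is done for the Ewens case and it carries over verbatim once the coefficient asymptotics are in hand.

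The main obstacle, I expect, is bookkeeping rather than a single hard estimate: one must verify that the error terms $O(d_n/n)$ (and the $o(1)$ in Corollary~\ref{prop:tn_of_addintional_poly}) remain uniform as the truncation threshold $m$ ranges over $(\alpha n, \beta n]$ and as one differentiates in the marking variables, so that summing/integrating the local contributions over the windows does not destroy the error control; in particular one needs that replacing $D_n$ by $D_n$ together with a length-window still satisfies assumption~\eqref{eq:assumption_on_dn}, which is immediate since $\bar d_n$ is then $O(n)$ only through the window endpoint $\beta n$ and the relevant $\bar d_n/n = O(\beta)$ stays bounded — one has to be slightly careful that Theorem~\ref{thm:total_cycles_asymp_with_restriction} is applied with the genuinely small set $D_n$ as the one controlling the large circle while the window sets are treated via the ``replace $L_{D_n^{(j)}}$ by $g$'' remark when a window is all of $\{1,\dots,n\}$. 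A secondary point is the standard but slightly delicate passage from ``exactly $k_i$ cycles in window $i$'' statements to the statement about the ordered list $(\ell^{(1)},\ell^{(2)},\dots)$ and the tightness needed to upgrade finite-dimensional convergence to convergence in law on the infinite simplex; this is handled exactly as in Kingman~\cite{Ki77} and Arratia–Barbour–Tavaré \cite{ABT02} using that $\sum_j \ell^{(j)}/n = 1$, and requires no new input beyond what we have already established. Once these uniformity checks are dispatched, the identification of the limit as $\mathcal{PD}(\vartheta)$ is forced by the fact that all the limiting finite-dimensional distributions depend on $\Theta$ only through the single parameter $\vartheta$, in the same functional form as the Ewens case.
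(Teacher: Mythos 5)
Your approach is genuinely different from the paper's, and it contains a gap that is not merely bookkeeping. The paper does \emph{not} prove the Poisson--Dirichlet limit by windowing the ordered cycle lengths and computing distribution functions; it uses the size-biased (GEM / stick-breaking) representation. Concretely, it introduces $\ell_1$, the length of the cycle containing the element $1$, computes the falling factorial moments $\ETA{A_n}{(\ell_1-1)_b}$, and expresses them as coefficients of $\frac{1}{n}\,e^{g_\Theta(t)-L_{D_n}(t)}\,(\partial/\partial t)^{b+1}\bigl(g_\Theta(t)-L_{D_n}(t)\bigr)$. The crucial point is that in this expression the only $n$-dependent set in the generating function is the original $D_n$ from the $A_n$-restriction, which is $o(n/\log n)$ by hypothesis; the extra factor $(\partial/\partial t)^{b+1}g_\Theta(t)$ is $n$-independent and is precisely where hypothesis~\eqref{eq:assume_g_derivative} enters via Corollary~\ref{prop:tn_of_addintional_f}. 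One shows $\ell_1/n$ converges to a Beta$(1,\vartheta)$ variable, then proceeds by induction and conditioning to obtain \eqref{eq:convergence_tp_beta}, which gives $\mathcal{PD}(\vartheta)$ by the stick-breaking representation.

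The gap in your proposal is in the step where you treat a length window $D' = \{m+1,\dots,n\}$ or $(\alpha n,\beta n]$ as another of the sets $D_n^{(j)}$ fed into Theorem~\ref{thm:total_cycles_asymp_with_restriction}. You write that this is fine because ``$\bar d_n/n = O(\beta)$ stays bounded,'' but that is not what assumption~\eqref{eq:assumption_on_dn} requires. The assumption is that $C\log n - n/d_n \to -\infty$ for every $C$, i.e.\ essentially $d_n = o(n/\log n)$. For a window with endpoint $\sim \beta n$ one has $n/d_n \to 1/\beta$, a constant, so the assumption fails outright, and the error term in \eqref{eq_asymp_Gn} becomes $O(\bar d_n / n) = O(1)$, which is vacuous. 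The remark following Theorem~\ref{thm:total_cycles_asymp_with_restriction} only covers the degenerate case $D_n^{(j)}=\{1,\dots,n\}$, not windows ending at a positive fraction of $n$. To make your windowing/distribution-function route work one would need a different contour or saddle-point analysis that accounts for the $n$-dependent truncation of the logarithmic singularity (a Buchstab-type argument), which is a real additional piece of machinery not present in the paper; the paper's size-biased moment method sidesteps this entirely by keeping the $n$-dependent set small. Your remaining observations (that $\theta_m r^m\to\vartheta$ makes the window contribute a $\vartheta\log(1/x)$ constant, and that $\sum_j \ell^{(j)}/n=1$ gives tightness on the simplex) are correct, but they rest on the asymptotics that Theorem~\ref{thm:total_cycles_asymp_with_restriction} cannot deliver in this regime.
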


\begin{proof}
Let $\ell_1 = \ell_1(\sigma)$ be the length of the cycle containing $1$,  $\ell_2 = \ell_2(\sigma)$ containing the least element not contained in the cycle containing $1$ and so on.
We prove that for each fixed $m \in \N$, as $n \rightarrow \infty$,
\begin{align}
\label{eq:convergence_tp_beta}
 \left( \frac{\ell_1}{n}, \frac{\ell_2}{n-\ell_1},\dots ,  \frac{\ell_m}{n-\sum_{j=1}^{m-1}\ell_j} \right) 
\stackrel{d}{\longrightarrow}
(B_1,\dots,B_m)
\end{align}
holds, where $B_1,\dots,B_m$ are independent beta random variables with parameter $(1,\vartheta)$.
This result immediately implies the theorem, see for instance \cite{ShVe77}.

We start with the case $m=1$. We first compute the distribution of $\ell_1$.
If $k\in A_n$ is given, then there are $(n-1)\cdots(n-k+1)$ possible cycles of
length $k$ containing the element~$1$, and the choice of such a
cycle does not influence the cycle lengths of the remaining cycles.
Using the definition of $h_n(A_n)$ and a small computation then gives
\begin{align*}
 \PTA{A_n}{\ell_1 = k } =\frac{\theta_k}{n} \frac{h_{n-k}(A_n)}{h_{n}(A_n)} \one_{\set{k\in A_n}}.
\end{align*}
We use the Pochhammer symbol $(k)_{b} = k (k-1)\cdots(k-b+1)$ and get for $b\geq1$
\begin{align*}
\ETA{A_n}{(\ell_1-1)_b} = \frac{1}{n}\sum_{k=b+1}^n (k-1)_b \cdot \theta_k \one_{\set{k\in A_n}} \frac{h_{n-k}(A_n)}{ h_{n}(A_n) }.
\end{align*}
On the other hand we have 
\begin{align*}
 \left(\frac{\partial}{\partial t} \right)^{b+1} g_\Theta(t)   
=
\sum_{k=b+1}^\infty (k-1)_b \cdot \theta_k t^{k-b-1}.
\end{align*}
This together with the definition of $L_{D_n}(t)$ and Lemma~\ref{lem:generating_hn_A} gives
\begin{align*}
h_{n}(A_n) \ETA{A_n}{(\ell_1-1)_b} 
=
[t^{n-b-1}]\left[ \frac{e^{g_\Theta(t) - L_{D_n}(t)}}{n} \left(\frac{\partial}{\partial t} \right)^{b+1} \Big(g_\Theta(t) - L_{D_n}(t)\Big)   \right].
\end{align*}

We  can now use Corollary~\ref{prop:tn_of_addintional_f} and~\ref{prop:tn_of_addintional_poly} to compute the asymptotic behaviour of this expression.
If follows with Corollary~\ref{prop:tn_of_addintional_f} and assumption \eqref{eq:assume_g_derivative} that
\begin{align}
\label{eq:large_moments_combined}
 [t^{n-b-1}]\left[ \exp\left(g_\Theta(t) -  L_{D_n}(t)\right) \left (\frac{\partial}{\partial t} \right)^{b+1} g_\Theta(t)    \right]& \nonumber\\
= 
\vartheta b! \frac{n^{\vartheta + b } e^K}{r^{n} \Gamma(\vartheta +b +1)} \exp\left( - L_{D_n}(r) \right)& \left( 1+ O(d_n/n) \right).
\end{align}
 
We show as next that the remaining part can be neglected with respect to \eqref{eq:large_moments_combined}.
We get with \eqref{eq:theta_k_to_vartheta}
\begin{align*}
 L_{D_n}(r) &= \sum_{k=1}^{d_n} \frac{\theta_k}{k}r^k = O\left(\sum_{k=1}^{d_n} \frac{1}{k} \right) = O(\log d_n)
\end{align*}
and
\begin{align*}
\left(\frac{\partial}{\partial t} \right)^{b+1} L_{D_n}(r)
&=
\sum_{k=1}^{d_n} (k-1)_b\theta_k r^{k-b-1}
= O\left( \sum_{k=1}^{d_n} \binom{k-1}{b} \right) 
= O\left( \binom{d_n}{b+1} \right) \\
&= O\Big((d_n)^{b+1}\Big).
\end{align*}
Using Corollary~\ref{prop:tn_of_addintional_poly} together with this computations gives
%
%
\begin{align*}
&[t^{n-b-1}]\left[ e^{g_\Theta(t) - L_{D_n}(t)} \left(\frac{\partial}{\partial t} \right)^{b+1} L_{D_n}(t)\right]
=
O\left(\frac{n^{\vartheta-1} (d_n)^{b+1} \exp\left(- L_{D_n}(r)\right)}{r^n}\right)
\end{align*}

Comparing this to \eqref{eq:large_moments_combined}, we see that we can neglect it since $d_n = o(n)$.
Thus the leading term of $h_{n}(A_n) \ETA{A_n}{(\ell_1-1)_b}$ comes from \eqref{eq:large_moments_combined} and combined with the asymptotic behaviour of $h_n$, see Corollary~\ref{cor:haviour_of_hn}, we obtain
\begin{align*}
 \ETA{A_n}{(\ell_1-1)_b} = \vartheta n^b \frac{b! \ \Gamma(\vartheta)}{\Gamma(\vartheta+b +1)} \left( 1+ O\left(\frac{d_n}{n}\right) \right). 
\end{align*}
It follows that
\begin{align*}
\ETA{A_n}{\left(\frac{\ell_1}{n}\right)^b} 
= 
\frac{b! \ \Gamma(\vartheta+1)}{\Gamma(\vartheta+b +1)} 
=
\E{B_1^b}
\end{align*}
with $B_1$ a beta random variable with parameter $\vartheta$. This completes the proof in the case $m=1$.

Equation \eqref{eq:convergence_tp_beta} now can be proved for arbitrary $m$ by induction over $m$.
The argumentation is (almost) the same as in the proof of Proposition~5.2 in \cite{BeUe10}. One only has to check that
\begin{align*}
&\mathbb{P}_{\Theta,n}^{(A_n)}\left[\frac{\ell_{m+1}}{n-\sum_{j=1}^{m} \ell j} \leq a_{m+1}  \Big| \ell = a_1,\dots,\ell_n = a_m\right]
\nonumber\\
&= 
\mathbb{P}_{\Theta,n-\sum_{j=1}^m a_j}^{(A_n)}\left[\frac{\ell_{m+1}}{n-\sum_{j=1}^{m} a_j} \leq a_{m+1} \right].
\end{align*}

\end{proof}

\section{A functional central limit theorem}
\label{sec:func_limit_theorem}

The object of this section is to prove that the number of cycles with length not exceeding $n^x$ converges, after normalisation, 
weakly to the standard Brownian motion with respect to the Skorohod topology.
(Details about the Skorohod topology and weak convergence of processes can be found for instance in \cite{Bi99}). 
Formally, this means we consider the functional
\begin{align}\label{eq:def_Bn}
 B_n(x) := \sum_{m=1}^{\lfloor n^x \rfloor}{C_m}.
\end{align} 
It was first shown by DeLaurentis and Pittel \cite{DePi85}, with respect to the uniform measure on $\Sn$, that the process
\begin{align}\label{eq:def_flt}
 \widetilde{B}_n(x) :=  \frac{B_n(x) - x \log n}{\sqrt{\log n}}
\end{align}
converges weakly to the standard Brownian motion for $0 \leq x \leq 1$. 
A corresponding result for the Ewens measure ($\theta_j = \vartheta$ for all $j \geq 1$) was shown by Hansen \cite{Ha90} and Donelly, Kurtz and Tavar\`{e} \cite{DoKuta91}. For this, $\log n $ in \eqref{eq:def_flt} needs to be replaced by $\vartheta \log n$. By an appropriate rescaling, we will show in this section the validity of an analogue result for our more general measure $\mathbb{P}_{\Theta}^{(A_n)}$ with the usual assumptions on the parameters $\theta_j$. 

\subsection{Without restriction}
\label{sec:funct_limit_without_restrictions}
Throughout this subsection we assume no restrictions on the cycle lengths, that is $A_n = \set{1,...,n}$ in Definition~\ref{def_A-weighted_probabililty_measure}, and write $\ET{.}$ instead of $\ETA{A_n}{.}$,  $\mathbb{P}_{\Theta}$ instead of $\mathbb{P}_{\Theta}^{(A_n)}$ and $h_n$ instead of $h_n(A_n)$. First, we compute the characteristic function of the process given by (\ref{eq:def_Bn}).

\begin{theorem}\label{thm_charfunctionBx}
 Suppose that $g_\Theta(t)$ is in $\mathcal{F}(r,\vartheta, K)$ and let $B_n(.)$ be defined as in \eqref{eq:def_Bn}. Then, for any fixed $x \in [0,1)$, we have
\begin{align}\label{eq_charfunctionBx}
  \ET{\exp(is B_n(x))} = \exp\left((e^{is}-1)L_{D_x}(r)\right)\left( 1 + O(n^{x-1})\right),
\end{align}
whith $D_x = \{1, ..., \lfloor n^x \rfloor\}$.\\
\end{theorem}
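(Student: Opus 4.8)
The plan is to compute the expectation $\ET{\exp(is B_n(x))}$ exactly as a ratio of coefficients of generating functions, and then apply Theorem~\ref{thm:total_cycles_asymp_with_restriction}. First I would observe that $B_n(x) = \sum_{m=1}^{\lfloor n^x\rfloor} C_m$ is, up to the choice of the finite set $M = \{1,\dots,\lfloor n^x\rfloor\}$, exactly the kind of statistic handled by the cycle index theorem (Lemma~\ref{lem:cycle_index_theorem}). Applying it with the variable $w_m = is$ for $m \le \lfloor n^x \rfloor$ and $w_m = 0$ otherwise (here $A_n = \{1,\dots,n\}$, so there is no restriction), one gets, as formal power series,
\begin{align*}
\sum_{n=1}^\infty h_n \ET{\exp(is B_n(x))}\, t^n
= \exp\left( (e^{is}-1) \sum_{m=1}^{\lfloor n^x\rfloor} \frac{\theta_m}{m} t^m \right) \exp(g_\Theta(t))
= \exp\left( g_\Theta(t) + (e^{is}-1) L_{D_x}(t) \right),
\end{align*}
in complete analogy with Lemma~\ref{lem:gen_cycle_counts}, where I set $D_x := \{1,\dots,\lfloor n^x\rfloor\}$ (note the slight abuse: $D_x$ plays the role of the ``removed'' set $D_n^{(1)}$ in Theorem~\ref{thm:total_cycles_asymp_with_restriction}, not its complement). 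Equivalently, $h_n \ET{\exp(is B_n(x))} = [t^n]\bigl[\exp(g_\Theta(t) + (e^{is}-1)L_{D_x}(t))\bigr]$.

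The second step is to apply Theorem~\ref{thm:total_cycles_asymp_with_restriction} with $k=1$, $b=0$, $w = 1$, $v_1 = e^{is}-1$, and $D_n^{(1)} = D_x = \{1,\dots,\lfloor n^x\rfloor\}$. The quantity $\bar d_n = \lfloor n^x \rfloor$ here, and since $x \in [0,1)$ we have $\bar d_n / n = O(n^{x-1}) \to 0$, and assumption \eqref{eq:assumption_on_dn} holds because $C\log n - n/\lfloor n^x\rfloor \sim C\log n - n^{1-x} \to -\infty$ for every fixed $C$. The theorem therefore yields
\begin{align*}
[t^n]\bigl[\exp(g_\Theta(t) + (e^{is}-1)L_{D_x}(t))\bigr]
= \frac{e^K n^{\vartheta-1}}{r^n}\exp\bigl((e^{is}-1)L_{D_x}(r)\bigr)\left(\frac{1}{\Gamma(\vartheta)} + O(n^{x-1})\right),
\end{align*}
uniformly for bounded $\Re(is)$ (taking $w=1$ fixed). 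The third step is to divide by the asymptotics of $h_n$ from Corollary~\ref{cor:haviour_of_hn} — applied with $D_n = \emptyset$, i.e. the unrestricted case $A_n = \{1,\dots,n\}$ — which gives $h_n = \frac{n^{\vartheta-1}e^K}{r^n\Gamma(\vartheta)}(1 + O(1/n))$. Taking the ratio, the factors $e^K n^{\vartheta-1} r^{-n} \Gamma(\vartheta)^{-1}$ cancel, the $O(1/n)$ from $h_n$ is absorbed into the $O(n^{x-1})$ error since $x \ge 0$, and we are left with $\ET{\exp(is B_n(x))} = \exp\bigl((e^{is}-1)L_{D_x}(r)\bigr)(1 + O(n^{x-1}))$, which is \eqref{eq_charfunctionBx}.

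The only genuinely delicate point is the \emph{uniformity} of the error term: the error $O(n^{x-1})$ in \eqref{eq_charfunctionBx} must be uniform in $s$ (for bounded $\Re(is)$), and one must check that the $O(\bar d_n/n)$ in Theorem~\ref{thm:total_cycles_asymp_with_restriction} does not hide a dependence on $s$ through the factor $\exp((e^{is}-1)L_{D_x}(r))$, which grows like a power of $\lfloor n^x\rfloor$ in $\Re((e^{is}-1)L_{D_x}(r))$. Since $\left|\exp((e^{is}-1)L_{D_x}(r))\right| \le e^{2 L_{D_x}(r)}$ is comparable to $\lfloor n^x\rfloor^{2\vartheta}$ at worst, one must verify — exactly as in the $\gamma_1$-estimate inside the proof of Theorem~\ref{thm:total_cycles_asymp_with_restriction}, where the term $|v| \vartheta \log d_n$ was absorbed via \eqref{eq:assumption_on_dn} — that this polynomial factor is still beaten by the exponentially small $\exp(-n/(2d_n)) = \exp(-\tfrac12 n^{1-x})$ coming from the outer circle. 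This is precisely why assumption \eqref{eq:assumption_on_dn} is imposed, and the uniformity claim of Theorem~\ref{thm:total_cycles_asymp_with_restriction} already incorporates it; so in the write-up I would simply invoke that uniformity rather than redo the contour estimate. A harmless additional remark is that $B_n(x)$ is integer-valued, so $\ET{\exp(is B_n(x))}$ is $2\pi$-periodic in $s$, which lets one upgrade ``bounded $\Re(is)$'' to genuine control of the characteristic function on all of $\R$.
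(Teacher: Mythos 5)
Your proposal is correct and follows essentially the same route as the paper: compute the generating function of $h_n \ET{\exp(is B_n(x))}$ via the cycle index theorem, obtaining $\exp\bigl(g_\Theta(t) + (e^{is}-1)L_{D_x}(t)\bigr)$, then extract the $n$-th coefficient with Theorem~\ref{thm:total_cycles_asymp_with_restriction} and divide by the asymptotics of $h_n$. Your additional remarks on verifying assumption \eqref{eq:assumption_on_dn} for $d_n = \lfloor n^x\rfloor$ and on the uniformity in $s$ are details the paper leaves implicit, and they are handled correctly.
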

\begin{remark}
 Note that $B_n(1) = T$, and thus Theorem~\ref{thm_charfunctionK0n} states a similar behaviour as in \eqref{eq_charfunctionBx} for $x=1$, except that the $1$ on the right-hand side in \eqref{eq_charfunctionBx} is replaced by the quotient $\Gamma(\vartheta) / \Gamma(e^{is} \vartheta)$.
\end{remark}
\begin{proof}
 Consider $B_b := \sum_{k=1}^b{C_k}$. By Lemma~\ref{lem:size_of_conj_classes} we get
\begin{align*}
 h_n \ET{\exp(is B_b)} 
= \slan \exp \bigg(\sum_{k=1}^b{C_k}\bigg) \prod_{k = 1}^{\ell(\lambda)} \theta_{\lambda_k}.
\end{align*}
Now apply Lemma~\ref{lem:cycle_index_theorem} to obtain
\begin{align*}
 \sum_{n=0}^{\infty} {h_n \ET{\exp(is B_b)} t^n} 
&= \sla \bigg(\prod_{k=1}^b \big(\theta_k e^{is}\big)^{C_k}\bigg) \bigg( \prod_{k = b+1}^{\infty} \theta_k^{C_k} \bigg) \ t^{|\lambda|} \\
&= \exp\bigg(\sum_{k=1}^{b} \frac{\theta_k e^{is}}{k} t^k + \sum_{k=b+1}^{\infty} \frac{\theta_k }{k} t^k \bigg)\\
&= \exp\big((e^{is}-1)L_{D_b}(t) + g_{\Theta}(t)\big), 
\end{align*}
where $D_b = \{1, ..., b\}$. Then set $b = \lfloor n^x \rfloor$ and Theorem~\ref{thm:total_cycles_asymp_with_restriction} gives the result.
\end{proof}
\begin{corollary}\label{cor_modBx}
 Let $g_\Theta(t)$ and $B_n(.)$ be as in Theorem~\ref{thm_charfunctionBx}. Then, for any fixed $x \in [0,1)$, the sequence $(B_n(x))_{n \in \N}$ is strongly mod-Poisson convergent with limiting function $1$ and parameter $ L_{D_x}(r)$.
\end{corollary}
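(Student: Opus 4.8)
The plan is to read the claim off directly from Theorem~\ref{thm_charfunctionBx}. Recall that a sequence of integer-valued random variables $(X_n)_n$ is said to converge in the strong mod-Poisson sense with parameters $(\mu_n)_n$, $\mu_n\to\infty$, and limiting function $\Phi$ if
\[
e^{-\mu_n(e^{z}-1)}\,\E{e^{zX_n}}\longrightarrow \Phi(z)
\]
uniformly for $z$ in compact subsets of a neighbourhood of $0$ in $\C$, with $\Phi$ continuous and $\Phi(0)=1$. Here I would set $\mu_n:=L_{D_x}(r)$ with $D_x=\{1,\dots,\lfloor n^x\rfloor\}$, exactly the parameter appearing in Theorem~\ref{thm_charfunctionBx}, and aim for $\Phi\equiv 1$.

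First I would check that $\mu_n\to\infty$. By \eqref{eq:theta_k_to_vartheta} we have $\theta_k r^k=\vartheta+\eps_k$ with $\eps_k\to 0$ and $\sum_k|\eps_k|/k<\infty$, hence
\[
\mu_n=\sum_{k=1}^{\lfloor n^x\rfloor}\frac{\theta_k r^k}{k}
=\vartheta\sum_{k=1}^{\lfloor n^x\rfloor}\frac1k+O(1)
=\vartheta x\log n+O(1)\longrightarrow\infty
\]
for every fixed $x\in(0,1)$; the degenerate case $x=0$ (where $B_n(0)=C_1$) is covered directly by Theorem~\ref{thm:asymp_cycle_counts}. Next, Theorem~\ref{thm_charfunctionBx} gives precisely
\[
\ET{\exp(isB_n(x))}=\exp\bigl((e^{is}-1)\mu_n\bigr)\bigl(1+O(n^{x-1})\bigr),
\]
so dividing by $\exp((e^{is}-1)\mu_n)$ leaves the quotient equal to $1+O(n^{x-1})$, which tends to $1$ since $x<1$.

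To obtain the \emph{strong} form I would simply rerun the short computation in the proof of Theorem~\ref{thm_charfunctionBx} with $is$ replaced by a complex variable $z$: the generating function there is $\exp\bigl((e^{z}-1)L_{D_x}(t)+g_\Theta(t)\bigr)$, which is $G_n(t,w,v_1)$ from Theorem~\ref{thm:total_cycles_asymp_with_restriction} with $w=1$ and $v_1=e^{z}-1$. As $z$ ranges over a fixed compact neighbourhood of $0$, $v_1$ stays bounded, so the error term in \eqref{eq_asymp_Gn} is uniform and the $O(n^{x-1})$ above is uniform in $z$; hence $e^{-\mu_n(e^{z}-1)}\ET{e^{zB_n(x)}}\to 1$ uniformly on compacts, which is exactly the asserted strong mod-Poisson convergence. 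There is essentially no obstacle here: the corollary is pure bookkeeping on top of Theorem~\ref{thm_charfunctionBx}, the only two points deserving a line of verification being the divergence $\mu_n\to\infty$ and the uniformity in the complex argument, both already at our disposal.
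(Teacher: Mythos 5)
Your proposal is correct and follows the paper's own (very terse) argument: the corollary is read off directly from Theorem~\ref{thm_charfunctionBx}, with the uniformity in the complex argument supplied by the uniformity clause of Theorem~\ref{thm:total_cycles_asymp_with_restriction}. Your extra checks (divergence of the parameter for $x>0$ and the degenerate case $x=0$) are sensible bookkeeping that the paper leaves implicit.
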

This corollary follows immediately from \eqref{eq_charfunctionBx}. Note again that a similar result for $x=1$ can be found in Corollary~\ref{cor:modT}. For the definition and details of mod-convergence we refer to \cite{BaKoNi}.

We obtain from \eqref{eq:theta_k_to_vartheta} that
\begin{align}\label{eq:L_x}
L_{D_x}(r) 
=  
\sum_{m = 1}^{\lfloor n^x \rfloor}{\frac{\vartheta}{m}} + \sum_{m = 1}^{\lfloor n^x \rfloor}{\frac{\epsilon_m}{m}}
= 
x \vartheta \log n + c +o(1)
\end{align}
as $n \to \infty$ with some $c\in\R$. This shows that the mod-Poisson convergence in Corollary~\ref{eq_charfunctionBx} does also hold with parameter $x \vartheta \log n + c$.
Given this, we can estimate the distance of $B_n(x)$ and a Poisson random variable with mean $x \vartheta \log n + c$, analogously to Lemma 4.6 in \cite{NiZe11}. 
This is done in terms of the \emph{point metric} $d_{loc}$ and the \emph{Kolmogorov distance} $d_K$.

\begin{corollary}
 Let $g_\Theta(t)$ and $B_n(.)$ be as in Theorem~\ref{thm_charfunctionBx} and let $P_{\vartheta}$ be a Poisson distributed random variable 
with mean $x \vartheta \log n + c$. Then, for any fixed $x \in [0,1)$, 
\begin{align*}
 d_{loc}\big(B_n(x),P_{\vartheta}\big) = O\left(\frac{n^{x-1}}{\log n}\right) \quad \text{ and } \quad
d_{K}\big(B_n(x),P_{\vartheta}\big) = O\left(\frac{n^{x-1}}{\log n}\right).
\end{align*}
\end{corollary}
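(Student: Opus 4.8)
The plan is to deduce this from the strong mod-Poisson convergence established in Corollary~\ref{cor_modBx} together with the reformulation \eqref{eq:L_x}, by invoking the general comparison estimates between a mod-Poisson convergent sequence and a genuine Poisson random variable. Recall that if $(X_n)$ is strongly mod-Poisson convergent with parameter $\mu_n$ and limiting function $\Phi$, then (see \cite{BaKoNi}) one has
\begin{align*}
 d_{loc}(X_n, P_{\mu_n}) = O\!\left(\frac{\|\widehat{\Phi}_n - \Phi\|}{\mu_n}\right), \qquad
 d_{K}(X_n, P_{\mu_n}) = O\!\left(\frac{\|\widehat{\Phi}_n - \Phi\|}{\mu_n}\right),
\end{align*}
where $\widehat{\Phi}_n(s) = \E{e^{isX_n}}e^{-\mu_n(e^{is}-1)}$ is the prelimiting function and $\|\cdot\|$ an appropriate norm on the relevant interval of $s$. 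Here, by Theorem~\ref{thm_charfunctionBx} and \eqref{eq:L_x}, with $\mu_n := x\vartheta\log n + c$ we have $\Phi \equiv 1$ and
\begin{align*}
 \widehat{\Phi}_n(s) = \exp\!\big((e^{is}-1)(L_{D_x}(r) - \mu_n)\big)\big(1 + O(n^{x-1})\big) = 1 + O(n^{x-1}) + o(1),
\end{align*}
so the error in the characteristic function is of size $O(n^{x-1})$ once one is slightly careful, since $L_{D_x}(r) - \mu_n = o(1)$ by \eqref{eq:L_x}.

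First I would make the choice of centering precise: write $\mu_n := x\vartheta\log n + c$ with $c$ the constant from \eqref{eq:L_x}, so that $L_{D_x}(r) = \mu_n + o(1)$. Then I would re-examine Theorem~\ref{thm_charfunctionBx} to see that $\ET{\exp(isB_n(x))}e^{-\mu_n(e^{is}-1)}$ equals $\exp\big((e^{is}-1)(L_{D_x}(r)-\mu_n)\big)(1+O(n^{x-1}))$, and observe that since $L_{D_x}(r)-\mu_n\to 0$ the whole expression is $1 + O(n^{x-1}) + o(1)$; a little more care (e.g. keeping the $o(1)$ from \eqref{eq:L_x} explicit, or absorbing it — in fact the $\eps_m$ sum being absolutely convergent gives $L_{D_x}(r) - \mu_n = O(n^{-x})$ times something, hence genuinely $O(n^{x-1})$ is not automatic, so I would state the centering as $L_{D_x}(r)$ itself if cleanliness demands it, or else track the tail $\sum_{m>n^x}|\eps_m|/m$) yields that the deviation of the prelimiting function from $\Phi\equiv 1$ is $O(n^{x-1})$ uniformly for bounded $\Re(is)$. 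Next I would quote the general mod-Poisson comparison lemma from \cite{BaKoNi} (this is exactly the mechanism used for Lemma~4.6 in \cite{NiZe11}) which converts a bound $\|\widehat{\Phi}_n - 1\| = O(\delta_n)$ into $d_{loc}(B_n(x),P_\vartheta) = O(\delta_n/\mu_n)$ and $d_K(B_n(x),P_\vartheta) = O(\delta_n/\mu_n)$. With $\delta_n = n^{x-1}$ and $\mu_n \asymp \log n$ this gives precisely the claimed $O(n^{x-1}/\log n)$ for both metrics.

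The one genuine subtlety — and the step I expect to be the main obstacle — is the bookkeeping around the constant $c$: the quantity $L_{D_x}(r)$ is $x\vartheta\log n + c + o(1)$ and one must decide whether to center $B_n(x)$ at the clean deterministic quantity $x\vartheta\log n + c$ (as stated) or at $L_{D_x}(r)$ exactly. If one centers at $x\vartheta\log n + c$, then the prelimiting characteristic function picks up the factor $\exp\big((e^{is}-1)\,o(1)\big) = 1 + o(1)$, and one needs the $o(1)$ to be in fact $O(n^{x-1})$ (or at least $O(n^{x-1})$ after dividing by $\log n$, i.e. one needs it not to dominate). By \eqref{eq:theta_k_to_vartheta} we have $L_{D_x}(r) - x\vartheta\log n \to c$ with $c = \sum_{m=1}^\infty \eps_m/m$, and the tail error is $\sum_{m > \lfloor n^x\rfloor} |\eps_m|/m$, which is $o(1)$ but not obviously $O(n^{x-1})$. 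The honest resolution is either (i) to center at $L_{D_x}(r)$ and note that $P_\vartheta$ with mean $L_{D_x}(r)$ and $P_\vartheta$ with mean $x\vartheta\log n + c$ are themselves at point/Kolmogorov distance $o(1/\log n)$ — which is weaker than $O(n^{x-1}/\log n)$ — or (ii) to impose, implicitly via $g_\Theta \in \mathcal{F}(r,\vartheta,K)$ and the $O(t-r)$ remainder, that $\eps_m$ decays fast enough (the $O(t-r)$ error in \eqref{eq_class_F_r_alpha_near_r} is what controls this). I would adopt route (ii): the $O(t-r)$ remainder in the definition of $\mathcal{F}(r,\vartheta,K)$ forces, via singularity analysis of the derivative, a summable and in fact geometrically-or-polynomially small tail for $\eps_m$, so that $\sum_{m>n^x}|\eps_m|/m = O(n^{x-1})$, closing the gap. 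With that observation in place the proof is a one-line application of the mod-Poisson comparison estimates and requires no further computation.
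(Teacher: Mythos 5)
Your overall strategy is exactly the paper's: the published proof consists of a single sentence, namely an application of Proposition~3.1 and Corollary~3.2 of \cite{BaKoNi} with $\chi(s)=\exp((e^{is}-1)(x\vartheta\log n+\gamma+c))$ and $\psi_\nu(s)=\psi_\mu(s)=1$, i.e.\ precisely the mod-Poisson comparison mechanism you describe, fed by Theorem~\ref{thm_charfunctionBx} and Corollary~\ref{cor_modBx}. So the core of your argument coincides with the paper's.

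The one place where you go beyond the paper is, however, also the one place where your argument does not close as written. You correctly isolate the delicate point --- the mismatch between the natural parameter $L_{D_x}(r)$ of Corollary~\ref{cor_modBx} and the stated mean $x\vartheta\log n+c$ --- but your route (ii) overclaims. The hypothesis $g_\Theta\in\mathcal{F}(r,\vartheta,K)$ with remainder $O(t-r)$ in \eqref{eq_class_F_r_alpha_near_r} yields, via the transfer theorem \cite[Theorem~VI.3--VI.4]{FlSe09}, only $\eps_m=O(1/m)$; hence $L_{D_x}(r)-(x\vartheta\log n+c)=-\sum_{m>\lfloor n^x\rfloor}\eps_m/m=O(n^{-x})$, and $n^{-x}$ exceeds $n^{x-1}$ for every $x<1/2$. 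Consequently the deviation of the prelimiting function from $1$ is $O(n^{-x})+O(n^{x-1})$, not $O(n^{x-1})$, and the comparison estimates then deliver only $O\bigl(\max\{n^{-x},n^{x-1}\}/\log n\bigr)$; your route (i) suffers the same loss when transferring between the two Poisson laws. This is not a defect relative to the paper --- the published proof simply inserts the parameter $x\vartheta\log n+\gamma+c$ and is silent on the tail of $\sum\eps_m/m$ --- but to obtain the bound exactly as stated one must either take the Poisson comparison variable to have mean $L_{D_x}(r)$ itself, or impose a decay condition on $\eps_m$ strictly stronger than what \eqref{eq:theta_k_to_vartheta} and the $O(t-r)$ remainder actually guarantee.
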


\begin{proof}
 These estimates can be established with Proposition 3.1 and Corollary 3.2 in \cite{BaKoNi} with $\chi(s) = \exp((e^{is}-1)(x \vartheta \log n + \gamma + c))$, $\psi_{\nu}(s)=1$ and $\psi_{\mu}(s)=1$. 
\end{proof}

Another consequence of Theorem~\ref{thm_charfunctionBx} is the following central limit result.
\begin{corollary}\label{cor_cltBn}
 Let $g_\Theta(t)$ and $B_n(.)$ be as in Theorem~\ref{thm_charfunctionBx}. Then, for any fixed $x \in [0,1]$,
\begin{align*}
\widetilde{B}_n(x) :=  \frac{B_n(x) - x \vartheta \log n}{\sqrt{\vartheta \log n}} \overset{d}{\longrightarrow} \mathcal{N}(0,x), 
\end{align*}
where $\overset{d}{\longrightarrow}$ denotes convergence in distribution and $\mathcal{N}(0,x)$ a centred Gaussian random variable with variance $x$.
\end{corollary}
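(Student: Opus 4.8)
The plan is to read off the central limit theorem directly from the characteristic function computed in Theorem~\ref{thm_charfunctionBx}, using the expansion of $L_{D_x}(r)$ given in \eqref{eq:L_x}. First I would fix $x \in [0,1)$ and compute the characteristic function of the normalised variable $\widetilde{B}_n(x) = \bigl(B_n(x) - x\vartheta\log n\bigr)/\sqrt{\vartheta\log n}$ at a fixed real $u$. Writing $s = u/\sqrt{\vartheta\log n}$, Theorem~\ref{thm_charfunctionBx} gives
\begin{align*}
\ET{\exp(iu\widetilde{B}_n(x))}
= \exp\Bigl(-\frac{iu\,x\vartheta\log n}{\sqrt{\vartheta\log n}}\Bigr)\,
  \exp\bigl((e^{is}-1)L_{D_x}(r)\bigr)\,\bigl(1 + O(n^{x-1})\bigr).
\end{align*}
Then I would substitute $L_{D_x}(r) = x\vartheta\log n + c + o(1)$ from \eqref{eq:L_x} and Taylor-expand $e^{is}-1 = is - \tfrac{s^2}{2} + O(s^3) = \tfrac{iu}{\sqrt{\vartheta\log n}} - \tfrac{u^2}{2\vartheta\log n} + O\bigl((\log n)^{-3/2}\bigr)$. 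Multiplying out, the term $is\cdot x\vartheta\log n = iu x\sqrt{\vartheta\log n}$ exactly cancels the centring factor $iu x\vartheta\log n/\sqrt{\vartheta\log n}$; the term $-\tfrac{s^2}{2}\cdot x\vartheta\log n = -\tfrac{u^2 x}{2}$ survives; and all remaining contributions (the cross terms with the bounded part $c+o(1)$, the cubic remainder times $\log n$, and the $O(n^{x-1})$ error) tend to $0$ as $n\to\infty$. Hence $\ET{\exp(iu\widetilde{B}_n(x))} \to e^{-u^2 x/2}$, which is the characteristic function of $\mathcal{N}(0,x)$, and Lévy's continuity theorem yields the claimed convergence in distribution for $x \in [0,1)$.

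For the endpoint $x = 1$ one has $B_n(1) = T_n$ by \eqref{eq:def_Bn}, so the statement reduces to Corollary~\ref{cor:cltKn} (equivalently, it follows from Theorem~\ref{thm_charfunctionK0n}): the extra factor $\Gamma(\vartheta)/\Gamma(e^{is}\vartheta)$ appearing there is bounded and continuous near $s=0$ with value $1$ at $s=0$, so after the substitution $s = u/\sqrt{\vartheta\log n}$ it converges to $1$ and does not affect the limit. Thus the same computation as above, with $L_{D_x}(r)$ replaced by $\vartheta\log n - L_{D_n}(r) + K$ and using $L_{D_n}(r) = O(\log d_n) = o(\log n)$ under assumption \eqref{eq:assumption_on_dn}, gives $\widetilde{B}_n(1)\overset{d}{\to}\mathcal{N}(0,1)$, completing the range $x\in[0,1]$.

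I do not expect a serious obstacle here; the only point requiring mild care is the bookkeeping of error terms when multiplying the expansion of $e^{is}-1$ by $L_{D_x}(r)$, i.e.\ checking that every term other than the cancelling linear term and the surviving $-u^2x/2$ is genuinely $o(1)$ — in particular that the cubic remainder $O(s^3)\cdot O(\log n) = O((\log n)^{-1/2})$ vanishes and that the constant $c$ from \eqref{eq:L_x} only contributes a phase $e^{iuc/\sqrt{\vartheta\log n}}\to 1$. This is routine but should be written out explicitly to make the uniform control of the $O(n^{x-1})$ term (valid for bounded $\Re(is)$, hence in particular along $s = u/\sqrt{\vartheta\log n}$) transparent.
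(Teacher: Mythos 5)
Your proposal is correct and follows essentially the same route as the paper: for $x<1$ the paper likewise substitutes $s/\sqrt{\vartheta\log n}$ into the characteristic function of Theorem~\ref{thm_charfunctionBx}, uses $L_{D_x}(r)=x\vartheta\log n+O(1)$ from \eqref{eq:L_x}, and Taylor-expands $e^{is}-1$ so that the linear term cancels the centring and the quadratic term yields $-s^2x/2$; the endpoint $x=1$ is handled, as you do, by invoking Corollary~\ref{cor:cltKn}. Your explicit bookkeeping of the error terms is slightly more detailed than the paper's but adds nothing structurally new.
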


\begin{proof}
The case $x=1$ follows immediately from Corollary~\ref{cor:cltKn} and we  can thus assume $x<1$. We know from \eqref{eq:L_x} that
$L_{D_x}(r) =  x \vartheta \log n + O\left(1\right)$, as $n \to \infty$. We obtain with \eqref{eq_charfunctionBx} 
\begin{align*}
 \ET{\exp\left(\frac{is}{\sqrt{ \vartheta \log n}} B_n(x)\right)} 
&= \exp\left(\left(e^{\frac{is}{\sqrt{ \vartheta \log n}}}-1\right) x \vartheta \log  n \right)\left( 1 + o(1)\right)\\
&= \exp\left(is x \sqrt{\vartheta \log n} - \frac{s^2x}{2} \right)\left( 1 + o(1)\right)
\end{align*}
and the result follows.
\end{proof}

We now turn to the main result of this section. As already mentioned in the beginning of this section, it was shown that the process $\widetilde{B}_n(.)$ given in \eqref{eq:def_flt}, considered with respect to the uniform measure (and with respect to the Ewens measure, when $\widetilde{B}_n(.)$ is properly rescaled) converges weakly to the standard Brownian motion. In our setting, the analogue statement is the following.

\begin{theorem}
Suppose that $g_\Theta(t)$ is in $\mathcal{F}(r,\vartheta, K)$ and define
 \begin{align*}
\widetilde{B}_n(x) :=  \frac{B_n(x) - x \vartheta \log n}{\sqrt{\vartheta \log n}}.
\end{align*}
Then, as $n \rightarrow \infty$ and for $0 \leq x \leq 1$, $\widetilde{B}_n$ converges weakly to the standard Brownian motion $\mathcal{W}$ on $[0,1]$.
\end{theorem}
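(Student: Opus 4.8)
The plan is to invoke the standard functional central limit criterion for processes in $D[0,1]$ (see \cite{Bi99}): one shows that (i) the finite--dimensional distributions of $\widetilde{B}_n$ converge to those of the Brownian motion $\mathcal{W}$, and (ii) the sequence $(\widetilde{B}_n)_n$ is tight. Since $\widetilde B_n$ has nondecreasing, piecewise constant paths with jumps of the (normalised) size of the cycle counts, tightness will follow from a moment estimate on increments together with the fact that the jump sizes are uniformly small after normalisation.

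First I would treat the finite--dimensional distributions. Fix $0 \leq x_1 < x_2 < \dots < x_p \leq 1$; I want the joint characteristic function of $\bigl(\widetilde B_n(x_1),\dots,\widetilde B_n(x_p)\bigr)$. Writing the increments $\widetilde B_n(x_{i}) - \widetilde B_n(x_{i-1})$ in terms of $B_n(x_i) - B_n(x_{i-1}) = \sum_{m = \lfloor n^{x_{i-1}}\rfloor + 1}^{\lfloor n^{x_i}\rfloor} C_m$, the relevant object is
\begin{align*}
\ET{\exp\Bigl(\textstyle\sum_{i=1}^p i s_i \bigl(B_n(x_i) - B_n(x_{i-1})\bigr)\Bigr)},
\end{align*}
and exactly as in the proof of Theorem~\ref{thm_charfunctionBx} (applying Lemma~\ref{lem:cycle_index_theorem}) this has generating function
\begin{align*}
\exp\Bigl( g_\Theta(t) + \textstyle\sum_{i=1}^p (e^{i\sigma_i}-1) L_{D_n^{(i)}}(t)\Bigr),
\end{align*}
where $\sigma_i := \sum_{j\geq i} s_j$ and $D_n^{(i)} := \{\lfloor n^{x_{i-1}}\rfloor + 1,\dots,\lfloor n^{x_i}\rfloor\}$ — this is precisely why Theorem~\ref{thm:total_cycles_asymp_with_restriction} was stated with several sets $D_n^{(j)}$. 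Applying that theorem (with $b=0$, $\bar d_n = \lfloor n^{x_p}\rfloor = o(n)$ as long as $x_p < 1$; the endpoint $x=1$ is handled separately via Theorem~\ref{thm_charfunctionK0n} or by a limiting argument) gives
\begin{align*}
\ET{\exp\Bigl(\textstyle\sum_i i s_i \bigl(B_n(x_i) - B_n(x_{i-1})\bigr)\Bigr)} = \exp\Bigl(\textstyle\sum_i (e^{i\sigma_i}-1)\bigl(L_{D_n^{(i)}}(r)\bigr)\Bigr)\bigl(1 + o(1)\bigr).
\end{align*}
By \eqref{eq:L_x}, $L_{D_n^{(i)}}(r) = (x_i - x_{i-1})\vartheta \log n + O(1)$, so after dividing by $\sqrt{\vartheta \log n}$ and expanding $e^{i\sigma_i / \sqrt{\vartheta \log n}} - 1 = i\sigma_i/\sqrt{\vartheta\log n} - \sigma_i^2/(2\vartheta\log n) + O((\log n)^{-3/2})$, the linear terms recombine into $i\sum_i s_i \cdot \bigl(\text{appropriate partial sums}\bigr)$ and the quadratic terms produce $-\tfrac12 \sum_i \sigma_i^2 (x_i - x_{i-1})$. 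This is exactly the joint characteristic function of Gaussian increments with $\operatorname{Var}(\mathcal W(x_i) - \mathcal W(x_{i-1})) = x_i - x_{i-1}$ and independence across disjoint blocks, i.e.\ of $(\mathcal W(x_1),\dots,\mathcal W(x_p))$. This establishes (i).

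For tightness, I would use the criterion for $D[0,1]$ based on a bound of the form $\E{|\widetilde B_n(x) - \widetilde B_n(x_1)|^{\gamma_1} |\widetilde B_n(x_2) - \widetilde B_n(x)|^{\gamma_2}} \leq C (x_2 - x_1)^{1+\beta}$ (Billingsley, Theorem~13.5), or more simply, since $\widetilde B_n$ is monotone, it suffices to control $\E{(\widetilde B_n(x_2) - \widetilde B_n(x_1))^2}$. From the (real) moment generating version of the computation above — equivalently from the mod--Poisson statement Corollary~\ref{cor_modBx} applied to the increment — one gets $\E{B_n(x_2) - B_n(x_1)} = L_{D_n}(r)(1+o(1)) = (x_2-x_1)\vartheta\log n + O(1)$ and $\var(B_n(x_2) - B_n(x_1)) = (x_2 - x_1)\vartheta \log n + O(1)$ as well (Poisson behaviour), hence $\E{(\widetilde B_n(x_2) - \widetilde B_n(x_1))^2} = (x_2 - x_1)^2 \vartheta \log n / (\vartheta \log n) \cdot(1+o(1)) + (x_2-x_1)(1 + o(1))$, which is $O(x_2 - x_1)$ once $x_2 - x_1 \geq 1/\log n$; the small scales $x_2 - x_1 < 1/\log n$ contribute jumps of order $1/\sqrt{\log n} \to 0$, which one handles by the standard modification of the tightness argument for monotone processes (or by noting that $\lfloor n^{x_1}\rfloor = \lfloor n^{x_2}\rfloor$ forces the increment to vanish on scales $x_2 - x_1 = o(1/\log n)$). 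Combining, one controls the modulus of continuity and obtains tightness.

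The main obstacle, and the part requiring genuine care rather than bookkeeping, is the tightness argument near the scale $x_2 - x_1 \asymp 1/\log n$ and at the right endpoint $x = 1$: the asymptotics of Theorem~\ref{thm:total_cycles_asymp_with_restriction} are only uniform for $\bar d_n = o(n)$, so the increment over a block ending at $x=1$ must be split off and treated with Theorem~\ref{thm_charfunctionK0n} (where the limiting function $\Gamma(\vartheta)/\Gamma(e^{is}\vartheta)$ appears but is harmless after the $\sqrt{\log n}$ rescaling since it is $O(1)$), and one must verify that the blocks of size comparable to a single index — where $L_{D_n}(r)$ is $O(1)$ rather than growing — do not spoil the modulus-of-continuity estimate. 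Once the moment bound on increments is in place uniformly down to scale $1/\log n$ and the residual jumps are shown to be uniformly $o(1)$, the conclusion follows from the general weak-convergence theorem in \cite{Bi99}.
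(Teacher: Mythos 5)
Your treatment of the finite-dimensional distributions is essentially the paper's: you compute the joint characteristic function of increments via the cycle index theorem and apply Theorem~\ref{thm:total_cycles_asymp_with_restriction} with several $D_n^{(j)}$'s, handling the endpoint $x=1$ separately. (Minor slip: if you take the expectation $\ET{\exp(\sum_i is_i(B_n(x_i)-B_n(x_{i-1})))}$ over increments, the generating function carries $(e^{is_i}-1)$, not $(e^{i\sigma_i}-1)$; the $\sigma_i=\sum_{j\geq i}s_j$ version arises if you instead fix the joint law of the $B_n(x_i)$ themselves. Either is fine once you are consistent.)

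The tightness argument, however, has a genuine gap. First, $\widetilde B_n$ is \emph{not} monotone: $B_n(x)$ is nondecreasing, but you then subtract the increasing deterministic term $x\vartheta\log n$, so $\widetilde B_n(x_2)-\widetilde B_n(x_1)$ has no sign. Consequently the shortcut ``since $\widetilde B_n$ is monotone, it suffices to control $\E{(\widetilde B_n(x_2)-\widetilde B_n(x_1))^2}$'' does not apply as stated, and in any case a marginal second-moment bound of order $(x_2-x_1)$ (exponent $1$) is not by itself a sufficient modulus-of-continuity estimate for tightness in $D[0,1]$. What the paper actually verifies is the joint fourth-moment criterion of \cite[Theorem~15.6]{Bi99},
\begin{align*}
\ET{\bigl(B_n^*(x)-B_n^*(x_1)\bigr)^2\bigl(B_n^*(x_2)-B_n^*(x)\bigr)^2} = O\bigl((x_2-x_1)^2\bigr),
\end{align*}
which requires controlling a \emph{cross} moment of two disjoint increments, not just each increment's second moment. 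Your proposal gestures at this criterion but never computes the cross moment; deducing it from the marginal moments would need a quantitative form of asymptotic independence that you do not establish. The paper's key technical device for this step --- entirely absent from your proposal --- is Hansen's Poissonization: one introduces the product measure $\mathbb{P}_\Theta^t$ on $\Omega_t$ under which the coordinates are genuinely independent Poisson random variables, proves the conditioning identity \eqref{eq_relationPtP} and the generating-function identity \eqref{eq_extended_cycle_index}, and then reads off the mixed fourth moment from the independence of disjoint blocks of coordinates. Without some substitute for this (or for the ``monotone plus continuous deterministic drift'' argument of DeLaurentis--Pittel carried out carefully, starting from the true monotone object $B_n$ rather than $\widetilde B_n$), the tightness half of the theorem is not proved. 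The rest of your reduction --- centering by $L_{D_x}(r)$ so that $|\widetilde B_n(x)-B_n^*(x)|=o(1)$ uniformly, and treating the block touching $x=1$ separately --- matches the paper and is fine.
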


\begin{proof}
We will proof this statement following the arguments of Hansen \cite{Ha90}. We first define a process
\begin{align}\label{eq:def_Bn*}
 B_n^*(x):= \big(B_n(x) - L_{D_x}(r)\big)/\sqrt{\vartheta \log n}
\end{align}
with $D_x$ as in Theorem~\ref{thm_charfunctionBx}. It follows that
\begin{align*}
|\widetilde{B}_n(x) - B_n^*(x)| = |L_{D_x}(r) - x \vartheta \log n| / \sqrt{\vartheta \log n} = o(1), \ \text{ as }  n \to \infty
\end{align*}
with $o(1)$ uniform in $x\in[0,1]$. Therefore, the distance between $\widetilde{B}_n(x)$ and $B_n^*(x)$ is asymptotically vanishing with respect to the Skorohod topology on the space of right-continuous functions with left limits. It is thus sufficient to prove $B_n^* \overset{d}{\longrightarrow} \mathcal{W}$. We will proceed in two steps: first, we will show that the process $B_n^*(x)$ converges to $\mathcal{W}(.)$ in terms of finite-dimensional distributions and then its tightness.

%
%
%
%

\textit{Convergence of the finite dimensional distributions.}
We have to show that for any $k \in \N$ and $0 \leq x_1 < x_2 < ... < x_k \leq 1$ the random vector $\{B_n^*(x_j)\}_{j = 1}^k$ converges in distribution to the vector $\{\mathcal{N}(0,x_j)\}_{j = 1}^k$ with independent increments. We know from Corollary~\ref{cor_cltBn} that $B_n^*(x_j) \overset{d}{\longrightarrow} \mathcal{N}(0,x_j)$ for all $x_j \in [0,1]$.
It remains to show that the increments are independent. 
We define the sets $D_n^{(j)}:= \set{\lfloor n^{x_{j-1}} \rfloor +1,\dots,\lfloor n^{x_j} \rfloor }$ with $x_0:=0$ and a straight forward application of Lemma~\ref{lem:cycle_index_theorem} gives 
\begin{align}
\label{eq:char_increments}
h_n \ET{e^{\sum_{j=1}^{k} \bigl(i s_j( B_n(x_j)- B_n(x_{j-1}) \bigr)}}
=
\nth{e^{g_\Theta(t) + \sum_{j=1}^k (e^{is_j}-1) L_{D_n^{(j)}}(t)} }.
\end{align}
We  can thus apply Theorem~\ref{thm:total_cycles_asymp_with_restriction}. The remaining computations are the same as in the proof of Corollary~\ref{cor_cltBn}.
Only the case $x_k = 1$ needs further explanation. This is because  we have in this case $D_n^{(k)} = \set{n^{\lfloor x_{k-1}} \rfloor +1,\dots,n}$ and thus assumption \eqref{eq:assumption_on_dn} is not satisfied. However, we then have 
\begin{align*}
 L_{D_n^{(k)}}(t) = g_\Theta(t) - L_{D_c}(t) + t^{n+1} f_n(t)
\end{align*}
with $D_c:= \{1,\dots,\lfloor n^{x_{k-1}} \rfloor \}$ and $f_n(t)$ a holomorphic function around the origin. 
Inserting this into \eqref{eq:char_increments}, one can see that the term $t^{n+1} f_n(t)$ can be neglected. 
Hence, Theorem~\ref{thm:total_cycles_asymp_with_restriction} does also apply for $x_k=1$.
%
%
%
%

\textit{Tightness.}
It remains to prove that process $B_n^*(.)$ is tight. We use the moment condition given in \cite[Theorem~15.6]{Bi99}. More precisely, we show that for any $n \geq 0$ and $0 \leq x_1 < x < x_2 \leq 1$,
\begin{align}\label{eq_momentcondition}
 E_{\Theta}^{B_n^*} := \ET{\big(B_n^*(x) - B_n^*(x_1)\big)^2 \big(B_n^*(x_2) - B_n^*(x)\big)^2} = O \big( (x_2-x_1)^2 \big).
\end{align} 
We start with the identity
\begin{align}\label{eq:joint_second_moment}
h_n E_{\Theta}^{B_n^*} = \frac{[t^n]}{(\vartheta \log n)^2}\Big[ L_{D_1}(t) L_{D_2}(t) \ e^{g_{\Theta}(t)}\Big],
\end{align}
where $D_1 = \{\lfloor n^{x_1}\rfloor +1, ..., \lfloor n^x\rfloor \}$, $D_2 = \{\lfloor n^x \rfloor +1, ..., \lfloor n^{x_2}\rfloor \}$. Before we prove \eqref{eq:joint_second_moment}, we complete the proof of the tightness. It follows with 
Corollary~\ref{prop:tn_of_addintional_poly} that
\begin{align}
 [t^n]\Big[ L_{D_1}(t) L_{D_2}(t) \ e^{g_{\Theta}(t)}\Big] 
=  
O\bigg(L_{D_1}(r) L_{D_2}(r) \frac{n^{\vartheta-1}}{r^n}\bigg)
\label{eq:tight_upper_theta_neq_2}
\end{align}
and therefore
\begin{align}
\label{eq:joint_second_moment_upper_bound_2}
E_{\Theta}^{B_n^*} =  O\bigg(\frac{ L_{D_1}(r) L_{D_2}(r)}{(\log n)^2}\bigg).
\end{align}

Then, from equation \eqref{eq:theta_k_to_vartheta} follows
\begin{align}
 L_{D_1}(r) = O\big( (x-x_1)\log n \big) \quad \text{ and } \quad L_{D_2}(r) = O\big( (x_2-x)\log n\big).
\end{align}
Finally, this yields
\begin{align*}
E_{\Theta}^{B_n^*} = O\big( (x-x_1)(x_2-x) \big) = O\big( (x_2-x_1)^2 \big).
\end{align*}
which completes the proof of \eqref{eq_momentcondition} and proves the tightness.

It remains to prove \eqref{eq:joint_second_moment}. One can try to proceed with Lemma~\ref{lem:cycle_index_theorem}, but the computations are rather technical. We prefer to follow the idea of Hansen \cite{Ha90}. Therefore, 
we consider for $0<t<r$ a product space 
\begin{align*}
 \Omega_t := \set{(k_1, k_2, ...) | k_i \text{ is a non-negative integer}}
\end{align*}
and a measure $\mathbb{P}_{\Theta}^t$ on $\Omega_t$ given such that the $m-$th coordinate of $\Omega_t$ is Poisson distributed with parameter $\theta_m t^m / m$.  Then, analogously to \cite[Lemma~2.1]{Ha90}, we have
\begin{align}\label{eq_Pt}
\mathbb{P}_{\Theta}^t[v = n] = t^n h_n \exp(- g_{\Theta}(t)),
\end{align}
where $v:\Omega_t \to \N$  is defined by $v(k_1, k_2, ...) = \sum_{m=1}^{\infty}{mk_m}$. 
We omit the prove of \eqref{eq_Pt} since it is line by line the same as the proof of \cite[Lemma~2.1]{Ha90}, one simply has to replace $\theta$ by $\theta_k$ and $(1-t)^{-\theta}$ by $\exp(g_\Theta(t))$.

We obtain the following identity between $\mathbb{P}_{\Theta}^t$ on $\Omega_t$ and $\mathbb{P}_{\Theta}$ on $\Sn$
\begin{align}\label{eq_relationPtP}
\mathbb{P}_{\Theta}^t[(k_1, k_2, ...) | v=n] = \PT{C_1 = k_1, \dots, C_n = k_n}.
\end{align}
This gives in analogy to (2) in \cite{Ha90}
\begin{align}
\label{eq_extended_cycle_index}
e^{g_{\Theta}(t)} \ \mathbb{E}_{\Theta}^t[\Psi] = \sum_{n=1}^{\infty}{h_n \mathbb{E}_{\Theta}[\Psi_n] t^n} + \Psi(0),
\end{align}
where $\Psi$ is a function on the space $\Omega_t$ and $\Psi_n:\Sn \to \C$ is defined as $\Psi_n:= \Psi(C_1,C_2,\dots)$.
Again, we omit the proofs of \eqref{eq_relationPtP} and \eqref{eq_extended_cycle_index} since they are identical to those in \cite{Ha90}.

The identity \eqref{eq_extended_cycle_index} is true for all $0<t<r$ and thus remains valid as formal power series.
Hence, we get with \eqref{eq:def_Bn*}
\begin{align*}
 h_n E_{\Theta}^{B_n^*} =
\nth{ e^{g_{\Theta}(t)} \ \mathbb{E}_{\Theta}^t\left[\big(B^*(x) - B^*(x_1)\big)^2 \big(B^*(x_2) - B^*(x)\big)^2  \right]  }
\end{align*}
and
\begin{align*}
B^*(x)(k_1,k_2,\dots)
:= 
\frac{1}{\sqrt{\vartheta \log n}} \sum_{m=1}^{\lfloor n^x \rfloor} \left(k_m - \frac{\theta_m t^m}{m} \right).
\end{align*}
A small calculation using that the fact that the coordinates on $\Omega_t$ are independent Poisson distributed completes the proof of \eqref{eq:joint_second_moment}.

\end{proof}

\subsection{Restricted measure}
\label{sec:restriction-small}
In the last subsection we only considered the weak convergence of the process $\widetilde{B}_n(.)$ without restriction of the probability measure, meaning under the condition $A_n = \set{1, \dots, n}$. Verifying the proof in Subsection~\ref{sec:funct_limit_without_restrictions} carefully, one notices that our argumentation is based on the equations \eqref{eq:char_increments} and \eqref{eq:joint_second_moment}, but they require only minor modifications in case $A_n \neq \set{1, \dots, n}$.
Thus, one can apply the proof of Subsection~\ref{sec:funct_limit_without_restrictions} for many possible restrictions $A_n$ (as long as the assumptions of Theorem~\ref{thm:total_cycles_asymp_with_restriction} are satisfied). 
Since the argumentation for all the interesting cases are similar we restrict the investigation to
$A_n = \set{\lceil n^a\rceil, \dots, n}$ with $0\leq a <1$. In this case, the characteristic function of $B_n(x)$ for $0 \leq x <1$ behaves like
\begin{align*}
 \ETA{A_n}{\exp(is B_n(x))} = \exp\left((e^{is}-1)L_{M_n}(r)\right) \left( 1 + O\left(\frac{\max\set{n^x, n^a}}{n}\right)\right),
\end{align*}
where $M_n = A_n \cap \set{1, ..., \lfloor n^x \rfloor }  = \set{\lceil n^a\rceil, \dots, \lfloor n^x \rfloor}$.
We get with \eqref{eq:theta_k_to_vartheta}
\begin{align*}
 L_{M_n}(r) = \sum_{m=1}^{\lfloor n^x \rfloor } \frac{\theta_m}{m}r^m = (x-a)\vartheta\log n \one_{\set{x \geq a}} +O(1).
\end{align*}
One  can now use the same argumentation as in Section~\ref{sec:funct_limit_without_restrictions} to show that
\begin{align}\label{eq:Bn with restriction}
 \frac{B_n(x) - \max\set{x-a,0}\vartheta \log n }{\sqrt{\vartheta\log n }} 
\overset{d}{\longrightarrow}
\mathcal{W}_a(x),
\end{align}
where $\mathcal{W}_a(x)$ is the continuous process on $[0,1]$ with 
\begin{align*}
\mathcal{W}_a(x) \stackrel{d}{=} 
\begin{cases}
    \mathcal{N}(0,x-a) & \text{if } x\geq a,\\
0 & \text{otherwise.}                 
\end{cases}
\end{align*}
In other terms, for $A_n = \set{\lceil n^a\rceil, \dots, n}$, the process defined on the left-hand side of (\ref{eq:Bn with restriction}) converges weakly to a Brownian motion started at $x = a$.

\subsection{Restriction to even and odd cycles}
\label{sec:restriction_even_odd}
This subsection is devoted to the asymptotic behaviour of the processes
\begin{align}
 B_n^{(ev)}(x):= \sum_{\substack{1 \leq m \leq n^{x}\\m \text{ even}}} C_m 
 \quad \text{ and } \quad
 B_n^{(odd)}(x):= \sum_{\substack{1 \leq m \leq n^{x}\\m \text{ odd}}} C_m. 
\end{align}
For simplicity we assume that we have no restrictions to the cycle lengths, that is $A_n =\set{1,\dots,n}$ in Definition~\ref{def_A-weighted_probabililty_measure}. As for the process $B_n(.)$ in Subsection~\ref{sec:funct_limit_without_restrictions}, we will find an appropriate rescaling for $B_n^{(ev)}(.)$ and $B_n^{(odd)}(.)$ in order to prove joint convergence to the Brownian motion, see Theorem~\ref{thm:ev_odd_BM}.

First, we need to compute the characteristic function. For $0 \leq x_1,x_2 \leq 1$ we have
\begin{align}
\label{eq:char_even_odd}
&h_n \ET{\exp\left(is_1 B_n^{(ev)}(x_1) +is_2 B_n^{(odd)}(x_2) \right)}\\
=
&\nth{\exp\left( g_\Theta(t) + (e^{is_1}-1) L_{D_n^{(ev)}}(t) +(e^{is_2}-1) L_{D_n^{(odd)}}(t) \right)  }\nonumber
\end{align}
with $D_n^{(ev)} = \set{m \leq n^{x_1}| m \text{ even}}$ and $D_n^{(odd)} = \set{m \leq n^{x_1}| m \text{ odd}}$.
This is proven by our usual argumentation. We now can apply Theorem~\ref{thm:total_cycles_asymp_with_restriction} for $0 \leq x_1,x_2 < 1$ and get
\begin{align*}
 &\ET{\exp\left(is_1 B_n^{(ev)}(x_1) +is_2 B_n^{(odd)}(x_2) \right)} \\
&= 
\exp\left((e^{is_1}-1)L_{D_n^{(ev)}}(r)\right) \exp\left((e^{is_2}-1)L_{D_n^{(odd)}}(r)\right) \left( 1 + O\left(\frac{1}{n}\right)\right).
\end{align*}
With \eqref{eq:theta_k_to_vartheta} follows that
\begin{align}
 L_{D_n^{(ev)}}(r) =  x_1 \frac{\vartheta}{2} \log n + O(1)
\ \ \text{ and } \ \
 L_{D_n^{(odd)}}(r) =  x_2 \frac{\vartheta}{2} \log n + O(1)
\end{align}
and therefore we define the rescaled processes
\begin{align}
 \widetilde{B}_n^{(ev)} (x) :=  \frac{B_n^{(ev)}(x) - x \frac{\vartheta}{2} \log n}{\sqrt{\frac{\vartheta}{2}  \log n}}
\text{ and }
\widetilde{B}_n^{(odd)} (x) :=  \frac{B_n^{(odd)}(x) - x \frac{\vartheta}{2} \log n}{\sqrt{\frac{\vartheta}{2}  \log n}}.
\end{align}
Our aim is to prove the following theorem.
\begin{theorem}\label{thm:ev_odd_BM}
 The processes $\widetilde{B}_n^{(ev)} (x)$ and $\widetilde{B}_n^{(odd)} (x)$ converge, as 
$n \rightarrow \infty$, to two independent standard Brownian motions for $0 \leq x \leq 1$.
\end{theorem}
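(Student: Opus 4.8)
The plan is to mimic, essentially verbatim, the proof of the unrestricted functional central limit theorem in Subsection~\ref{sec:funct_limit_without_restrictions}, the only structural change being that the single family of index blocks is replaced by two \emph{interleaved} families — one supported on the even integers and one on the odd integers. The independence of the two limiting Brownian motions will then come out automatically, because these two families are pairwise disjoint as sets of integers and hence the relevant characteristic function factorizes. As a preliminary reduction I would introduce the recentered processes
\begin{align*}
B_n^{*,(ev)}(x) := \frac{B_n^{(ev)}(x) - L_{D_n^{(ev)}}(r)}{\sqrt{\tfrac{\vartheta}{2}\log n}}
\quad\text{and}\quad
B_n^{*,(odd)}(x) := \frac{B_n^{(odd)}(x) - L_{D_n^{(odd)}}(r)}{\sqrt{\tfrac{\vartheta}{2}\log n}},
\end{align*}
and observe, exactly as in \eqref{eq:L_x}, that $|L_{D_n^{(ev)}}(r) - \tfrac{x\vartheta}{2}\log n| = O(1)$ and likewise for the odd part, uniformly in $x\in[0,1]$; thus $\widetilde{B}_n^{(ev)} - B_n^{*,(ev)}$ and $\widetilde{B}_n^{(odd)} - B_n^{*,(odd)}$ vanish uniformly on $[0,1]$ and it suffices to prove that $(B_n^{*,(ev)}, B_n^{*,(odd)})$ converges weakly to a pair of independent standard Brownian motions, which I would do via finite-dimensional distributions plus tightness.

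For the finite-dimensional distributions, fix $0 = x_0 < x_1 < \dots < x_p \le 1$ and $0 = y_0 < y_1 < \dots < y_q \le 1$ and set $E_n^{(i)} := \{m \text{ even} : \lfloor n^{x_{i-1}}\rfloor < m \le \lfloor n^{x_i}\rfloor\}$ and $O_n^{(j)} := \{m \text{ odd} : \lfloor n^{y_{j-1}}\rfloor < m \le \lfloor n^{y_j}\rfloor\}$. These blocks are pairwise disjoint, and generalizing \eqref{eq:char_even_odd} by a direct application of Lemma~\ref{lem:cycle_index_theorem} shows that $h_n$ times the joint characteristic function of the increments of $B_n^{(ev)}$ along the $x_i$ and of $B_n^{(odd)}$ along the $y_j$ equals the $n$-th coefficient of $\exp\bigl(g_\Theta(t) + \sum_i (e^{is_i}-1)L_{E_n^{(i)}}(t) + \sum_j (e^{it_j}-1)L_{O_n^{(j)}}(t)\bigr)$. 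Since every block has maximal element $o(n)$, assumption \eqref{eq:assumption_on_dn} holds and Theorem~\ref{thm:total_cycles_asymp_with_restriction} applies with $k = p+q$; the top block (when $x_p = 1$ or $y_q = 1$) is handled exactly as in Subsection~\ref{sec:funct_limit_without_restrictions}, by replacing the offending $L$ by the even (resp.\ odd) part of $g_\Theta$ minus the complementary finite $L$, the $t^{n+1}$-tail being negligible. The theorem then produces the factorization of the characteristic function into $\prod_i \exp\bigl((e^{is_i}-1)L_{E_n^{(i)}}(r)\bigr)\prod_j \exp\bigl((e^{it_j}-1)L_{O_n^{(j)}}(r)\bigr)(1+o(1))$, and \eqref{eq:theta_k_to_vartheta} yields $L_{E_n^{(i)}}(r) = (x_i - x_{i-1})\tfrac{\vartheta}{2}\log n + O(1)$ and $L_{O_n^{(j)}}(r) = (y_j - y_{j-1})\tfrac{\vartheta}{2}\log n + O(1)$. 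Rescaling $s_i, t_j$ by $(\tfrac{\vartheta}{2}\log n)^{-1/2}$ and expanding $e^{is}-1$ to second order as in the proof of Corollary~\ref{cor_cltBn}, the limiting characteristic function becomes $\prod_i e^{-s_i^2(x_i-x_{i-1})/2}\prod_j e^{-t_j^2(y_j-y_{j-1})/2}$ — that of two independent Gaussian processes with independent increments of the prescribed variances, i.e.\ of two independent standard Brownian motions.

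For tightness it is enough to prove that $B_n^{*,(ev)}$ and $B_n^{*,(odd)}$ are separately tight in $D[0,1]$, since then the pair is tight in $D([0,1];\R^2)$; and this I would do exactly as in Subsection~\ref{sec:funct_limit_without_restrictions}. Using the extended cycle-index identity \eqref{eq_extended_cycle_index} on the Poisson product space (Hansen's argument), one checks that for $0 \le x_1 < x < x_2 \le 1$ the quantity $h_n\,\ET{\bigl(B_n^{*,(ev)}(x)-B_n^{*,(ev)}(x_1)\bigr)^2\bigl(B_n^{*,(ev)}(x_2)-B_n^{*,(ev)}(x)\bigr)^2}$ equals $(\tfrac{\vartheta}{2}\log n)^{-2}[t^n]\bigl[L_{D_1}(t)L_{D_2}(t)e^{g_\Theta(t)}\bigr]$, with $D_1, D_2$ the even integers in $(\lfloor n^{x_1}\rfloor,\lfloor n^x\rfloor]$ and $(\lfloor n^x\rfloor,\lfloor n^{x_2}\rfloor]$ (the disjointness of $D_1$ and $D_2$ is what kills the cross term in the fourth moment). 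Corollary~\ref{prop:tn_of_addintional_poly} together with Corollary~\ref{cor:haviour_of_hn} bounds this by $O\bigl(L_{D_1}(r)L_{D_2}(r)/(\log n)^2\bigr)$, and \eqref{eq:theta_k_to_vartheta} gives $L_{D_1}(r) = O((x-x_1)\log n)$ and $L_{D_2}(r) = O((x_2-x)\log n)$, so the moment criterion \cite[Theorem~15.6]{Bi99} (with exponent $2$) yields tightness; the odd process is identical. Combining this with the finite-dimensional convergence and the preliminary reduction completes the proof. The part I expect to require the most care is the independence of the two limits — though it is not really a separate argument but a consequence of the disjointness of the even and odd index blocks forcing the characteristic function to split — together with the boundary case $x_p = 1$ or $y_q = 1$, which is the only place where the small manipulation borrowed from Subsection~\ref{sec:funct_limit_without_restrictions} is genuinely needed.
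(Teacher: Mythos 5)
Your overall architecture (recentering, finite-dimensional distributions via disjoint index blocks, tightness via the fourth-moment bound) matches the paper's, but there is a genuine gap at exactly the point you dismiss as a ``small manipulation borrowed from Subsection~\ref{sec:funct_limit_without_restrictions}'': the boundary case $x_p=1$ (or $y_q=1$). In the unrestricted setting, substituting $L_{D_n^{(k)}}(t)=g_\Theta(t)-L_{D_c}(t)+t^{n+1}f_n(t)$ only changes the coefficient of $g_\Theta(t)$ in the exponent, so the generating function still has its unique singularity at $t=r$ and Theorem~\ref{thm:total_cycles_asymp_with_restriction} applies. In the even/odd setting this is no longer true: the even part of $g_\Theta$ is $\tfrac12\bigl(g_\Theta(t)+g_\Theta(-t)\bigr)$ and the odd part is $\tfrac12\bigl(g_\Theta(t)-g_\Theta(-t)\bigr)$, so the substitution forces $g_\Theta(-t)$ into the exponent, as in \eqref{eq:even_gen_at_one} and \eqref{eq:odd_gen_at_one}. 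Since $g_\Theta(-t)$ has a logarithmic singularity at $t=-r$, the integrand now has \emph{two} singularities on the circle $|t|=r$, and the single Hankel-type contour underlying Theorem~\ref{thm:total_cycles_asymp_with_restriction} cannot be used. The paper explicitly notes this and supplies a separate two-singularity transfer result, Theorem~\ref{thm:total_cycles_asymp_with_restriction_2}, proved with the doubly notched contour of Figure~\ref{fig:curve_for_two_sing}; the second singularity contributes an additional error term $O\bigl(n^{\max\{\Re(w_2),0\}-1}\bigr)$, and one must also restrict the arguments (e.g.\ $|s_1|,|s_2|\le\pi/4$) to control the real parts of the exponents $w_1=\tfrac{e^{is_1}+e^{is_2}}{2}$ and $w_2=\tfrac{e^{is_1}-e^{is_2}}{2}$. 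Your proposal, as written, would invoke the one-singularity theorem on a function to which it does not apply.

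The rest of your argument is sound and coincides with the paper's: for interior points $x<1$ the blocks have maximal element $O(n^{\max x_i\vee y_j})=o(n)$, so \eqref{eq:assumption_on_dn} holds and the characteristic function factorizes because the even and odd blocks are disjoint; the normalization $L_{E_n^{(i)}}(r)=(x_i-x_{i-1})\tfrac{\vartheta}{2}\log n+O(1)$ is correct since only about half the integers in each block contribute; and the tightness argument carries over verbatim. To close the gap you must either prove the two-singularity analogue of the transfer theorem or cite it, since without it the joint convergence at $x=1$ --- and hence convergence of the processes on all of $[0,1]$ --- is not established.
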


\begin{proof}
Using the same argumentation as in the proof of Corollary~\ref{cor_cltBn}, we see that for $0 \leq x_1,x_2 < 1$
\begin{align}
 \left(\widetilde{B}_n^{(odd)} (x_1),\widetilde{B}_n^{(odd)} (x_2)   \right)
\stackrel{d}{\longrightarrow} \left(\mathcal{N}_1,\mathcal{N}_2 \right),
\end{align}
where $\mathcal{N}_1$ and $\mathcal{N}_2$ are independent centered Gaussian random variables with variance $x_1, x_2$, respectively. More interesting and difficult is the behaviour for $x_1=1$ and/or $x_2=1$.
We have for $x_1=1$ 
\begin{align*}
 L_{D_n^{(ev)}}(t) = \sum_{\substack{1 \leq m \leq n\\m \text{ even}}} \frac{\theta_m}{m} t^m 
=\frac{1}{2} g_\Theta(t) +  \frac{1}{2} g_\Theta(-t) -t^{n+1} f_n(t) 
\end{align*}
where $f_n(t)$ is a holomorphic function around $0$.
We thus get
\begin{align}
\label{eq:even_gen_at_one}
h_n \ET{\exp\left(is_1 B_n^{(ev)}(1) \right)}
=
\nth{\exp\left( g_\Theta(t) + \frac{e^{is_1}-1}{2}\left(g_\Theta(t) + g_\Theta(-t)\right)  \right)  }.
\end{align}
By an analogue argumentation for $x_1=x_2=1$ we obtain
\begin{align}
\label{eq:odd_gen_at_one}
&h_n \ET{\exp\left(is_1 B_n^{(ev)}(1) + is_2 B_n^{(odd)}(1) \right)}\nonumber\\
=
&\nth{\exp\left(  g_\Theta(t) + \frac{e^{is_1}-1}{2}(g_\Theta(t) + g_\Theta(-t)) + \frac{e^{is_2}-1}{2}(g_\Theta(t) - g_\Theta(-t))  \right)  }\nonumber\\
=
&\nth{\exp\left( \frac{e^{is_1} +e^{is_2}}{2} g_\Theta(t) + \frac{e^{is_1} -e^{is_2}}{2} g_\Theta(-t)   \right)  }.
\end{align}
We cannot apply Theorem~\ref{thm:total_cycles_asymp_with_restriction} for \eqref{eq:even_gen_at_one} and \eqref{eq:odd_gen_at_one}  since 
the functions in this equations have singularities at the points $r$ and $-r$.
However, a modification of Theorem~\ref{thm:total_cycles_asymp_with_restriction} applies to this situation, one simply has to replace the curve $\gamma$ in Figure~\ref{fig:curve_flajolet_4} by the curve in Figure~\ref{fig:curve_for_two_sing}. 

\begin{figure}[h]
 \centering
 \includegraphics[width=0.35 \textwidth]{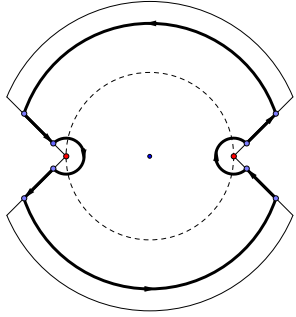}
 \caption{The curve $\gamma$ for two singularities}
 \label{fig:curve_for_two_sing}
\end{figure}

This gives Theorem~\ref{thm:total_cycles_asymp_with_restriction_2}, see below, which we apply for (\ref{eq:odd_gen_at_one}) if $|s_1|, |s_2| \leq \pi/4$ and obtain
\begin{align*}
\ET{\exp\left(is_1 B_n^{(ev)}(1) + is_2 B_n^{(odd)}(1) \right)}
=n^{\left(\frac{e^{is_1}+e^{is_2}}{2}-1\right)\vartheta} \left( F(s_1,s_2) + \frac{\bar{d}_n}{n}\right)
\end{align*}
with $F(s_1,s_2)$ a holomorphic function in a neighbourhood of the origin with $F(0,0)=1$. Then, the same computation as in the proof of Corollary~\ref{cor_cltBn} shows that 
\begin{align}
 \left(\widetilde{B}_n^{(odd)} (1),\widetilde{B}_n^{(odd)} (1)   \right)
\stackrel{d}{\longrightarrow} \left(\mathcal{N}_1,\mathcal{N}_2 \right)
\end{align}
with $\mathcal{N}_1,\mathcal{N}_2$ two independent standard Gaussian random variables.

Finally, it remains to prove that the increments of $\widetilde{B}_n^{(ev)} (.)$ and $\widetilde{B}_n^{(odd)} (.)$ are independent and the tightness of both processes.
This argumentations are completely similar to those in Section~\ref{sec:funct_limit_without_restrictions}, see \eqref{eq:char_increments} and \eqref{eq:joint_second_moment}, and we thus omit them. 
\end{proof}

\begin{theorem}
\label{thm:total_cycles_asymp_with_restriction_2}
Let $g(t)$ in $\mathcal{F}(r,\vartheta, K)$ be given and $D_n^{(j)}, d_n^{(j)}$ and $\bar{d}_n$ be as in Theorem~\ref{thm:total_cycles_asymp_with_restriction}. We define 
\begin{align*}
G_n(t,w_1,w_2,v_1,...,v_k) := \exp \left( w_1 g(t) +w_2 g(-t)+ \sum_{j=1}^k{v_j L_{D_n^{(j)}}(t)}\right) 
\end{align*}
with $w_1,w_2, v_1,\dots,v_k\in\C$ and $L_{D_n^{(j)}}(t)$ as in Lemma~\ref{lem:generating_hn_A}. Suppose further that $\bar{d}_n$ fulfils the assumption \eqref{eq:assumption_on_dn}
We then have for each $b\in\N$ fix  
\begin{align}
\label{eq_asymp_Gn_2}
&[t^{n-b}]\left[ G_n(t,w_1,w_2,v_1,...,v_k) \right] \\
=& 
\frac{e^{Kw_1 } n^{w_1 \vartheta -1}e^{w_2 g(-r)}}{r^{n-b}}   
\exp\Bigg( \sum_{j=1}^k{v_j L_{D_n^{(j)}}(r)} \Bigg)  
\Bigg(\frac{1}{\Gamma(w_1 \vartheta)} +  O\bigg(\frac{\bar{d}_n}{n} \bigg) \Bigg) \nonumber\\
&+ O\left( n^{\max\{\Re(w_2),0\}-1} \right)\nonumber
\end{align}
uniformly for bounded $|w_1|,|w_2|, |v_1| ,..., |v_k|$ and $\Re(w_1)\geq 0$.
\end{theorem}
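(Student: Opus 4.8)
The plan is to adapt the proof of Theorem~\ref{thm:total_cycles_asymp_with_restriction} essentially verbatim, with the single modification already announced in the statement: replace the contour $\gamma$ of Figure~\ref{fig:curve_flajolet_4} by the two-lobe contour of Figure~\ref{fig:curve_for_two_sing}, which is a Hankel-type contour wrapping the singularity at $r$ together with a second piece accounting for the behaviour near $-r$. First I would apply Cauchy's formula to write
\begin{align*}
[t^{n-b}]\bigl[ G_n(t,w_1,w_2,v_1,\dots,v_k)\bigr]
=
\frac{1}{2\pi i}\int_{\gamma} \exp\Bigl( w_1 g(t) + w_2 g(-t) + \sum_{j=1}^k v_j L_{D_n^{(j)}}(t)\Bigr)\,\frac{dt}{t^{n-b+1}}.
\end{align*}
Since $g(t)\in\mathcal{F}(r,\vartheta,K)$, the function $t\mapsto g(-t)$ is holomorphic in a slit disc around $-r$ of the mirrored shape, and $g(-t)$ has the logarithmic singularity $\vartheta\log\frac{1}{1+t/r} + K + O(t+r)$ as $t\to -r$. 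The contour is chosen to be a large near-circle of radius $R(n) = \min\{r(1+1/\bar d_n), R\}$ with two indentations: one small Hankel loop around $r$ (exactly as the $\gamma_2,\gamma_3,\gamma_4$ of the original proof) and a second indentation near $-r$.

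Next I would split the integral into the contributions of these pieces. The contribution of the Hankel loop around $r$ is handled exactly as in Theorem~\ref{thm:total_cycles_asymp_with_restriction}: near $r$ the integrand factors as $e^{w_1 g(t)}$ times the smooth factor $e^{w_2 g(-t) + \sum_j v_j L_{D_n^{(j)}}(t)}$, and the smooth factor is continuous at $r$ with value $e^{w_2 g(-r)}\exp(\sum_j v_j L_{D_n^{(j)}}(r))$ up to $O(\bar d_n/n)$ errors (using \eqref{eq_integral_gamma_2_a} for the $L_{D_n^{(j)}}$ factors and a Taylor estimate for $g(-t)$, which is smooth there); the singular factor $e^{w_1 g(t)}$ produces, after the substitution $z = -n(1-t/r)$ and the standard Hankel-integral evaluation of $1/\Gamma$, the term
\begin{align*}
\frac{e^{Kw_1}n^{w_1\vartheta-1}e^{w_2 g(-r)}}{r^{n-b}}\exp\Bigl(\sum_{j=1}^k v_j L_{D_n^{(j)}}(r)\Bigr)\Bigl(\frac{1}{\Gamma(w_1\vartheta)} + O\bigl(\tfrac{\bar d_n}{n}\bigr)\Bigr).
\end{align*}
The large-circle arc is bounded precisely as in the $\sup_n \bar d_n = \infty$ case of the original proof: $\Re(w_1 g(t)) \le |\Re(w_1)|\vartheta\log\bar d_n + O(1)$, $|L_{D_n^{(j)}}(t)| \le \log\bar d_n + O(1)$ on $|t| = R(n)$, and crucially $\Re(w_2 g(-t)) = \Re(w_2)\,\vartheta\log|1+t/r| + O(1)$, which on $|t|=R(n)$ is bounded by $O(1)$ when $\Re(w_2)\le 0$ and by $|w_2|\vartheta\log 2 + O(1) = O(1)$ when $\Re(w_2)>0$ (there is no singularity of $g(-t)$ on this arc since $-t$ stays bounded away from $r$); combined with $R(n)^{-n} \le r^{-n}e^{-n/(2\bar d_n)}$ and assumption \eqref{eq:assumption_on_dn}, this arc is negligible compared with the main term, just as before.

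Finally, the new ingredient is the indentation near $-r$. There I would factor the integrand as $e^{w_2 g(-t)}$ times the factor $e^{w_1 g(t) + \sum_j v_j L_{D_n^{(j)}}(t)}$, which is now the smooth one (it is continuous and bounded near $-r$). By the same Hankel-integral analysis applied to $g(-t)\sim \vartheta\log\frac{1}{1+t/r}$, this contributes a term of order $n^{\Re(w_2)\vartheta - 1}/r^{n}$ times bounded factors; since $L_{D_n^{(j)}}(\pm r) = O(\log n)$ and the constant from the other factors is bounded, this is $O(n^{\max\{\Re(w_2),0\}-1})$ — possibly larger than $n^{\Re(w_1)\vartheta-1}$ when $\Re(w_2)>\Re(w_1)\vartheta$, which is exactly why the statement carries the separate error term $O(n^{\max\{\Re(w_2),0\}-1})$ rather than absorbing it. The main obstacle, and the only place real care is needed, is bookkeeping the uniformity: one must check that all the implied constants in the large-circle bound, the two Hankel evaluations, and the $O(\bar d_n/n)$ corrections are uniform for $|w_1|,|w_2|,|v_j|$ bounded and $\Re(w_1)\ge 0$ (the latter being needed so that $n^{w_1\vartheta-1}$ does not dominate $n^{\max\{\Re(w_2),0\}-1}$ in a way that breaks the claimed form, and so that $1/\Gamma(w_1\vartheta)$ stays well-behaved). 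All of this is routine given the machinery already set up, so I would state it as "the computations are completely similar to those in the proof of Theorem~\ref{thm:total_cycles_asymp_with_restriction}, the only difference being the extra lobe around $-r$" and present only the large-circle estimate and the $-r$ Hankel contribution in detail.
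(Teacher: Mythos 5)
Your approach coincides with the paper's: the authors give no separate proof of Theorem~\ref{thm:total_cycles_asymp_with_restriction_2}, justifying it only by the remark (in the proof of Theorem~\ref{thm:ev_odd_BM}) that one replaces the contour of Figure~\ref{fig:curve_flajolet_4} by the two-lobe contour of Figure~\ref{fig:curve_for_two_sing} and repeats the analysis of Theorem~\ref{thm:total_cycles_asymp_with_restriction}, which is exactly what you carry out. One small slip worth correcting: on the large arc the point $-t$ comes within distance $r/\bar{d}_n$ of $r$ (where the arc meets the lobe around $-r$), so $\Re(w_2\, g(-t))$ is only $O(\log \bar{d}_n)$ rather than $O(1)$ --- but this extra power of $\bar{d}_n$ is still absorbed by $\exp(-n/(2\bar{d}_n))$ under assumption \eqref{eq:assumption_on_dn}, so your conclusion stands.
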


\bibliography{literatur}
\bibliographystyle{abbrv}

\end{document}